\documentclass[a4paper]{article}
\usepackage[scale=0.8]{geometry}
\usepackage{amsmath}
\usepackage{titlesec}
\usepackage{amsthm}
\usepackage{amssymb}
\usepackage{array}
\usepackage{cite}
\usepackage{color}
\usepackage{mathrsfs}
\usepackage{graphicx}
\usepackage{subfigure}
\usepackage[version=4]{mhchem}
\usepackage{textcomp}
\usepackage[all]{xy}
\usepackage{tikz-cd}
\usepackage{enumerate}
\usepackage{float}
\usepackage{ulem}
\usepackage[colorlinks, linkcolor=blue, citecolor=red]{hyperref}
\normalem
\theoremstyle{definition}
\newtheorem{theorem}{Theorem}[section]
\newtheorem{proposition}{Proposition}[section]
\newtheorem{lemma}{Lemma}[section]
\newtheorem{corollary}{Corollary}[section]

\newtheorem{definition}{Definition}[section]
\newtheorem{example}{Example}[section]
\newtheorem{observation}{Observation}[section]
\newtheorem{main}{Main Theorem}
\theoremstyle{remark}
\newtheorem{remark}{Remark}[section]
\tikzcdset{scale cd/.style={every label/.append style={scale=#1},
    cells={nodes={scale=#1}}}}

\newcommand{\n}{\mathfrak{n}}
\newcommand{\g}{\mathfrak{g}}
\newcommand{\C}{\mathbb{C}}
\newcommand{\Hom}{\operatorname{Hom}}
\newcommand{\End}{\operatorname{End}}
\newcommand{\Z}{\mathbb{Z}}
\newcommand{\id}{\operatorname{id}}
\newcommand{\Ind}{\operatorname{Ind}}

\newcommand{\h}{\mathfrak{h}}
\newcommand{\Res}{\operatorname{Res}}
\newcommand{\p}{\mathfrak{p}}
\renewcommand{\l}{\mathfrak{l}}
\newcommand{\Ext}{\operatorname{Ext}}
\renewcommand{\O}{\mathcal{O}}
\renewcommand{\u}{\mathfrak{u}}
\newcommand{\im}{\operatorname{im}}
\newcommand{\supp}{\operatorname{supp}}
\newcommand{\s}{\mathfrak{s}}
\newcommand{\z}{\mathfrak{z}}
\newcommand{\incl}{\hookrightarrow}
\newcommand{\proj}{\twoheadrightarrow}
\renewcommand{\sl}{\mathfrak{sl}}
\newcommand{\Ann}{\operatorname{Ann}}
\newcommand{\coker}{\operatorname{coker}}
\newcommand{\I}{\mathbf{I}}
\newcommand{\R}{\mathbf{R}}

\newcommand{\J}{\mathbb{J}}

\renewcommand{\H}{\operatorname{H}}

\newcommand{\Mod}{\text{-}\mathsf{Mod}}
\newcommand{\wtMod}{\text{-}\mathsf{wtMod}}

\renewcommand{\J}{\operatorname{J}}
\renewcommand{\a}{\mathfrak{a}}
\newcommand{\fin}{\text{fin}}
\newcommand{\Oblv}{\operatorname{Oblv}}
\renewcommand{\H}{\operatorname{H}}

\begin{document}
\title{On the minimal parabolic induction}
\date{\today}
\author{Xinyu Li}

\maketitle

\begin{abstract}
    Motivated by Beilinson--Bernstein's proof of the Jantzen conjectures \cite{beilinson1993proof}, we define the minimal parabolic induction functor for Kac--Moody algebras, and establish some basic properties.
    
    As applications of the formal theory, we examine first extension groups between simple highest weight modules in the category of weight modules, and analyze the annihilators of some simple highest weight modules.
\end{abstract}

\tableofcontents

\setcounter{section}{-1}

\section{Introduction}

\subsection{Motivation}

Let $\g$ be a Kac--Moody algebra. For two weights $\lambda,\mu$ of $\g$, it is a basic problem in representation theory to calculate the first extension groups between $L(\lambda)$ and $L(\mu)$ in the category of $\g$-weight modules (or the BGG category $\O$). In particular, such problems arise in recent studies on simple affine vertex algebras (for instance, \cite{kawasetsu2022relaxed} and \cite{arakawa2023weight}).

However, the explicit result is not easy to calculate in general. Even for $\g$ being a finite dimensional semisimple Lie algebra, the answer depends on the Jantzen conjecture (cf. \cite{humphreys2008representations} Chapter 8.15.), which is a deep result in representation theory. In the finite case, the Jantzen conjecture was proved by Beilinson--Bernstein \cite{beilinson1993proof}, using their localisation theorem \cite{beilinson1981localisation}, the weight filtration of $\ell$-adic mixed perverse sheaves \cite{beilinson1982faisceaux}, and the monodromy weight filtration of nearby cycles \cite{deligne1980conjecture}.

We do not have a Beilinson--Bernstein type localisation theorem for a general Kac--Moody algebra $\g$. Therefore, some more tools need to be developed to calculate the extension groups. Instead of trying to obtain a direct formula, our idea is to reduce the calculation to some Levi subalgebra (which is usually of finite type, and hence we can apply the known results for finite dimensional reductive Lie algebras). This leads us to the construction of \textit{minimal parabolic induction}.

Let us briefly describe our main constructions and main results below.

\subsection{Main results}

Let $\g$ be a Kac--Moody algebra with a fixed choice $\Pi$ of simple roots. For any subset $\Xi\subset\Pi$, we have the corresponding standard (resp. opposite) parabolic subalgebra $\p^+_\Xi$ (resp. $\p^-_\Xi$), and the associated Levi subalgebra $\l_\Xi$.

The parabolic induction functor $$\Ind_{\Xi,!}\colon\l_\Xi\Mod\to\g\Mod,N\mapsto U\g\otimes_{U\p^+_\Xi} N,$$
which maps $\l_\Xi$-Verma modules to $\g$-Verma modules, is intensively studied in representation theory. Here we inflate an $\l_\Xi$-module $N$ to a $\p^+_\Xi$-module through the projection $\p^+_\Xi\proj\l_\Xi$. Equally important is the parabolic coinduction functor
$$\Ind_{\Xi,*}\colon\l_\Xi\Mod\to\g\Mod,N\mapsto\Hom_{U\p^-_\Xi}(U\g,N),$$
which maps completed $\l_\Xi$-coVerma modules to completed $\g$-coVerma modules. Here we inflate an $\l_\Xi$-module $N$ to a $\p^-_\Xi$-module through the projection $\p^-_\Xi\proj\l_\Xi$.

In the other direction, there is the parabolic restriction functor
$$\Res_\Xi^!\colon\g\Mod\to\l_\Xi\Mod,M\mapsto \Hom_{U\u^+_\Xi}(\C,M),$$
and the parabolic corestriction functor
$$\Res_\Xi^*\colon\g\Mod\to\l_\Xi\Mod,M\mapsto \C\otimes_{U\u^-_\Xi}M,$$
where $\u^\pm_\Xi$ is the nilradical of $\p^\pm_\Xi$. We have adjoint pairs of functors $(\Ind_{\Xi,!},\Res_\Xi^!),(\Res_\Xi^*,\Ind_{\Xi,*})$. Moreover,
$$\Res_\Xi^!\circ\Ind_{\Xi,*}=\Res_\Xi^*\circ\Ind_{\Xi,!}=\id.$$
This induces a natural transformation (see Definition~\ref{minimal-pinduction})
$$\Ind_{\Xi,!}\to\Ind_{\Xi,*}.$$

Let us denote by $\Ind_{\Xi,!*}$ the image of the above transformation. We call $\Ind_{\Xi,!*}$ the \textit{minimal parabolic induction}, or the \textit{intermediate parabolic induction}.

In this paper, we establish some basic properties of minimal parabolic induction. For example, like the construction of IC sheaves, one may expect that $\Ind_{\Xi,!*}$ sends ``good'' (lisse) simple objects to simple objects. We confirm this expectation in Proposition~\ref{ind-simple} by showing that $\Ind_{\Xi,!*}$ maps a simple weight module to a simple weight module (see Section~\ref{weight-module} for the definition of weight modules and the category $\g\wtMod$).

\begin{main}
    For any simple $\l_\Xi$-weight module $N$, $\Ind_{\Xi,!*}(N)$ is a simple $\g$-weight module.
\end{main}

We exhibit two applications of the general theory in Section~\ref{applications}. First, we explore the behavior of first extension groups between some simple highest weight modules under minimal parabolic induction. Under some rather technical conditions on two weights $\lambda,\mu$, we are able to prove that there is an isomorphism (Proposition~\ref{ind-ext})
$$\Ext^1_{\g\wtMod}(L(\lambda),L(\mu))=\Ext^1_{\l_\Xi\wtMod}(L_\Xi(\lambda),L_\Xi(\mu)).$$

\begin{main}
    Let $\mu,\lambda$ be two weights of $\g$ such that $\mu-\lambda\in\Z\Xi$, then we always have an inclusion
    $$\Ext^1_{\g\wtMod}(L(\lambda),L(\mu))\incl\Ext^1_{\l_\Xi\wtMod}(L_\Xi(\lambda),L_\Xi(\mu)).$$
    Moreover, suppose that $\mu-\lambda\notin\Z_{\geq0}\Xi$ and $\lambda$ is $\Xi$-joyful (a technical notion introduced in Definition~\ref{joyful}), then the above inclusion is an isomorphism.
\end{main}

Another application is about annihilators. Assume $\l_\Xi$ is a finite dimensional reductive Lie algebra, then we are able to prove that
$$\Ann_{U\g}(\Ind_{\Xi,!*}(M_\Xi(\lambda)))=\Ann_{U\g}(\Ind_{\Xi,!*}(M_\Xi(w\cdot\lambda)))$$
for $\lambda$ being a $\rho$-anti-dominant integral weight for $\l_\Xi$ and $w\in W_\Xi$, the Weyl group of $\l_\Xi$. As a corollary, we show that under the same assumption, $\Ann_{U\g}(L(\lambda))\subset\Ann_{U\g}(L(w\cdot\lambda))$.

\begin{main}
    Suppose $\Xi$ is of finite type, i.e. $\l_\Xi$ is a finite dimensional reductive Lie algebra. Let $\lambda\in\h^*$ be a weight such that $\langle\lambda+\rho,\alpha^\vee\rangle\in\Z_{\leq0}$ for any $\alpha^\vee\in\Xi^\vee$, then we have
    $$\Ann_{U\g}(L(\lambda))=\Ann_{U\g}(\Ind_{\Xi,!*}(M_\Xi(w\cdot\lambda)))\subset\Ann_{U\g}(L(w\cdot\lambda))$$
    for any $w\in W_\Xi$, the Weyl group of $\l_\Xi$.
\end{main}

To the knowledge of the author, this result is new for arbitrary Kac--Moody algebra.\footnote{When $\g$ is a finite dimensional semisimple Lie algebra, this result was proved by Vogan \cite{vogan1980ordering}. When $\g$ is an affine Kac--Moody algebra, this result was claimed by Dhillon, based on his work \cite{campbell2021affine}. Both of them used Harish-Chandra bimodules, whose general theory has not been developed for arbitrary Kac--Moody algebras yet.}

\subsection{Convention}

Throughout the paper, we work over an algebraically closed field $\C$ of characteristic $0$. Symbols $\otimes$ and $\Hom$ without subscripts mean the corresponding operations in the category of $\C$-vector spaces, that is, $\otimes_\C$ and $\Hom_\C$.

For a ring (or a Lie algebra) $\Lambda$, we use $\Lambda\Mod$ to denote the category of left $\Lambda$-modules. By a module, we mean a left module.

\subsection{Acknowledgement}

The author would like to express his deepest gratitude to his advisors, Prof. Wenbin Yan and Prof. Peng Shan, without whose help the current work would have never been done. He especially thanks Peng Shan for pointing out many inaccuracies, due to the author's arrogance and over-optimism, in an earlier draft of the paper.

There is a similar construction in the theory of vertex algebras, namely the Zhu's induction. The author thanks Tomoyuki Arakawa, Thomas Creutzig, and Yongchang Zhu for correspondences on this subject.

The author thanks Gurbir Dhillon for teaching him parabolic induction in categorical representation theory, Dingxin Zhang for teaching him $\ell$-adic mixed perverse sheaves, and Qixian Zhao for teaching him Duflo's theorem.

\section{Kac--Moody algebra setup}
\subsection{Parabolic type}

Let $\g$ be a Kac--Moody algebra associated with the triple $(\h,\Pi,\Pi^\vee)$, where $\h$ is a fixed Cartan subalgebra, $\Pi\subset\h^*$ (resp. $\Pi^\vee\subset\h$) is the collection of simple roots (resp. coroots). Denoted by $\Delta$ the set of roots of $\g$, $\Delta^+$ (resp. $\Delta^-$) the set of positive (resp. negative) roots of $\g$, we have the root decomposition
$$\g=\h\oplus\bigoplus_{\alpha\in\Delta}\g_\alpha=\n^-\oplus\h\oplus\n^+,$$
where
$$\n^+=\bigoplus_{\alpha\in\Delta^+}\g_\alpha,\n^-=\bigoplus_{\alpha\in\Delta^-}\g_\alpha.$$

Any subset $\Xi$ of $\Pi$ (together with the corresponding subset $\Xi^\vee$ of $\Pi^\vee$) defines a \textit{parabolic type}. More precisely, let $\Delta_\Xi=\Delta\cap\Z\Xi,\Delta^\pm_\Xi=\Delta_\Xi\cap\Delta^\pm$. Then we have the standard (opposite) parabolic subalgebra of type $\Xi$
$$\p^+_\Xi=\bigoplus_{\alpha\in\Delta^-_\Xi}\g_\alpha\oplus\h\oplus\n^+,\p^-_\Xi=\bigoplus_{\alpha\in\Delta^+_\Xi}\g_\alpha\oplus\h\oplus\n^-,$$
the (opposite) nilradical
$$\u^\pm_\Xi=\bigoplus_{\alpha\in\Delta^\pm\backslash\Delta^\pm_\Xi}\g_\alpha,$$
and the Levi
$$\l_\Xi=\h\oplus\bigoplus_{\alpha\in\Delta_\Xi}\g_\alpha.$$

\subsection{Transpose anti-involution}
\label{transpose-anti-involution}

Let us name the simple roots of $\g$ by $\alpha_1,\cdots,\alpha_n$. We fix the Chevalley generators $e_i\in\g_{\alpha_i}$, $f_i\in\g_{-\alpha_i}$. From the structure theory of Kac--Moody algebra (the Serre relations), it is well-known that the map $\tau(e_i)=f_i,\tau(f_i)=e_i$ and $\tau(h)=h$ for $h\in\h$ extends to an anti-involution of $U\g$, which we still denote by $\tau$. This anti-involution $\tau$ is called the \textit{transpose anti-involution}, in the sense that for $\g=\sl_n$ with the standard choice of Chevalley generators, $\tau(x)=x^t$ is the transpose of a matrix.

The transpose anti-involution interchanges $\n^+$ and $\n^-$. More generally, for any parabolic type $\Xi$, $\tau$ interchanges $\p^+_\Xi$ and $\p^-_\Xi$, $\u^+_\Xi$ and $\u^-_\Xi$, and restricts to an anti-involution on $\l_\Xi$.

The transpose anti-involution $\tau$ leads to an isomorphism of algebra $U\g\simeq(U\g)^{\text{op}}$. As a consequence, we can identify the category of left $U\g$-modules with the category of right $U\g$-modules via the twisting of $\tau$. More precisely, for any left $U\g$-module $M$, we can define a right $U\g$-module structure on $M$ by
$$m.X=\tau(X).m,X\in U\g,m\in M.$$
Conversely, for any right $U\g$-module $M$, we can define a left $U\g$-module structure on $M$ by
$$X.m=m.\tau(X),X\in U\g,m\in M.$$

\subsection{Basic representations}

Let $\lambda\in\h^*$ be a weight. We have three basic types of $\g$-modules, that is, the Verma module $M(\lambda)$ of highest weight $\lambda$, the simple module $L(\lambda)$ of highest weight $\lambda$, and the completed coVerma module $M(\lambda)^*$\footnote{The usual coVerma module, or dual Verma module, is $M(\lambda)^\vee$. Here $^\vee$ is the restricted dual introduced in Section~\ref{km-weight}.}. It is known that $M(\lambda)$ has a maximal proper $\g$-submodule, usually denoted by $N(\lambda)$. There is a canonical (up to a nonzero constant) morphism $M(\lambda)\to M(\lambda)^*$ (that is induced by the Shapovalov form on $M(\lambda)$), whose kernel is $N(\lambda)$ and whose image is $M(\lambda)/N(\lambda)\simeq L(\lambda)$.

Similarly, let us denote by $M_\Xi(\lambda),L_\Xi(\lambda)$ and $M_\Xi(\lambda)^*$, respectively, the $\l_\Xi$-Verma module of highest weight $\lambda$, the $\l_\Xi$-simple module of highest weight $\lambda$, and the $\l_\Xi$-completed coVerma module. Let $N_\Xi(\lambda)\subset M_\Xi(\lambda)$ be the maximal proper $\l_\Xi$-submodule of $M_\Xi(\lambda)$, then $M_\Xi(\lambda)/N_\Xi(\lambda)\simeq L_\Xi(\lambda)$.

\subsection{Geometry of weight}

Let $W$ be the Weyl group of $\g$. We fix an element $\rho\in\h^*$ such that $\langle\rho,\alpha^\vee\rangle=1$ for any $\alpha^\vee\in\Pi^\vee$. The \textit{dot action} of $W$ on $\h^*$ is defined by
$$w\cdot\lambda=w(\lambda+\rho)-\rho.$$

For any root $\alpha$ of $\g$, let us denote by $s_\alpha$ the corresponding reflection in $W$. Let $W_\Xi$ be the subgroup of $W$ generated by $s_\alpha$ for $\alpha\in\Xi$. It is the Weyl group of $\l_\Xi$.

For two weights $\mu,\lambda\in\h^*$, we write $\mu\geq\lambda$ if $\mu-\lambda\in\Z_{\geq0}\Pi$.

Let $Q_\Xi$ be the root lattice of $\l_\Xi$. The following observation will be useful when dealing with weight modules in Section~\ref{weight-module}.

\begin{observation}
    \label{weight}
    The root lattice $Q_\Xi$ of $\l_\Xi$ has zero intersection with the monoid spanned by $\Delta^-\backslash\Delta^-_\Xi$.
\end{observation}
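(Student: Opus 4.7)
The plan is to exploit the separation of simple roots into those in $\Xi$ and those in $\Pi\setminus\Xi$ by projecting to the $\C$-span of $\Pi\setminus\Xi$. I will show that the projection kills $Q_\Xi$ but detects every nonzero element of the monoid generated by $\Delta^-\setminus\Delta^-_\Xi$; this rules out any nonzero element in the intersection.

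First I would recall the standard sign-consistency of root coefficients in a Kac--Moody algebra: every $\beta\in\Delta^-$ can be written uniquely as $\beta=\sum_{\alpha_i\in\Pi}c_i(\beta)\alpha_i$ with $c_i(\beta)\in\Z_{\leq 0}$. For $\beta\in\Delta^-\setminus\Delta^-_\Xi$, I claim that $c_i(\beta)<0$ for at least one $\alpha_i\in\Pi\setminus\Xi$; otherwise $\beta$ would lie in $\Z\Xi\cap\Delta=\Delta_\Xi$, contradicting the assumption $\beta\notin\Delta^-_\Xi$.

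Next, consider the $\C$-linear projection $\pi\colon\h^*\to\bigoplus_{\alpha_i\in\Pi\setminus\Xi}\C\alpha_i$ that vanishes on $\Xi$; this is well-defined since the simple roots of a Kac--Moody algebra are linearly independent. By construction $\pi$ vanishes on $Q_\Xi=\Z\Xi$. On the other hand, given a nonzero element $\sum_\beta n_\beta\beta$ of the monoid generated by $\Delta^-\setminus\Delta^-_\Xi$, with $n_\beta\in\Z_{\geq 0}$ and not all $n_\beta$ zero, each $\pi(\beta)$ is a vector with nonpositive coefficients on $\Pi\setminus\Xi$, and for any $\beta$ with $n_\beta>0$ at least one coefficient of $n_\beta\pi(\beta)$ on $\Pi\setminus\Xi$ is strictly negative. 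Since all contributions have the same sign, no cancellation can occur, so $\pi\bigl(\sum_\beta n_\beta\beta\bigr)\neq 0$; in particular the sum is not in $Q_\Xi$.

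There is no real obstacle here; the argument is bookkeeping once one invokes the sign-consistency of root coefficients, and the only subtlety worth flagging is the step $\beta\notin\Delta^-_\Xi\Rightarrow c_i(\beta)<0$ for some $\alpha_i\in\Pi\setminus\Xi$, which is exactly the identity $\Delta_\Xi=\Delta\cap\Z\Xi$ built into the definition of the parabolic type.
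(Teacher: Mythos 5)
Your proof is correct; the paper states this as an Observation with no proof, so you are supplying exactly the standard argument it implicitly relies on. The key points---that every $\beta\in\Delta^-\setminus\Delta^-_\Xi$ has a strictly negative coefficient on some simple root in $\Pi\setminus\Xi$ (by $\Delta_\Xi=\Delta\cap\Z\Xi$ and the sign-consistency of root coefficients), and that these same-sign contributions cannot cancel in a nonnegative combination---are precisely the right bookkeeping, and the projection $\pi$ cleanly separates the conclusion from $Q_\Xi=\Z\Xi$.
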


\section{Minimal parabolic induction}
\label{mpind}

Let us fix a parabolic type $\Xi\subset\Pi$.

\begin{definition}[parabolic restriction]
    There are two types of parabolic restriction functors. The \textit{parabolic $!$-restriction} (invariant) functor is defined by
    $$\Res_\Xi^!\colon\g\Mod\to\l_\Xi\Mod,M\mapsto\Hom_{U\u^+_\Xi}(\C,M),$$
    while the \textit{parabolic $*$-restriction} (coinvariant) functor is defined by
    $$\Res_\Xi^*\colon\g\Mod\to\l_\Xi\Mod,M\mapsto\C\otimes_{U\u^-_\Xi}M.$$
    Here $\C$ means the trivial representation. For a $\g$-module $M$, the $\l_\Xi$-action on $\Res_\Xi^?(M)$ ($?\in\{!,*\}$) is inherited from the $\l_\Xi$-action on $M$. It is well-defined because $[\l_\Xi,\u^\pm_\Xi]\subset\u^\pm_\Xi$.
\end{definition}

\begin{definition}
    There are two types of parabolic induction functors.
    
    The \textit{parabolic $!$-induction} functor is defined by
    $$\Ind_{\Xi,!}\colon\l_\Xi\Mod\to\g\Mod,N\mapsto U\g\otimes_{U\p^+_\Xi}N.$$
    Here we inflate an $\l_\Xi$-module $N$ to a $\p^+_\Xi$-module via the projection $\p^+_\Xi\proj\l_\Xi$.
    
    The \textit{parabolic $*$-induction} (coinduction) functor is defined by
    $$\Ind_{\Xi,*}\colon\l_\Xi\Mod\to\g\Mod,N\mapsto\Hom_{U\p^-_\Xi}(U\g,N).$$
    Here we inflate an $\l_\Xi$-module $N$ to a $\p^-_\Xi$-module via the projection $\p^-_\Xi\proj\l_\Xi$. The left multiplication of $U\g$ and the right multiplication of $U\p^+_\Xi$ make $U\g$ a $(U\g,U\p^+_\Xi)$-bimodule. Then we view $U\g$ as a $(U\p^-_\Xi,U\g)$-bimodule via the transpose anti-involution.
\end{definition}

Here we collect some well-known facts about parabolic restrictions and parabolic inductions.

\begin{proposition}
    \begin{enumerate}
        \item We have adjoint pairs of functors $(\Ind_{\Xi,!},\Res_\Xi^!),(\Res_\Xi^*,\Ind_{\Xi,*})$.
        \item $\Res_\Xi^!\circ\Ind_{\Xi,*}=\Res_\Xi^*\circ\Ind_{\Xi,!}=\id$.
        \item The functor $\Ind_{\Xi,?}$ ($?\in\{!,*\}$) is exact, the functor $\Res_\Xi^!$ (resp. $\Res_\Xi^*$) is left (resp. right) exact.
        \item The $!$-induction $\Ind_{\Xi,!}$ maps an $\l_\Xi$-Verma module to the $\g$-Verma module with the same highest weight, while the $*$-induction $\Ind_{\Xi,*}$ maps a completed $\l_\Xi$-coVerma module to the completed $\g$-coVerma module associated to the same weight.
    \end{enumerate}
\end{proposition}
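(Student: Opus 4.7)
The plan is to verify the four claims by standard arguments involving tensor-hom adjunction, the PBW decomposition, and transitivity of (co)induction, with careful bookkeeping of the transpose twist appearing in the definition of $\Ind_{\Xi,*}$.

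For claim (1), I would establish the first adjunction through
\[
\Hom_\g(U\g\otimes_{U\p^+_\Xi}N,M)\cong\Hom_{\p^+_\Xi}(N,M)\cong\Hom_{\l_\Xi}(N,M^{\u^+_\Xi}),
\]
where the first is ordinary tensor-hom adjunction and the second uses the fact that $N$ is inflated from $\l_\Xi$ (so $\u^+_\Xi$ kills $N$), forcing any $\p^+_\Xi$-map out of $N$ to land in the $\u^+_\Xi$-invariants $\Res_\Xi^!(M)$. For the second adjunction, a parallel argument shows that any $\p^-_\Xi$-map $M\to N$ factors through the $\u^-_\Xi$-coinvariants $M/\u^-_\Xi M=\Res_\Xi^*(M)$; the transpose twist then converts this $\l_\Xi$-linear map into a $\g$-linear map $M\to\Ind_{\Xi,*}(N)$.

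For claim (2), I would use PBW: $U\g\cong U\u^-_\Xi\otimes U\p^+_\Xi$ as right $U\p^+_\Xi$-modules, so $\Ind_{\Xi,!}(N)\cong U\u^-_\Xi\otimes N$ as vector spaces with $\u^-_\Xi$ acting on the first factor; applying $\C\otimes_{U\u^-_\Xi}-$ recovers $N$. The identity $\Res_\Xi^!\circ\Ind_{\Xi,*}=\id$ follows by the same argument after $\tau$-twisting (equivalently, computing $\u^+_\Xi$-invariants of $\Hom_{U\p^-_\Xi}(U\g,N)$ gives $\Hom_{U\p^-_\Xi}(U\g/U\g\cdot\u^+_\Xi,N)=\Hom_{U\p^-_\Xi}(U\p^-_\Xi,N)=N$). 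For claim (3), exactness of $\Ind_{\Xi,!}$ (resp.\ $\Ind_{\Xi,*}$) is immediate from the PBW freeness of $U\g$ as a right $U\p^+_\Xi$-module (resp.\ after $\tau$, as a left $U\p^-_\Xi$-module); left/right exactness of $\Res_\Xi^!$ and $\Res_\Xi^*$ follows from their description as $\Hom$ and tensor functors, or formally from the adjunctions in (1).

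For claim (4), I would invoke transitivity of induction: the Borel $\b^+_\Xi=\h\oplus(\n^+\cap\l_\Xi)$ of $\l_\Xi$ satisfies $\b^+=\b^+_\Xi\oplus\u^+_\Xi$, and $\u^+_\Xi$ acts trivially on $\C_\lambda$ by inflation, so
\[
\Ind_{\Xi,!}(M_\Xi(\lambda))=U\g\otimes_{U\p^+_\Xi}(U\l_\Xi\otimes_{U\b^+_\Xi}\C_\lambda)=U\g\otimes_{U\b^+}\C_\lambda=M(\lambda).
\]
The assertion for $\Ind_{\Xi,*}$ on completed coVermas follows by the parallel argument with $\p^-_\Xi$ and $\b^-$, using the transpose-twisted $\Hom_{U\p^-_\Xi}(U\g,-)$. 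The main bookkeeping burden throughout is this transpose twist; once one fixes the convention identifying the $(U\p^-_\Xi,U\g)$-bimodule structure on $U\g$ with the $(U\g,U\p^+_\Xi)$-bimodule structure via $\tau$, every $*$-side statement reduces formally to its $!$-side analogue.
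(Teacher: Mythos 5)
Your proposal is correct and follows essentially the same route as the paper: tensor--hom adjunction for the two adjoint pairs, PBW-based identifications ($U\g\cong U\u^-_\Xi\otimes U\p^+_\Xi$ and its $\tau$-twisted counterpart) for the composition identities and exactness, and transitivity of (co)induction for the statement about (co)Verma modules. The only cosmetic difference is that the paper phrases the last point as composability of induction along $\emptyset\subset\Xi\subset\Pi$ rather than writing out the Borel decomposition explicitly, but the content is identical.
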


\begin{proof}
    By the $\otimes$-$\Hom$ adjunction, for an $\l_\Xi$-module $N$ and a $\g$-module $M$, we have canonical isomorphisms
    $$\Hom_{U\g}(U\g\otimes_{U\p^+_\Xi}N,M)=\Hom_{U\p^+_\Xi}(N,M)=\Hom_{U\l_\Xi}(N,\Hom_{U\u^+_\Xi}(\C,M)).$$
    This shows that $(\Ind_{\Xi,!},\Res_\Xi^!)$ is an adjoint pair. Similarly,
    $$\Hom_{U\l_\Xi}(\C\otimes_{U\u^-_\Xi}M,N)=\Hom_{U\p^-_\Xi}(M,N)=\Hom_{U\g}(M,\Hom_{U\p^-_\Xi}(U\g,N)).$$
    This shows that $(\Res^*_\Xi,\Ind_{\Xi,*})$ is an adjoint pair.

    Moreover, for any $\l_\Xi$-module $N$,
    $$\Hom_{U\u^+_\Xi}(\C,\Hom_{U\p^-_\Xi}(U\g,N))=\Hom_{U\p^-_\Xi}(U\g\otimes_{U\u^+_\Xi}\C,N)=\Hom_{U\p^-_\Xi}(U\p^-_\Xi,N)=N.$$
    This shows that $\Res^!_\Xi\circ\Ind_{\Xi,*}=\id$.

    Similarly,
    $$\C\otimes_{U\u^-_\Xi}U\g\otimes_{U\p^+_\Xi}N=U\p^+_\Xi\otimes_{U\p^+_\Xi}N=N.$$
    This shows that $\Res^*_\Xi\circ\Ind_{\Xi,!}=\id$.

    By the PBW theorem, $U\g=U\u^-_\Xi\otimes U\p^+_\Xi$ is a free (hence projective, flat) right $U\p^+_\Xi$-module. This shows the exactness of $\Ind_{\Xi,?}$. As a right (resp. left) adjoint, $\Res_\Xi^!$ (resp. $\Res_\Xi^*$) is left (resp. right) exact.

    Recall that Verma modules (resp. completed coVerma modules) are constructed by $!$-induction (resp. $*$-induction) with respect to the empty collection $\emptyset\subset\Pi$. The last assertion follows from the composability of $!$-induction (resp. $*$-induction), which is again due to the composability of $\otimes$ (resp. $\Hom$).
\end{proof}

Now we see that there is a natural transformation from $\Ind_{\Xi,!}$ to $\Ind_{\Xi,*}$ fitting into the following diagram
$$\begin{tikzcd}
{\Ind_{\Xi,!}} \arrow[Rightarrow, r, no head] \arrow[d]                       & {\Ind_{\Xi,!}\circ\Res^!_\Xi\circ\Ind_{\Xi,*}} \arrow[d] \\
{\Ind_{\Xi,*}\circ\Res^*_\Xi\circ\Ind_{\Xi,!}} \arrow[Rightarrow, r, no head] & {\Ind_{\Xi,*}}                                          
\end{tikzcd}$$

\begin{definition}
    \label{minimal-pinduction}
    Let $\Ind_{\Xi,!*}\colon\l_\Xi\Mod\to\g\Mod$ be the image of the natural transformation $\Ind_{\Xi,!}\to\Ind_{\Xi,*}$, that is (on the object level),
    $$\Ind_{\Xi,!*}(N)=\im(\Ind_{\Xi,!}(N)\to\Ind_{\Xi,*}(N)).$$
    We call $\Ind_{\Xi,!*}$ the \textit{minimal parabolic induction} functor, or the \textit{intermediate parabolic induction} functor.
\end{definition}

\begin{proposition}
    \label{inj-surj}
    Let $N_1\proj N_2$ be a surjection in $\l_\Xi\Mod$, then the induced map $\Ind_{\Xi,!*}(N_1)\to\Ind_{\Xi,!*}(N_2)$ is also surjective.

    Dually, let $N_1'\incl N_2'$ be an injection in $\l_\Xi\Mod$, then the induced map $\Ind_{\Xi,!*}(N_1')\to\Ind_{\Xi,!*}(N_2')$ is also injective.
\end{proposition}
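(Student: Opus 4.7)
The plan is to observe that for any morphism $\varphi\colon N_1\to N_2$ in $\l_\Xi\Mod$, naturality of the transformation $\Ind_{\Xi,!}\to\Ind_{\Xi,*}$ of Definition~\ref{minimal-pinduction} yields a commutative square with horizontal arrows the natural maps $\Ind_{\Xi,!}(N_i)\to\Ind_{\Xi,*}(N_i)$ and vertical arrows $\Ind_{\Xi,!}(\varphi)$ and $\Ind_{\Xi,*}(\varphi)$. Since $\Ind_{\Xi,!*}(N_i)$ is by definition the image of the horizontal arrow, it is simultaneously a quotient of $\Ind_{\Xi,!}(N_i)$ and a subobject of $\Ind_{\Xi,*}(N_i)$, and the map $\Ind_{\Xi,!*}(N_1)\to\Ind_{\Xi,!*}(N_2)$ is precisely the induced morphism on images. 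The engine driving both claims is the exactness of $\Ind_{\Xi,!}$ and $\Ind_{\Xi,*}$ established in the preceding proposition.

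For the surjection statement, I would use exactness of $\Ind_{\Xi,!}$ to turn the surjection $N_1\proj N_2$ into a surjection on the left column of the square. Given any element $\bar{x}\in\Ind_{\Xi,!*}(N_2)$, I would lift it to some $x\in\Ind_{\Xi,!}(N_2)$, and then lift $x$ further to $y\in\Ind_{\Xi,!}(N_1)$. The image of $y$ in $\Ind_{\Xi,*}(N_1)$ lies in $\Ind_{\Xi,!*}(N_1)$ by the very definition of the latter, and commutativity of the square shows that it maps to $\bar{x}$, giving the desired surjectivity.

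For the injection statement, I would use exactness of $\Ind_{\Xi,*}$, which turns the injection $N_1'\incl N_2'$ into an injection $\Ind_{\Xi,*}(N_1')\hookrightarrow\Ind_{\Xi,*}(N_2')$ on the right column. The map $\Ind_{\Xi,!*}(N_1')\to\Ind_{\Xi,!*}(N_2')$ then factors as the composite $\Ind_{\Xi,!*}(N_1')\hookrightarrow\Ind_{\Xi,*}(N_1')\hookrightarrow\Ind_{\Xi,*}(N_2')$, in which the first inclusion comes from the definition of $\Ind_{\Xi,!*}$ as an image and the second was just obtained; both are injective, and the composite visibly lands in $\Ind_{\Xi,!*}(N_2')$, so the induced map is injective.

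Overall, this is a formal diagram chase and I do not expect any serious obstacle, since the substantive content has already been packaged into the exactness assertion of the previous proposition; the only thing to verify in each case is that the element produced by the chase really lives in the relevant image, which is immediate from commutativity. If desired, the two halves could also be related to each other via the transpose anti-involution $\tau$ of Section~\ref{transpose-anti-involution}, but the direct argument above is shorter and avoids passing to duals.
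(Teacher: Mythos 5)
Your proposal is correct and follows essentially the same route as the paper: both halves rest on the exactness of $\Ind_{\Xi,!}$ and $\Ind_{\Xi,*}$ together with the fact that $\Ind_{\Xi,!*}(N)$ is simultaneously a quotient of $\Ind_{\Xi,!}(N)$ and a subobject of $\Ind_{\Xi,*}(N)$, chased through the evident commutative squares. The paper phrases the surjectivity half as ``a composite of surjections is surjective'' rather than lifting elements, but the content is identical.
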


\begin{proof}
    Since $\Ind_{\Xi,!}$ is exact, the map $\Ind_{\Xi,!}(N_1)\to\Ind_{\Xi,!}(N_2)$ is surjective. Consider the commutative diagram
    $$\begin{tikzcd}
{\Ind_{\Xi,!}(N_1)} \arrow[d, two heads] \arrow[r, two heads] & {\Ind_{\Xi,!}(N_2)} \arrow[d, two heads] \\
{\Ind_{\Xi,!*}(N_1)} \arrow[r]                                & {\Ind_{\Xi,!*}(N_2)}                    
\end{tikzcd}$$
    The surjectivity of the bottom arrow follows.

    Dually, the map $\Ind_{\Xi,*}(N_1')\to\Ind_{\Xi,*}(N_2')$ is injective because $\Ind_{\Xi,*}$ is exact. Consider the commutative diagram
    $$\begin{tikzcd}
{\Ind_{\Xi,!*}(N_1')} \arrow[r] \arrow[d, hook] & {\Ind_{\Xi,!*}(N_2')} \arrow[d, hook] \\
{\Ind_{\Xi,*}(N_1')} \arrow[r, hook]                        & {\Ind_{\Xi,*}(N_2')}                 
\end{tikzcd}$$
    The injectivity of the top arrow follows.
\end{proof}

\begin{definition}
    Let $\J_{\Xi,!}^{-1}\colon\l_\Xi\Mod\to\g\Mod$ be the kernel of the natural transformation $\Ind_{\Xi,!}\proj\Ind_{\Xi,!*}$, $\J_{\Xi,*}^1\colon\l_\Xi\Mod\to\g\Mod$ be the cokernel of the natural transformation $\Ind_{\Xi,!*}\incl\Ind_{\Xi,*}$.\footnote{$\J$ stands for Jantzen.}
\end{definition}

Let $N$ be an $\l_\Xi$-module, then the map $\Ind_{\Xi,!}(N)\to\Ind_{\Xi,*}(N)$ can be explicitly described as follows. Using the PBW decomposition $U\g=U\u^-_\Xi\otimes U\p^+_\Xi$, we can identify $U\g\otimes_{U\p^+_\Xi}N$ with $U\u^-_\Xi\otimes N$, and $\Hom_{U\p^-_\Xi}(U\g,N)$ with $\Hom(U\u^-_\Xi,N)$. Let
$$\epsilon^\pm\colon U\u^\pm_\Xi\proj U\u^\pm_\Xi/\u^\pm_\Xi(U\u^\pm_\Xi)=\C$$
be the augmentation maps, $\phi=\epsilon^-\otimes\id\otimes\epsilon^+$ be the map
$$\phi=\epsilon^-\otimes\id\otimes\epsilon^+\colon U\g=U\u^-_\Xi\otimes U\l_\Xi\otimes U\u^+_\Xi\to\C\otimes U{\l_\Xi}\otimes\C =U\l_\Xi.$$
Then the map $\Ind_{\Xi,!}(N)\to\Ind_{\Xi,*}(N)$ can be identified with
$$U\u^-_\Xi\otimes N\to\Hom(U\u^-_\Xi,N),X\otimes n\mapsto[Y\mapsto\phi(\tau(X)Y)n=\phi(\tau(Y)X)n].$$

\begin{proposition}
    \label{j-1}
    Let $N$ be an $\l_\Xi$-module. Under the identification
    $$\Ind_{\Xi,!}(N)=U\g\otimes_{U\p^+_\Xi}N=U\u^-_\Xi\otimes N,$$
    we have $\J^{-1}_{\Xi,!}(N)\subset\u^-_\Xi(U\u^-_\Xi)\otimes N$.

    Under the identification
    $$\Ind_{\Xi,*}(N)=\Hom_{U\p^-_\Xi}(U\g,N)=\Hom(U\u^-_\Xi,N),$$
    we have $\Hom(\u^-_\Xi(U\u^-_\Xi),N)\proj\J^1_{\Xi,*}(N)$.
\end{proposition}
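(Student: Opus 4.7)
The first observation is that, by the definition of $\Ind_{\Xi,!*}(N)$ as the image of $\Ind_{\Xi,!}(N)\to\Ind_{\Xi,*}(N)$, we have $\J^{-1}_{\Xi,!}(N)=\ker(\Ind_{\Xi,!}(N)\to\Ind_{\Xi,*}(N))$ and $\J^1_{\Xi,*}(N)=\coker(\Ind_{\Xi,!}(N)\to\Ind_{\Xi,*}(N))$. Both assertions therefore reduce to analysing this single map, for which we have the explicit formula $X\otimes n\mapsto[Y\mapsto\phi(\tau(X)Y)n]$ recorded just before the statement. The key auxiliary input is the PBW splitting $U\u^-_\Xi=\C\cdot 1\oplus\u^-_\Xi(U\u^-_\Xi)$, used on both sides of the map.

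For the containment describing $\J^{-1}_{\Xi,!}(N)$, the plan is to post-compose with evaluation at $Y=1$, yielding a linear map $U\u^-_\Xi\otimes N\to N$, $X\otimes n\mapsto\phi(\tau(X))n$. Writing $X=c+X'$ with $c\in\C$ and $X'\in\u^-_\Xi(U\u^-_\Xi)$, and using that $\tau$ interchanges $\u^+_\Xi$ and $\u^-_\Xi$, one sees that $\tau(X')\in\u^+_\Xi(U\u^+_\Xi)$ lies purely in the third PBW factor of $U\g=U\u^-_\Xi\otimes U\l_\Xi\otimes U\u^+_\Xi$ and is killed by $\epsilon^+$, while $\tau(c)=c$. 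Hence $\phi(\tau(X))=c$, so the kernel of the evaluation-at-$1$ map is exactly $\u^-_\Xi(U\u^-_\Xi)\otimes N$. Since any element of $\J^{-1}_{\Xi,!}(N)$ is killed already in $\Ind_{\Xi,*}(N)$, and a fortiori at $Y=1$, the desired containment follows.

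The second assertion is proved dually. Given $f\in\Hom(U\u^-_\Xi,N)$, set $n:=f(1)$ and consider the image in $\Ind_{\Xi,*}(N)$ of $1\otimes n\in\Ind_{\Xi,!}(N)$, namely the function $Y\mapsto\phi(Y)n$; at $Y=1$ this equals $n=f(1)$. Therefore $f$ differs from an element of the image of $\Ind_{\Xi,!}(N)$ by a function vanishing on $1$, i.e.\ by an element of $\Hom(\u^-_\Xi(U\u^-_\Xi),N)$ (regarded inside $\Hom(U\u^-_\Xi,N)$ via the PBW splitting). Passing to the cokernel $\J^1_{\Xi,*}(N)$, the class of $f$ is represented by an element of $\Hom(\u^-_\Xi(U\u^-_\Xi),N)$, which gives the claimed surjection.

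There is \emph{no serious obstacle}: once the formula for $\Ind_{\Xi,!}(N)\to\Ind_{\Xi,*}(N)$ is unpacked and the PBW splitting is in place, the proof amounts to the single observation that $\phi\circ\tau$ on $U\u^-_\Xi$ is the augmentation to $\C$. The only point requiring brief care is identifying $\J^{-1}_{\Xi,!}$ and $\J^1_{\Xi,*}$ with the kernel and cokernel of $\Ind_{\Xi,!}\to\Ind_{\Xi,*}$, which is immediate from the definition of $\Ind_{\Xi,!*}$ as image.
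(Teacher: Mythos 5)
Your proof is correct and is essentially the paper's argument: both hinge on the observation that composing $\Ind_{\Xi,!}(N)\to\Ind_{\Xi,*}(N)$ with evaluation at $1$ recovers the augmentation $\epsilon^-\otimes\id\colon U\u^-_\Xi\otimes N\to N$, whose kernel is $\u^-_\Xi(U\u^-_\Xi)\otimes N$, and dually for the cokernel. The paper packages this as a diagram chase with exact rows while you work with explicit elements and the formula for the map, but the underlying idea is the same.
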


\begin{proof}
    Let us consider a commutative diagram with exact rows
    $$\begin{tikzcd}
0 \arrow[r] & {\J^{-1}_{\Xi,!}(N)} \arrow[r] \arrow[dd, dashed] & {\Ind_{\Xi,!}(N)} \arrow[r] \arrow[Rightarrow, dd, no head] & {\Ind_{\Xi,!*}(N)} \arrow[r] \arrow[d] & 0 \\
            &                                                   &                                                             & {\Ind_{\Xi,*}(N)} \arrow[d]            &   \\
0 \arrow[r] & \u^-_\Xi(U\u^-_\Xi)\otimes N \arrow[r]            & U\u^-_\Xi\otimes N \arrow[r]                                & \C\otimes N=N \arrow[r]                & 0
\end{tikzcd}$$
    The existence of the dashed arrow follows from the exactness of rows and the commutativity of the diagram.

    For the dual statement, notice that we have a commutative diagram with exact rows
    $$\begin{tikzcd}
0 \arrow[r] & {\Hom(\C,N)=N} \arrow[d] \arrow[r] & {\Hom(U\u^-_\Xi,N)} \arrow[r] \arrow[Rightarrow, dd, no head] & {\Hom(\u^-_\Xi(U\u^-_\Xi),N)} \arrow[r] \arrow[dd, dashed] & 0 \\
            & {\Ind_{\Xi,!}(N)} \arrow[d]        &                                                               &                                                           &   \\
0 \arrow[r] & {\Ind_{\Xi,!*}(N)} \arrow[r]       & {\Ind_{\Xi,*}(N)} \arrow[r]                                   & {\J^1_{\Xi,*}(N)} \arrow[r]                               & 0
\end{tikzcd}$$
The existence of the dashed arrow follows from the exactness of rows and the commutativity of the diagram.
\end{proof}

\begin{proposition}
    \label{j-2}
    Let $N$ be an $\l_\Xi$-module. For any $\g$-submodule $M\subset\Ind_{\Xi,!}(N)$ that is contained in $\u^-_\Xi(U\u^-_\Xi)\otimes N$ under the identification $\Ind_{\Xi,!}(N)=U\u^-_\Xi\otimes N$, we have $M\subset\J^{-1}_{\Xi,1}(N)$. Dually, any quotient $\g$-module $\Ind_{\Xi,*}(N)\proj M'$ factorizing through $\Hom(\u^-_\Xi(U\u^-_\Xi),N)$ under the identification $\Ind_{\Xi,*}(N)=\Hom(U\u^-_\Xi,N)$ is a quotient of $\J^1_{\Xi,*}(N)$.
\end{proposition}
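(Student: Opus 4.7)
The plan is to exploit the adjunctions recorded earlier and to observe that both claims concern the kernel and cokernel of the natural transformation $\psi_N\colon\Ind_{\Xi,!}(N)\to\Ind_{\Xi,*}(N)$. By construction $\Ind_{\Xi,!*}(N)=\im(\psi_N)$, so $\J^{-1}_{\Xi,!}(N)=\ker(\psi_N)$ and $\J^1_{\Xi,*}(N)=\coker(\psi_N)$. The two parts of the proposition thus become maximality (resp.\ minimality) statements for this kernel and cokernel, and the natural tool is adjunction.

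For the first claim, I would apply the $(\Res^*_\Xi,\Ind_{\Xi,*})$ adjunction (equivalently, the $\otimes$--$\Hom$ adjunction against the $(U\p^-_\Xi,U\g)$-bimodule $U\g$):
$$\Hom_{U\g}(M,\Ind_{\Xi,*}(N))=\Hom_{U\p^-_\Xi}(M,N).$$
A short computation---either directly from the explicit formula for $\psi_N$ preceding Proposition~\ref{j-1}, by evaluating at $Y=1$, or by tracing the counit of the adjunction together with the isomorphism $\Res^*_\Xi\circ\Ind_{\Xi,!}=\id$ from the previous proposition---shows that $\psi_N$ corresponds under this adjunction to the $\p^-_\Xi$-linear augmentation $\pi_N\colon U\u^-_\Xi\otimes N\to N,\ X\otimes n\mapsto\epsilon^-(X)n$, whose kernel is precisely $\u^-_\Xi(U\u^-_\Xi)\otimes N$. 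Given a $\g$-submodule $\iota\colon M\hookrightarrow\Ind_{\Xi,!}(N)$ landing in $\ker(\pi_N)$, the composite $\psi_N\circ\iota$ corresponds to $\pi_N\circ\iota=0$, hence $\psi_N|_M=0$ and $M\subset\J^{-1}_{\Xi,!}(N)$.

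For the dual statement, I would apply the $(\Ind_{\Xi,!},\Res^!_\Xi)$ adjunction:
$$\Hom_{U\g}(\Ind_{\Xi,!}(N),M')=\Hom_{U\l_\Xi}(N,\Res^!_\Xi(M')).$$
Evaluating the formula for $\psi_N$ at $X=1$ shows that $\psi_N(1\otimes n)$ is the function $U\u^-_\Xi\to N$ sending $1$ to $n$ and vanishing on $\u^-_\Xi(U\u^-_\Xi)$ (since $\phi$ kills $\tau(Y)$ whenever $Y$ lies in the augmentation ideal of $U\u^-_\Xi$). Thus $\psi_N(1\otimes n)$ lies in the canonical copy $N=\Hom(\C,N)\subset\Hom(U\u^-_\Xi,N)=\Ind_{\Xi,*}(N)$ and equals $n$ there. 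If $q\colon\Ind_{\Xi,*}(N)\proj M'$ factorizes through the quotient $\Hom(\u^-_\Xi(U\u^-_\Xi),N)$, then $q$ kills this canonical copy of $N$. Under the adjunction, $q\circ\psi_N$ corresponds to the $\l_\Xi$-linear map $n\mapsto q(\psi_N(1\otimes n))=q(n)=0$, whence $q\circ\psi_N=0$ and $q$ factors through $\coker(\psi_N)=\J^1_{\Xi,*}(N)$.

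The argument is essentially formal once the two correspondences---$\psi_N\leftrightarrow\pi_N$ under the first adjunction, and $\psi_N(1\otimes n)=n$ inside the canonical copy of $N$---are in hand. The only mildly technical point is verifying the former, which amounts to a routine unwinding of the adjunction using the explicit formula for $\psi_N$ already recorded in the text; I do not anticipate a genuine obstacle.
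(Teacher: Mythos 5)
Your proof is correct. For the second half it is essentially the paper's own argument in adjunction clothing: the paper reduces to checking that the image of the generator $1\otimes n$ vanishes in $M'$, which is exactly your use of the unit of the $(\Ind_{\Xi,!},\Res^!_\Xi)$ adjunction together with the computation $\psi_N(1\otimes n)=[Y\mapsto\epsilon^-(Y)n]$. For the first half, however, your route genuinely differs: the paper proves $M\subset\ker(\psi_N)$ by a hands-on PBW computation, decomposing each $\tau(Y)X_i$ as $\sum_j X_{i,j}^-X_{i,j}^0X_{i,j}^+$ and checking directly that $\sum_i\phi(\tau(Y)X_i)n_i=0$ follows from the hypothesis $\tau(Y).(\sum_i X_i\otimes n_i)\in\u^-_\Xi(U\u^-_\Xi)\otimes N$. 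You instead observe that under the bijection
$$\Hom_{U\g}(M,\Hom_{U\p^-_\Xi}(U\g,N))\cong\Hom_{U\p^-_\Xi}(M,N),\qquad f\mapsto\bigl(m\mapsto f(m)(1)\bigr),$$
the restriction $\psi_N|_M$ corresponds to $\pi_N|_M$ with $\pi_N(X\otimes n)=\epsilon^-(X)n$, so the vanishing of $\psi_N|_M$ is forced by the vanishing of $\pi_N|_M$ and the injectivity of the adjunction map. This is cleaner and makes transparent why the statement is a maximality property of $\ker(\psi_N)$; the paper's computation is, in effect, an explicit verification of that same injectivity. The one point worth spelling out in a final write-up is the identity $\phi(\tau(X))=\epsilon^-(X)$ for $X\in U\u^-_\Xi$ (i.e.\ that $\tau$ intertwines the augmentations of $U\u^-_\Xi$ and $U\u^+_\Xi$), which is what identifies the adjoint of $\psi_N$ with $\pi_N$; this is immediate from the definition of $\phi=\epsilon^-\otimes\id\otimes\epsilon^+$, and your argument is complete.
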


\begin{proof}
    Every element in $M$ can be written as a finite sum $\sum_i X_i\otimes n_i$ for some $X_i\in U\u^-_\Xi$ and $n_i\in N$. To show that it lies in
    $$\J^{-1}_{\Xi,!}(N)=\ker(\Ind_{\Xi,!}(N)\to\Ind_{\Xi,*}(N)),$$
    it suffices to show that $\sum_i\phi(\tau(Y)X_i)n_i=0$ for any $Y\in U\g$. Let us decompose each $\tau(Y)X_i$ as a finite sum $\tau(Y)X_i=\sum_j X_{i,j}^-X_{i,j}^0X_{i,j}^+$ for some $X_{i,j}^-\in U\u^-_\Xi,X_{i,j}^0\in U\l_\Xi,X_{i,j}^+\in U\u^+_\Xi$, then
    $$\sum_i\phi(\tau(Y)X_i)n_i=\sum_{i,j}\phi(X_{i,j}^-X_{i,j}^0X_{i,j}^+)n_i=\sum_{i,j}\epsilon^-(X_{i,j}^-)X_{i,j}^0\epsilon^+(X_{i,j}^+)n_i.$$
    Notice that the $\tau(Y)$ action on $\sum_i X_i\otimes n_i$ is
    $$\sum_i\tau(Y)X_i\otimes_{U\p^+_\Xi}n_i=\sum_{i,j}X_{i,j}^-X_{i,j}^0X_{i,j}^+\otimes_{U\p^+_\Xi}n_i=\sum_{i,j}X_{i,j}^-\otimes X_{i,j}^0\epsilon^+(X_{i,j}^+)n_i.$$
    By assumption, it is contained in $M\subset\u^-_\Xi(U\u^-_\Xi)\otimes N$. This means exactly
    $$\sum_{i,j}\epsilon^-(X_{i,j}^-)X_{i,j}^0\epsilon^+(X_{i,j}^+)n_i=0.$$
    Therefore, we conclude that $\sum_i\phi(\tau(Y)X_i)n_i=0$.

    Dually, to show that $M'$ is a quotient of
    $$\J^1_{\Xi,*}(N)=\coker(\Ind_{\Xi,!}(N)\to\Ind_{\Xi,*}(N)),$$
    it suffices to show that the composition
    $$\Ind_{\Xi,!}(N)\to\Ind_{\Xi,*}(N)\to M'$$
    vanishes. As a $U\g$-module, $\Ind_{\Xi,!}(N)$ is generated by $1\otimes n\in 1\otimes N=N$. Hence it suffices to show that the image of $1\otimes n$ vanishes in $M'$. Notice that we have a commutative diagram
    $$\begin{tikzcd}
{\Ind_{\Xi,!}(N)} \arrow[r] \arrow[Rightarrow, d, no head] & {\Ind_{\Xi,*}(N)} \arrow[r, two heads]   & M'                                                 \\
U\u^-_\Xi\otimes N \arrow[r]                               & {\Hom(U\u^-_\Xi,N)} \arrow[r, two heads] & {\Hom(\u^-_\Xi(U\u^-_\Xi),N)} \arrow[u, two heads]
\end{tikzcd}$$
    The image of $1\otimes n$ in $\Hom(\u^-_\Xi(U\u^-_\Xi),N)$ vanishes, so its image in $M'$ also vanishes. We are done.
\end{proof}

\begin{remark}
    \label{max}
    Combining Proposition~\ref{j-1}~and~\ref{j-2}, we see that for any $\l_\Xi$-module $N$, $\J^{-1}_{\Xi,!}(N)$ is \textit{the} maximal $\g$-submodule of $\Ind_{\Xi,!}(N)=U\u^-_\Xi\otimes N$ that is contained in $\u^-_\Xi(U\u^-_\Xi)\otimes N$, and $\J^1_{\Xi,*}(N)$ is \textit{the} maximal $\g$-module quotient of $\Ind_{\Xi,*}(N)=\Hom(U\u^-_\Xi,N)$ that factors through $\Hom(\u^-_\Xi(U\u^-_\Xi),N)$.
\end{remark}

Let $\operatorname{Oblv}^\g_{\l_\Xi}$ be the forgetful functor from $\g\Mod$ to $\l_\Xi\Mod$. There are canonical natural transformations
$$\Res_\Xi^!\incl\Oblv^\g_{\l_\Xi},\Oblv^\g_{\l_\Xi}\proj\Res_\Xi^*,$$
the composition of which induces a natural transformation $\Res_\Xi^!\to\Res_\Xi^*$.

\begin{definition}
    Let $\Res_\Xi^{!*}\colon\g\Mod\to\l_\Xi\Mod$ be the image of the natural transformation $\Res_\Xi^!\to\Res_\Xi^*$. It seems plausible to call $\Res_\Xi^{!*}$ the \textit{intermediate parabolic restriction} functor.
\end{definition}

Now we have a commutative diagram of natural transformations
$$\begin{tikzcd}
\id \arrow[r] \arrow[Rightarrow, rrrr, no head, bend left]  \arrow[Rightarrow, dd, no head]                     & {\Res_\Xi^!\circ\Ind_{\Xi,!}} \arrow[r] \arrow[d, two heads] & {\Res_\Xi^!\circ\Ind_{\Xi,!*}} \arrow[r, hook] \arrow[d, two heads] & {\Res_\Xi^!\circ\Ind_{\Xi,*}} \arrow[Rightarrow, r, no head] \arrow[d, two heads] & \id \arrow[Rightarrow, dd, no head] \\
     & {\Res^{!*}_\Xi\circ\Ind_{\Xi,!}} \arrow[r] \arrow[d, hook]   & {\Res^{!*}_\Xi\circ\Ind_{\Xi,!*}} \arrow[r] \arrow[d, hook]         & {\Res^{!*}_\Xi\circ\Ind_{\Xi,*}} \arrow[d, hook]                                  &     \\
\id \arrow[Rightarrow, r, no head] \arrow[Rightarrow, rrrr, no head, bend right] & {\Res^*_\Xi\circ\Ind_{\Xi,!}} \arrow[r, two heads]           & {\Res^*_\Xi\circ\Ind_{\Xi,!*}} \arrow[r]                            & {\Res^*_\Xi\circ\Ind_{\Xi,*}} \arrow[r]                                           & \id
\end{tikzcd}$$
Here the natural transformations $\id\to\Res^!_\Xi\circ\Ind_{\Xi,!}$ and $\Res^*_\Xi\circ\Ind_{\Xi,*}\to\id$ are induced by adjunctions.

The map $\Res^!_\Xi\circ\Ind_{\Xi,!*}\to\Res^!_\Xi\circ\Ind_{\Xi,*}$ (resp. $\Res^*_\Xi\circ\Ind_{\Xi,!}\to\Res^*_\Xi\circ\Ind_{\Xi,!*}$) is monic (resp. epic) because $\Res^!_\Xi$ (resp. $\Res^*_\Xi$) is left (resp. right) exact.

\begin{proposition}
    \label{res-ind}
    We have $\Res^!_\Xi\circ\Ind_{\Xi,!*}=\Res^*_\Xi\circ\Ind_{\Xi,!*}=\Res^{!*}_\Xi\circ\Ind_{\Xi,!*}=\id$.
\end{proposition}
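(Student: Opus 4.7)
The plan is to exploit the large commutative diagram displayed just before the proposition, together with the identities $\Res^!_\Xi\circ\Ind_{\Xi,*}=\id$ and $\Res^*_\Xi\circ\Ind_{\Xi,!}=\id$ already established in the earlier proposition.

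First I would handle the identity $\Res^!_\Xi\circ\Ind_{\Xi,!*}=\id$ by reading off the top row of the diagram. Since $\Ind_{\Xi,!*}\incl\Ind_{\Xi,*}$ is a monomorphism (it is, by construction, the inclusion of an image into its ambient object) and $\Res^!_\Xi$ is left exact, the induced map $\Res^!_\Xi\circ\Ind_{\Xi,!*}\to\Res^!_\Xi\circ\Ind_{\Xi,*}=\id$ is a monomorphism of functors. On the other hand, the outer composition along the top row of the diagram is the identity transformation of $\id$ (this is a formal consequence of the triangle identities for the adjoint pair $(\Ind_{\Xi,!},\Res^!_\Xi)$ together with the identification $\Res^!_\Xi\circ\Ind_{\Xi,*}=\id$), and it factors as $\id\to\Res^!_\Xi\circ\Ind_{\Xi,!*}\to\id$. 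The first factor therefore exhibits a section of our monomorphism, which must then be an isomorphism.

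Dually, I would argue for $\Res^*_\Xi\circ\Ind_{\Xi,!*}=\id$ along the bottom row: right exactness of $\Res^*_\Xi$ applied to the epi $\Ind_{\Xi,!}\proj\Ind_{\Xi,!*}$ yields an epi $\id=\Res^*_\Xi\circ\Ind_{\Xi,!}\proj\Res^*_\Xi\circ\Ind_{\Xi,!*}$, and the outer bottom composition, which is again the identity of $\id$, provides a retraction; hence the epi is an isomorphism.

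Finally, by the very definition of $\Res^{!*}_\Xi$, the functor $\Res^{!*}_\Xi\circ\Ind_{\Xi,!*}(N)$ is the image of the natural map $\Res^!_\Xi\Ind_{\Xi,!*}(N)\to\Res^*_\Xi\Ind_{\Xi,!*}(N)$, which under the previous two identifications is an endomorphism of $N$. I would verify this endomorphism is $\id_N$ by tracing the identifications: the class $n\in N\cong\Res^!_\Xi\Ind_{\Xi,!*}(N)$ corresponds to the image of $1\otimes n\in U\u^-_\Xi\otimes N=\Ind_{\Xi,!}(N)$ in $\Ind_{\Xi,!*}(N)$ (which lies in the $\u^+_\Xi$-invariants, since $1\otimes n$ is killed by $\u^+_\Xi$), and pushing this element further into $\Ind_{\Xi,!*}(N)/\u^-_\Xi\Ind_{\Xi,!*}(N)=N$ returns $n$. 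Thus the map is the identity, its image is $N$, and $\Res^{!*}_\Xi\circ\Ind_{\Xi,!*}=\id$.

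The only real obstacle is book-keeping: one must confirm that the outer compositions along the top and bottom rows of the displayed diagram truly are the identity natural transformation of $\id$. This is a diagram chase using the triangle identities of the two adjunctions and the units/counits implicit in the identifications $\Res^!_\Xi\circ\Ind_{\Xi,*}=\Res^*_\Xi\circ\Ind_{\Xi,!}=\id$. Once granted, the rest of the proof is purely formal.
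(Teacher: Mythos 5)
Your argument is correct and rests on the same commutative diagram the paper uses, but it places the burden of proof on a different map, which makes it leaner. The paper's proof of $\Res^!_\Xi\circ\Ind_{\Xi,!*}=\id$ spends most of its effort showing that the unit-side composite $\id\to\Res^!_\Xi\circ\Ind_{\Xi,!}\to\Res^!_\Xi\circ\Ind_{\Xi,!*}$ is monic, via the concrete PBW identification and the containment $\J^{-1}_{\Xi,!}(N)\subset\u^-_\Xi(U\u^-_\Xi)\otimes N$ from Proposition~\ref{j-1}; it then invokes the pair of monos composing to the identity. You instead observe that the \emph{other} map, $\Res^!_\Xi\circ\Ind_{\Xi,!*}\to\Res^!_\Xi\circ\Ind_{\Xi,*}=\id$, is monic for purely formal reasons (left exactness of $\Res^!_\Xi$ applied to the inclusion $\Ind_{\Xi,!*}\incl\Ind_{\Xi,*}$), and that a monomorphism admitting a section is an isomorphism. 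Since a retraction $v$ with $v\circ u=\id$ and $v$ monic already forces $u\circ v=\id$, your version needs no concrete computation at all for the first two identities; the paper's monicity argument for the unit-side map is, strictly speaking, redundant given the monicity of the other factor. The one point you correctly flag as requiring care --- that the outer composite along the top (resp.\ bottom) row is the identity transformation --- does follow from the triangle identity for $(\Ind_{\Xi,!},\Res^!_\Xi)$ (resp.\ $(\Res^*_\Xi,\Ind_{\Xi,*})$) together with the way the map $\Ind_{\Xi,!}\to\Ind_{\Xi,*}$ is defined through the identifications $\Res^!_\Xi\circ\Ind_{\Xi,*}=\Res^*_\Xi\circ\Ind_{\Xi,!}=\id$; the paper simply asserts this via the bent equality arrows in its diagram. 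Your concrete trace of $n\mapsto 1\otimes n\mapsto n$ for the third identity is also fine and supplies a justification the paper leaves implicit when it writes $\Res^{!*}_\Xi\circ\Ind_{\Xi,!*}=\im(\id\to\id)=\id$. In short: same skeleton, but your proof trades the paper's explicit weight-space/PBW argument for a formal one, at the cost of having to verify the triangle-identity claim that the paper encodes in its diagram.
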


\begin{proof}
    Consider the following commutative diagram of natural transformations
    $$\begin{tikzcd}
            & \id \arrow[r]                                     & {\Res^!_\Xi\circ\Ind_{\Xi,!}} \arrow[r] \arrow[d, hook] & {\Res^!_\Xi\circ\Ind_{\Xi,!*}} \arrow[d, hook]  &   \\
0 \arrow[r] & {\Oblv^\g_{\l_\Xi}\circ\J^{-1}_{\Xi,!}} \arrow[r] & {\Oblv^\g_{\l_\Xi}\circ\Ind_{\Xi,!}} \arrow[r]          & {\Oblv^\g_{\l_\Xi}\circ\Ind_{\Xi,!*}} \arrow[r] & 0
\end{tikzcd}$$
    whose bottom row is exact. We claim that the composition
    $$\id\to\Res^!_\Xi\circ\Ind_{\Xi,!}\to\Res^!_\Xi\circ\Ind_{\Xi,!*}\to\Oblv^\g_{\l_\Xi}\circ\Ind_{\Xi,!*}$$
    is monic. In fact, for any $\l_\Xi$-module,
    $$\im(N\to\Res^!_\Xi\circ\Ind_{\Xi,!}(N)\to\Oblv^\g_{\l_\Xi}\circ\Ind_{\Xi,!}(N))=1\otimes N$$
    under the identification $\Oblv^\g_{\l_\Xi}\circ\Ind_{\Xi,!}(N)=U\u^-_\Xi\otimes N$. By Proposition~\ref{j-1}, we see that it has zero intersection with
    $$\Oblv^\g_{\l_\Xi}\circ\J^{-1}_{\Xi,!}(N)\subset\u^-_\Xi(U\u^-_\Xi)\otimes N.$$
    This shows that the composition
    $$\id\to\Res^!_\Xi\circ\Ind_{\Xi,!}\to\Oblv^\g_{\l_\Xi}\circ\Ind_{\Xi,!}\to\Oblv^\g_{\l_\Xi}\circ\Ind_{\Xi,!*}$$
    is monic, i.e. the composition
    $$\id\to\Res^!_\Xi\circ\Ind_{\Xi,!}\to\Res^!_\Xi\circ\Ind_{\Xi,!*}\to\Oblv^\g_{\l_\Xi}\circ\Ind_{\Xi,!*}$$
    is monic. Consequently, the composition
    $$\id\to\Res^!_\Xi\circ\Ind_{\Xi,!}\to\Res^!_\Xi\circ\Ind_{\Xi,!*}$$
    is monic. Now we have two monos
    $$\id\incl\Res^!_\Xi\circ\Ind_{\Xi,!*},\Res^!_\Xi\circ\Ind_{\Xi,!*}\to\Res^!_\Xi\circ\Ind_{\Xi,*}=\id$$
    whose composition is the identity. This shows that they are isomorphisms, and $\Res^!_\Xi\circ\Ind_{\Xi,!*}=\id$.

    Dually, consider the following commutative diagram of natural transformations
    $$\begin{tikzcd}
0 \arrow[r] & {\Oblv^\g_{\l_\Xi}\circ\Ind_{\Xi,!*}} \arrow[r] \arrow[d, two heads] & {\Oblv^\g_{\l_\Xi}\circ\Ind_{\Xi,*}} \arrow[r] \arrow[d, two heads] & {\Oblv^\g_{\l_\Xi}\circ\J^1_{\Xi,*}} \arrow[r] & 0 \\
            & {\Res^*_\Xi\circ\Ind_{\Xi,!*}} \arrow[r]                             & {\Res^*_\Xi\circ\Ind_{\Xi,*}} \arrow[r]                             & \id                                            &  
\end{tikzcd}$$
    whose top row is exact. We claim that the composition
    $$\Res^*_\Xi\circ\Ind_{\Xi,!*}\to\Res^*_\Xi\circ\Ind_{\Xi,*}\to \id$$
    is epic. In fact, for any $\l_\Xi$-module $N$, the composition
    $$\Oblv^\g_{\l_\Xi}\circ\Ind_{\Xi,*}(N)\to\Res^*_\Xi\circ\Ind_{\Xi,*}(N)\to N$$
    is identified with the quotient
    $$\Hom(U\u^-_\Xi,N)\proj\Hom(\C,N),$$
    whose kernel is identified with $\Hom(\u^-_\Xi(U\u^-_\Xi),N)$. By Proposition~\ref{j-1}, $\Hom(\u^-_\Xi(U\u^-_\Xi),N)$ maps surjectively to $\Oblv^\g_{\l_\Xi}\circ\J^1_{\Xi,*}(N)$. This shows that the composition
    $$\Oblv^\g_{\l_\Xi}\circ\Ind_{\Xi,!*}\to\Oblv^\g_{\l_\Xi}\circ\Ind_{\Xi,*}\to\Res^*_\Xi\circ\Ind_{\Xi,*}\to\id$$
    is epic, i.e. the composition
    $$\Oblv^\g_{\l_\Xi}\circ\Ind_{\Xi,!*}\to\Res^*_\Xi\circ\Ind_{\Xi,!*}\to\Res^*_\Xi\circ\Ind_{\Xi,*}\to\id$$
    is epic. Consequently, the composition
    $$\Ind_{\Xi,!*}\to\Res^*_\Xi\circ\Ind_{\Xi,!*}\to\Res^*_\Xi\circ\Ind_{\Xi,*}\to\id$$
    is epic. Now we have two epis
    $$\id=\Res^*_\Xi\circ\Ind_{\Xi,!}\proj\Res^*_\Xi\circ\Ind_{\Xi,!*},\Res^*_\Xi\circ\Ind_{\Xi,!*}\proj\id$$
    whose composition is the identity. This shows that they are isomorphisms, and $\Res^*_\Xi\circ\Ind_{\Xi,!*}=\id$.

    Now $$\Res^{!*}_\Xi\circ\Ind_{\Xi,!*}=\im(\Res^!_\Xi\circ\Ind_{\Xi,!*}\to\Res^*_\Xi\circ\Ind_{\Xi,!*})=\id,$$
    we are done.
\end{proof}

\begin{remark}
\label{exactness-minimal}
    As we have seen, the functors $\Ind_{\Xi,!}$ and $\Ind_{\Xi,*}$ are exact. However, $\Ind_{\Xi,!*}$ is not exact in general. Let $N^\bullet$ be an acyclic cochain complex of $\l_\Xi$-modules, then we have a short exact sequence of cochain complexes
$$0\to\J^{-1}_{\Xi,!}(N^\bullet)\to\Ind_{\Xi,!}(N^\bullet)\to\Ind_{\Xi,!*}(N^\bullet)\to 0.$$
This induces a long exact sequence of cohomologies
$$\cdots\to\H^i(\Ind_{\Xi,!}(N^\bullet))\to\H^i(\Ind_{\Xi,!*}(N^\bullet))\to\H^{i+1}(\J^{-1}_{\Xi,!}(N^\bullet))\to\H^{i+1}(\Ind_{\Xi,!}(N^\bullet))\to\cdots.$$

Since $\Ind_{\Xi,!}$ is exact, the cochain complex $\Ind_{\Xi,!}(N^\bullet)$ is also acyclic, hence $\H^i(\Ind_{\Xi,!}(N^\bullet))=0$ for all $i$. So we get an isomorphism
$$\H^i(\Ind_{\Xi,!*}(N^\bullet))=\H^{i+1}(\J^{-1}_{\Xi,!}(N^\bullet)).$$
\end{remark}

From above discussion, we deduce the following useful Lemma.

\begin{lemma}
    \label{exact}
    Let $N^{-1}\xrightarrow{f}N^0\xrightarrow{g}N^1$ be an exact sequence of $\l_\Xi$-modules, then the induced sequence
    $$\Ind_{\Xi,!*}(N^{-1})\to\Ind_{\Xi,!*}(N^0)\to\Ind_{\Xi,!*}(N^1)$$
    is exact if the map $\J^{-1}_{\Xi,!}(g)\colon\J^{-1}_{\Xi,!}(N^0)\to\J^{-1}_{\Xi,!}(N^1)$ is surjective.
\end{lemma}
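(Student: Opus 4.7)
The plan is to repackage the hypothesis as the vanishing of a cohomology group, then read the claim off a long exact sequence. Place the complex $N^{-1}\xrightarrow{f}N^0\xrightarrow{g}N^1$ in cohomological degrees $-1,0,1$, and apply termwise the defining short exact sequence of additive functors
$$0\to\J^{-1}_{\Xi,!}\to\Ind_{\Xi,!}\to\Ind_{\Xi,!*}\to 0.$$
This produces a short exact sequence of three-term cochain complexes, and hence a standard long exact sequence in cohomology containing the segment
$$\H^0(\Ind_{\Xi,!}(N^\bullet))\to\H^0(\Ind_{\Xi,!*}(N^\bullet))\to\H^1(\J^{-1}_{\Xi,!}(N^\bullet))\to\H^1(\Ind_{\Xi,!}(N^\bullet)).$$

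Next I would show the two outer groups vanish. Since $\Ind_{\Xi,!}$ is exact and $N^{-1}\to N^0\to N^1$ is exact at $N^0$, applying $\Ind_{\Xi,!}$ preserves exactness at $\Ind_{\Xi,!}(N^0)$, so $\H^0(\Ind_{\Xi,!}(N^\bullet))=0$ (and similarly at the next spot if one wants the right-hand zero, although it is not needed for the injective arrow we use). On the other hand, the complex $\J^{-1}_{\Xi,!}(N^\bullet)$ has no outgoing differential from its rightmost term, so $\H^1(\J^{-1}_{\Xi,!}(N^\bullet))$ is simply the cokernel of $\J^{-1}_{\Xi,!}(g)\colon\J^{-1}_{\Xi,!}(N^0)\to\J^{-1}_{\Xi,!}(N^1)$, which is zero precisely by the surjectivity hypothesis. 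Caught between two vanishing terms, $\H^0(\Ind_{\Xi,!*}(N^\bullet))$ must vanish, which is exactly the exactness of $\Ind_{\Xi,!*}(N^{-1})\to\Ind_{\Xi,!*}(N^0)\to\Ind_{\Xi,!*}(N^1)$ at the middle term.

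This point of view makes the statement a local analogue of Remark~\ref{exactness-minimal}: one does not need the full complex $N^\bullet$ to be acyclic, only the single cohomology at $N^0$ to vanish, together with a controlled behaviour of $\J^{-1}_{\Xi,!}$. There is no genuine obstacle; the argument is essentially bookkeeping through the snake lemma, and the equivalent direct route would be a three-by-three diagram chase — lift $x\in\Ind_{\Xi,!*}(N^0)$ killed by $\Ind_{\Xi,!*}(g)$ to some $\tilde x\in\Ind_{\Xi,!}(N^0)$, observe that $\Ind_{\Xi,!}(g)(\tilde x)$ lies in $\J^{-1}_{\Xi,!}(N^1)$, use the hypothesis to subtract off a preimage $y\in\J^{-1}_{\Xi,!}(N^0)$, and finally invoke exactness of $\Ind_{\Xi,!}$ to write $\tilde x-y=\Ind_{\Xi,!}(f)(z)$. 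The only place the hypothesis enters, in either formulation, is in producing $y$, so it cannot be avoided.
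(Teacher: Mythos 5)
Your proof is correct and follows essentially the same route as the paper: both apply the short exact sequence of functors $0\to\J^{-1}_{\Xi,!}\to\Ind_{\Xi,!}\to\Ind_{\Xi,!*}\to0$ termwise and read off the claim from the resulting long exact sequence, using exactness of $\Ind_{\Xi,!}$ and the surjectivity hypothesis to kill the two flanking terms. The only cosmetic difference is that the paper first extends $N^\bullet$ to an acyclic complex so as to quote Remark~\ref{exactness-minimal}, whereas you work directly with the three-term complex.
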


\begin{proof}
    We extend the exact sequence to an acyclic complex
    $$N^\bullet=[0\to\ker(f)\to N^{-1}\to N^0\to N^1\to\coker(g)\to0]$$
    such that $N^i$ lives in cohomological degree $i$. Then from Remark~\ref{exactness-minimal}, we have
    $$\H^0(\Ind_{\Xi,!*}(N^\bullet))=\H^1(\J^{-1}_{\Xi,!}(N^\bullet))=0.$$

    Therefore, we conclude that
    $$\Ind_{\Xi,!*}(N^{-1})\to\Ind_{\Xi,!*}(N^0)\to\Ind_{\Xi,!*}(N^1)$$
    is exact.
\end{proof}

\section{Weight modules}
\label{weight-module}

In this section, we focus on weight modules.

\subsection{Generalities on weight modules}

Let $\a$ be a finite-dimensional abelian Lie algebra.

\begin{definition}
    An $\a$-module $M$ is called a \textit{weight module} if
$$M=\bigoplus_{\lambda\in\a^*}M_\lambda,\text{ where }M_\lambda=\{v\in M:a\cdot v=\lambda(a)v\text{ for all }a\in\a\}.$$
    Let us denote by $\a\wtMod$ the category of weight $\a$-modules. It has a full subcategory $\a\wtMod^\fin$ consisting of weight $\a$-modules of which each weight space is finite dimensional.
\end{definition}

The following proposition is well known (cf. \cite{kac1990infinite} Proposition 1.5).

\begin{proposition}
    \label{weight-abelian}
    Any submodule or quotient of a $\a$-weight module is also a weight module.
\end{proposition}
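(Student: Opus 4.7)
The plan is to show that for a submodule $N \subset M$ of a weight module $M = \bigoplus_\lambda M_\lambda$, any element $n \in N$ has all of its weight components (with respect to the ambient decomposition of $M$) lying in $N$. Once this is established, the submodule case is immediate, and the quotient case follows by a short diagram-chase showing that $M/N$ inherits the weight decomposition $\bigoplus_\lambda M_\lambda / (N \cap M_\lambda)$.

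For the key step, let $n \in N$ and write $n = \sum_{i=1}^k m_{\lambda_i}$ with $m_{\lambda_i} \in M_{\lambda_i}$ and the weights $\lambda_1, \dots, \lambda_k \in \a^*$ pairwise distinct. Since the $\lambda_i - \lambda_j$ for $i \neq j$ are nonzero linear functionals on the finite-dimensional space $\a$, their kernels are a finite collection of proper hyperplanes, and over the infinite field $\C$ such a collection does not exhaust $\a$. Pick $a \in \a$ outside all these hyperplanes, so that the scalars $\lambda_1(a), \dots, \lambda_k(a)$ are pairwise distinct.

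Now the vectors
\[
a^j \cdot n \;=\; \sum_{i=1}^k \lambda_i(a)^j\, m_{\lambda_i}, \qquad j = 0, 1, \dots, k-1,
\]
all lie in $N$. The coefficient matrix $(\lambda_i(a)^j)_{i,j}$ is Vandermonde in the distinct scalars $\lambda_i(a)$, hence invertible, so each $m_{\lambda_i}$ is a $\C$-linear combination of the $a^j \cdot n \in N$. Thus $m_{\lambda_i} \in N \cap M_{\lambda_i}$, which gives $N = \bigoplus_\lambda (N \cap M_\lambda)$.

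For the quotient statement, given a submodule $N \subset M$, the natural maps $M_\lambda \to M/N$ assemble into a surjection $\bigoplus_\lambda M_\lambda / (N \cap M_\lambda) \twoheadrightarrow M/N$, which is also injective because the weight-space decomposition of $M$ is a direct sum. The $\a$-action on each summand $M_\lambda/(N \cap M_\lambda)$ is through the character $\lambda$, so this is the desired weight decomposition of $M/N$. The only delicate point in the whole argument is the existence of the separating element $a \in \a$, and this is the step I would be most careful about; once the Vandermonde trick is in place, everything else is formal.
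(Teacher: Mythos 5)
Your proof is correct and follows essentially the same route as the paper's: choose $a\in\a$ separating the distinct weights (the paper does this via the nonvanishing of $\prod_{i<j}(\lambda_i-\lambda_j)$ in $\C[\a]$, you via avoiding finitely many hyperplanes --- the same fact), then invert the Vandermonde matrix to place each weight component in the submodule, and deduce the quotient case from the submodule case. No gaps.
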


\begin{proof}
    Let $M$ be a weight $\a$-module and $N\subset M$ be a submodule. For any $v\in N$, we can decompose $v$ as a finite sum $v=\sum_{j=1}^m v_j$, where $v_j\in M_{\lambda_j}$ and the weights $\lambda_1,\cdots,\lambda_m$ are distinct. The polynomial
    $$\prod_{1\leq i<j\leq m}(\lambda_i-\lambda_j)\in S\a^*=\C[\a]$$
    is nonzero, so we can find $a\in\a$ such that $\prod_{1\leq i<j\leq m}(\lambda_i-\lambda_j)(a)\neq0$. This means that $\lambda_1(a),\cdots,\lambda_m(a)$ are distinct.

    For $k=0,1,\cdots,m-1$, we have
    $$a^k\cdot v=\sum_{j=1}^m\lambda_j(a)^k v_j\in N.$$
    This is a system of linear equations associated to a nondegenerate matrix (the Vandermonde determinant does not vanish). Hence all $v_j$'s lie in $N$.

    Now let $f\colon M\proj L$ be a quotient. We know that $\ker f$ is a weight module, so the quotient $L$ is also a weight module.
\end{proof}

From Proposition~\ref{weight} above, we see that $\a\wtMod$ is an abelian category, and $\a\wtMod^\fin$ is a Serre subcategory.

\begin{definition}
    For an $\a$-weight module $M$, the \textit{support} of $M$ is
$$\supp(M):=\{\lambda\in\a^*:M_\lambda\neq0\}.$$
\end{definition}

\subsection{Weight Kac--Moody modules}
\label{km-weight}

Let $\g$ be a Kac--Moody algebra, with a fixed Cartan subalgebra $\h$. Let us fix a parabolic type $\Xi$ from now on in this section.

\begin{definition}
    We call a $\g$-module $M$ a \textit{weight module} if it is an $\h$-weight module. Categorically, the category of weight $\g$-modules, denoted by $\g\wtMod$, is the inverse image of $\h\wtMod$ under the forgetful functor $\g\Mod\xrightarrow{\operatorname{Oblv}^\g_\h}\h\Mod$. Moreover, we have the full subcategory $\g\wtMod^\fin$, consisting of weight $\g$-modules of which each weight space is finite dimensional, that is the inverse image of $\h\wtMod^\fin$ under $\operatorname{Oblv}^\g_\h$. We know that $\g\wtMod$ is an abelian category, with $\g\wtMod^\fin$ being a Serre subcategory.

    Similarly, we have the abelian category of weight $\l_\Xi$-modules $\l_\Xi\wtMod$ that is the inverse image of $\h\wtMod$ under the forgetful functor $\l_\Xi\Mod\xrightarrow{\operatorname{Oblv}^{\l_\Xi}_\h}\h\Mod$. It has a Serre subcategory $\l_\Xi\wtMod^\fin$ consisting of weight $\l_\Xi$-modules of which each weight space is finite dimensional.
\end{definition}

\begin{remark}
    From a homological algebra point of view, the category $\g\wtMod$ is a better object to study than $\g\wtMod^\fin$, because it has better homological properties. For example, it is a cocomplete symmetric monoidal category, while $\g\wtMod^\fin$ is not closed under arbitrary colimits and tensor products. Moreover, since $\g\wtMod^\fin$ is a Serre subcategory of $\g\wtMod$, many homological properties of $\g\wtMod^\fin$ can be recovered from those of $\g\wtMod$. For example, for two objects $N_1,N_2\in\g\wtMod^\fin$, the first extension group between them in $\g\wtMod^\fin$ is the same as the one in $\g\wtMod$.
\end{remark}

However, there is a novel construction on $\g\wtMod^\fin$ that cannot be performed on $\g\wtMod$, that is, the restricted duality functor presented below.

\begin{definition}
    Let $M\in\g\wtMod^\fin$ be a $\g$-weight module of which each weight space is finite dimensional. Let $M=\bigoplus_{\lambda\in\h^*}M_\lambda$ be the weight decomposition. The linear dual $M^*=\Hom(M,\C)$ is naturally a right $U\g$-module, can we make which into a left $U\g$-module via the transpose anti-involution $\tau$. Then the subspace
    $$M^\vee=\bigoplus_{\lambda\in\h^*}M_\lambda^*\subset M^*$$
    is a $\g$-submodule. We call $M^\vee$ the \textit{restricted dual} of $M$.
\end{definition}

The following properties are well-known (cf. \cite{humphreys2008representations}, \cite{kac1990infinite}).

\begin{proposition}
    \begin{enumerate}
        \item The operation of taking restricted dual defines a contravariant functor $$^\vee\colon\g\wtMod^\fin\to\g\wtMod^\fin.$$
        \item The functor $^\vee$ is an anti-involution, i.e. ${^\vee}{^\vee}=\id$.
        \item The restricted duality functor $^\vee$ is exact. Moreover, it defines an anti-equivalence of abelian categories.
        \item The simple highest weight modules are self-dual, i.e. $L(\lambda)^\vee\simeq L(\lambda)$ for each $\lambda\in\h^*$. 
    \end{enumerate}
\end{proposition}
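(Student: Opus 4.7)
The plan is to verify the four claims in order, with everything bootstrapped from a single observation about how the $\tau$-twisted $\g$-action on $M^*$ interacts with the weight decomposition.

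First I verify the core identification $(M^\vee)_\lambda = M_\lambda^*$. Since $\tau$ fixes $\h$ pointwise, for a functional $f \in M^*$ supported on the weight component $M_\lambda$ and any $h \in \h$, a direct computation $(h\cdot f)(m) = f(\tau(h)\cdot m) = f(h\cdot m)$ shows that $f$ is itself a weight vector of weight $\lambda$ under the twisted action. Conversely, the finite dimensionality of each $M_\mu$ makes $\bigoplus_\mu M_\mu^* \subset M^*$ a $\g$-submodule (so the $\g$-action on the restricted dual is well-defined), and its weight-space decomposition picks out exactly the $M_\lambda^*$. With this identification, part (1) reduces to the fact that any $\g$-map $f\colon M\to N$ preserves weight spaces, so its linear transpose $N^*\to M^*$ restricts to $N^\vee\to M^\vee$; $\g$-linearity is formal from $\tau$ being an anti-involution. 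Part (2) follows by taking double duals weight-by-weight: each $(M_\lambda^*)^* \cong M_\lambda$ canonically because of finite dimensionality, and these isomorphisms assemble into a natural $\g$-equivariant isomorphism $(M^\vee)^\vee \cong M$.

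For part (3), I appeal to Proposition~\ref{weight-abelian} to reduce exactness in $\g\wtMod^\fin$ to exactness on each $\h$-weight space, where linear duality of finite-dimensional spaces is exact. The anti-equivalence of categories then follows formally from (1) together with (2): an exact contravariant functor whose square is naturally isomorphic to the identity is automatically an anti-equivalence.

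Part (4) is the substantive one, but even it is mostly formal once the preceding parts are in place. The weight-space identification gives $\supp(L(\lambda)^\vee) = \supp(L(\lambda))$ with the same multiplicities, so $L(\lambda)^\vee$ has a one-dimensional weight space at $\lambda$, no strictly higher weights, and any nonzero vector of weight $\lambda$ is thus annihilated by $\n^+$ and generates a nonzero highest weight submodule. To conclude $L(\lambda)^\vee \cong L(\lambda)$ it suffices to show simplicity of $L(\lambda)^\vee$: a nonzero proper submodule would, after applying $^\vee$ and invoking (2) and (3), correspond to a nonzero proper quotient of $L(\lambda)$, contradicting its simplicity. The standard uniqueness of simple highest weight modules then closes the argument. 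The main point requiring care in the whole proof, I expect, is the compatibility check in the first paragraph --- namely that the $\tau$-twist really does land weight spaces of $M^*$ where claimed, and interacts correctly with submodule/quotient formation; beyond that, everything is essentially linear algebra over the weight grading.
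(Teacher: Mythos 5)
The paper gives no proof of this proposition --- it is stated as well-known with references to Humphreys and Kac --- so there is nothing internal to compare against; your argument is the standard one and is correct and complete. One small point of precision: in part (1) the fact that $\bigoplus_\mu M_\mu^*$ is a $\g$-submodule of $M^*$ comes from the weight-shift property of the $\tau$-twisted action of root vectors (which you do verify implicitly), not from the finite-dimensionality of the $M_\mu$; finite-dimensionality is what you actually need for the functor to land in $\g\wtMod^{\mathrm{fin}}$ and for the double-dual identification in part (2).
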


As an easy corollary, we have

\begin{corollary}
    For any $N_1,N_2\in\g\wtMod^\fin\subset\g\wtMod$, there is a canonical isomorphism of Yoneda extension groups
    $$\Ext^n_{\g\wtMod}(N_1,N_2)=\Ext^n_{\g\wtMod}(N_2^\vee,N_1^\vee)$$
    for any $n\geq0$.
\end{corollary}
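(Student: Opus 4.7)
The plan is to pass through the Serre subcategory $\g\wtMod^{\fin}$, on which the restricted duality $^\vee$ is defined, and to transport Yoneda extensions back to the ambient category $\g\wtMod$. Concretely, I would split the argument into a comparison step (that Yoneda $\Ext^n$ in $\g\wtMod$ between objects of $\g\wtMod^{\fin}$ agrees with Yoneda $\Ext^n$ in $\g\wtMod^{\fin}$) and a duality step (that an exact contravariant auto-equivalence reverses Yoneda $n$-extensions).

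For the comparison step, I would verify that for $M_1, M_2 \in \g\wtMod^{\fin}$ the natural map $\Ext^n_{\g\wtMod^{\fin}}(M_1, M_2) \to \Ext^n_{\g\wtMod}(M_1, M_2)$ induced by the inclusion is an isomorphism. The case $n = 1$ is the Serre subcategory observation already used in the preceding Remark: since $\g\wtMod^{\fin}$ is closed under subobjects, quotients, and extensions in $\g\wtMod$, any short exact sequence in $\g\wtMod$ with endpoints in $\g\wtMod^{\fin}$ lies entirely inside $\g\wtMod^{\fin}$. For $n \geq 2$, given a Yoneda $n$-extension $0 \to M_2 \to E_n \to \cdots \to E_1 \to M_1 \to 0$ in $\g\wtMod$, I would factor it into short exact sequences $0 \to B_{i+1} \to E_{i+1} \to B_i \to 0$ with $B_0 = M_1$ and $B_n = M_2$, and then invoke the Serre property iteratively (together with the standard zig-zag description of Yoneda equivalence) to produce a representative of the same class with all middle terms in $\g\wtMod^{\fin}$.

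For the duality step, I would apply $^\vee$ term by term to a Yoneda $n$-extension
$$\xi\colon 0 \to N_2 \to E_n \to \cdots \to E_1 \to N_1 \to 0$$
inside $\g\wtMod^{\fin}$, obtaining the reversed Yoneda $n$-extension
$$\xi^\vee\colon 0 \to N_1^\vee \to E_1^\vee \to \cdots \to E_n^\vee \to N_2^\vee \to 0.$$
Since $^\vee$ is exact, contravariant, and involutive ($^{\vee\vee} = \id$), this assignment is a well-defined bijection on equivalence classes, producing the canonical isomorphism $\Ext^n_{\g\wtMod^{\fin}}(N_1, N_2) \simeq \Ext^n_{\g\wtMod^{\fin}}(N_2^\vee, N_1^\vee)$. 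Combining this with the comparison isomorphism on both sides yields the claim.

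The main obstacle is the $n \geq 2$ case of the comparison step: refining a Yoneda $n$-extension in $\g\wtMod$ to an equivalent one inside $\g\wtMod^{\fin}$ requires some bookkeeping with the cycle filtration $B_\bullet$, but is ultimately a formal consequence of the closure of $\g\wtMod^{\fin}$ under subobjects, quotients, and extensions; equivalently, it is the well-known fact that for such a thick subcategory the induced functor $\D^b(\g\wtMod^{\fin}) \to \D^b(\g\wtMod)$ is fully faithful.
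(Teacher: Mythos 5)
The paper records no argument for this corollary (it is labelled ``an easy corollary'' of the listed properties of $^\vee$), so what follows assesses your proposal on its own terms. Your duality step is fine, and your comparison step is fine for $n=0,1$ (fullness of the subcategory, resp.\ closure under extensions). The gap is the comparison step for $n\geq 2$. It is \emph{not} a formal consequence of $\g\wtMod^\fin$ being a Serre subcategory of $\g\wtMod$ that $\Ext^n_{\g\wtMod^\fin}(M_1,M_2)\to\Ext^n_{\g\wtMod}(M_1,M_2)$ is an isomorphism, and the ``well-known fact'' you invoke --- that $\D^b(\mathcal{B})\to\D^b_{\mathcal{B}}(\mathcal{A})$ is fully faithful for any Serre subcategory $\mathcal{B}$ --- is false in general. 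The standard sufficient criterion requires, e.g., that every epimorphism $X\proj Y$ in $\g\wtMod$ with $Y\in\g\wtMod^\fin$ admit a submodule $X'\subset X$ with $X'\in\g\wtMod^\fin$ still surjecting onto $Y$. This is not obvious here: the natural candidate, the submodule generated by weight-vector lifts of bases of the $Y_\lambda$, need not have finite-dimensional weight spaces, since already a cyclic weight module such as $U\g\otimes_{U\h}\C_\lambda$ has infinite-dimensional weight spaces (the weight-$\lambda$ space contains the images of all $(U\n^-)_{-\beta}(U\n^+)_{\beta}$). So your zig-zag refinement of the cycle filtration $B_\bullet$ does not go through without a genuinely new input.

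The cleaner route, which avoids the comparison entirely and is presumably why the paper calls this easy, is to extend $^\vee$ to all of $\g\wtMod$ by setting $M^\vee=\bigoplus_{\lambda}(M_\lambda)^*\subset M^*$ with the $\tau$-twisted action; one checks $(M^\vee)_\lambda=(M_\lambda)^*$, so $M^\vee$ is again a weight module, and this $^\vee\colon\g\wtMod\to\g\wtMod$ is a contravariant exact functor (degreewise linear duality), though no longer involutive in general. Applying it term by term to a Yoneda $n$-extension of $N_1$ by $N_2$ \emph{inside} $\g\wtMod$ produces a Yoneda $n$-extension of $N_2^\vee$ by $N_1^\vee$ in $\g\wtMod$, compatibly with Yoneda equivalence and Baer sums. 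Applying it twice, the canonical map from an extension $\xi$ to $\xi^{\vee\vee}$ is a morphism of $n$-extensions which is the identity on the two end terms because $N_1,N_2\in\g\wtMod^\fin$; hence $[\xi^{\vee\vee}]=[\xi]$, the two induced maps on $\Ext^n_{\g\wtMod}$ are mutually inverse, and the corollary follows with no subcategory comparison needed. I recommend you either adopt this argument or restrict your comparison step to the cases $n\leq 1$ actually used elsewhere in the paper.
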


Repeating above constructions word-by-word, we can define the restricted duality functor $$^\vee\colon\l_\Xi\wtMod^\fin\to\l_\Xi\wtMod^\fin$$
satisfying the same properties.

\subsection{Weight modules under parabolic induction}

\begin{proposition}
    The functors $\Ind_{\Xi,!},\Ind_{\Xi,!*}$ and $\J^{-1}_{\Xi,!}$ restrict to functors
    $$\Ind_{\Xi,!}\colon\l_\Xi\wtMod\to\g\wtMod,$$
    $$\Ind_{\Xi,!*}\colon\l_\Xi\wtMod\to\g\wtMod,$$
    $$\J^{-1}_{\Xi,!}\colon\l_\Xi\wtMod\to\g\wtMod.$$
\end{proposition}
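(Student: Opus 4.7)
The plan is to establish the claim first for $\Ind_{\Xi,!}$ by a direct PBW calculation, and then deduce it for $\J^{-1}_{\Xi,!}$ and $\Ind_{\Xi,!*}$ formally by invoking the closure of weight modules under submodules and quotients (Proposition~\ref{weight-abelian}).

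For $\Ind_{\Xi,!}$, I would work with the vector-space identification $\Ind_{\Xi,!}(N)=U\u^-_\Xi\otimes N$ coming from the PBW decomposition $U\g=U\u^-_\Xi\otimes U\p^+_\Xi$. The adjoint $\h$-action equips $U\u^-_\Xi$ with a weight grading $U\u^-_\Xi=\bigoplus_\beta(U\u^-_\Xi)_\beta$, where $\beta$ ranges over the submonoid of $\h^*$ generated by $\Delta^-\backslash\Delta^-_\Xi$; tensoring with the weight decomposition $N=\bigoplus_\nu N_\nu$ of $N\in\l_\Xi\wtMod$ yields a candidate vector-space decomposition
$$\Ind_{\Xi,!}(N)=\bigoplus_\lambda\Bigl(\bigoplus_{\beta+\nu=\lambda}(U\u^-_\Xi)_\beta\otimes N_\nu\Bigr).$$
The key verification is that the $\h$-action inherited from the $\g$-module structure on $\Ind_{\Xi,!}(N)$ acts by the scalar $\lambda(h)$ on the $\lambda$-summand. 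This reduces to the one-line identity $h\cdot(X\otimes n)=[h,X]\otimes n+X\otimes h\cdot n$, which is valid because $\h\subset\p^+_\Xi$ may be moved across $\otimes_{U\p^+_\Xi}$ into $N$.

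Once $\Ind_{\Xi,!}(N)$ is known to lie in $\g\wtMod$, the remaining two statements are immediate: $\J^{-1}_{\Xi,!}(N)$ is by construction a $\g$-submodule of $\Ind_{\Xi,!}(N)$, and $\Ind_{\Xi,!*}(N)$ is a $\g$-quotient of $\Ind_{\Xi,!}(N)$, so Proposition~\ref{weight-abelian} places both in $\g\wtMod$. On morphisms there is nothing extra to check, since $\Ind_{\Xi,!}$, $\Ind_{\Xi,!*}$, and $\J^{-1}_{\Xi,!}$ were originally defined on all of $\l_\Xi\Mod$, and the inclusion $\l_\Xi\wtMod\subset\l_\Xi\Mod$ is full.

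I do not anticipate any genuine obstacle: the only content is the bookkeeping of weights in the PBW identification, and once that is in place the remaining functors inherit the property from $\Ind_{\Xi,!}$ by abstract nonsense. A mildly useful byproduct of the argument is an explicit description of the set of weights appearing in $\Ind_{\Xi,!}(N)$ in terms of $\supp(N)$ and the monoid spanned by $\Delta^-\backslash\Delta^-_\Xi$, which (via Observation~\ref{weight}) will be exploited in later arguments but is not logically required for the present statement.
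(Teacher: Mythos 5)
Your proposal is correct and takes essentially the same route as the paper: both arguments rest on the PBW identification $\Ind_{\Xi,!}(N)=U\u^-_\Xi\otimes N$ being an isomorphism of $\h$-modules (so a tensor product of weight modules, hence a weight module), and then handle $\J^{-1}_{\Xi,!}$ and $\Ind_{\Xi,!*}$ as a submodule and a quotient via Proposition~\ref{weight-abelian}. You merely spell out the weight bookkeeping that the paper leaves implicit.
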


\begin{proof}
    For $N\in\l_\Xi\Mod$, the isomorphism $\Ind_{\Xi,!}(N)=U\u^-_\Xi\otimes N$ is not only an isomorphism of vector spaces, but it also preserves the $\h$-module structure. Here the right hand side is understood as the tensor product of $\h$-modules. Therefore, we see that if $N\in\l_\Xi\wtMod$, then $\Ind_{\Xi,!}(N)=U\u^-_\Xi\otimes N$ lives in $\g\wtMod$.

    As a quotient (resp. sub), we see that $\Ind_{\Xi,!*}$ (resp. $\J^{-1}_{\Xi,!}$) also maps weight modules to weight modules.
\end{proof}

\begin{remark}
    Unlike the parabolic $!$-induction $\Ind_{\Xi,!}$, the parabolic $*$-induction $\Ind_{\Xi,*}$ does not map a weight $\l_\Xi$-module to a weight $\g$-module in general (instead of direct sum of weight spaces, it is direct product of weight spaces). Though our construction in the previous section is completely ``dualizable'', asymmetric phenomena may happen when focusing on weight modules.
\end{remark}

\begin{proposition}
    The functors $\Res^!_\Xi,\Res^*_\Xi$ and $\Res^{!*}_\Xi$ restrict to functors
    $$\Res^!_\Xi\colon\g\wtMod\to\l_\Xi\wtMod,$$
    $$\Res^*_\Xi\colon\g\wtMod\to\l_\Xi\wtMod,$$
    $$\Res^{!*}_\Xi\colon\g\wtMod\to\l_\Xi\wtMod,$$
    $$\Res^!_\Xi\colon\g\wtMod^\fin\to\l_\Xi\wtMod^\fin,$$
    $$\Res^*_\Xi\colon\g\wtMod^\fin\to\l_\Xi\wtMod^\fin,$$
    $$\Res^{!*}_\Xi\colon\g\wtMod^\fin\to\l_\Xi\wtMod^\fin.$$
\end{proposition}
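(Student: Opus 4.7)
The plan is to verify each of the six claimed restrictions, all by reducing to the single observation that the subalgebras $\u^\pm_\Xi$ are direct sums of $\h$-root spaces $\g_\alpha$, so their action on any $\h$-weight module shifts weights by the corresponding root $\alpha$. Since the $\l_\Xi$-action on each output is already part of the original definitions of the three restriction functors, the only thing to check is that the outputs are weight modules as $\h$-modules, and that finite-dimensionality of weight spaces is preserved.

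For $\Res^!_\Xi(M) = \Hom_{U\u^+_\Xi}(\C,M) = M^{\u^+_\Xi}$, I would take a weight module $M = \bigoplus_\lambda M_\lambda$ and an element $v = \sum_\lambda v_\lambda$ with $v_\lambda \in M_\lambda$. Applying a homogeneous $x \in \g_\alpha \subset \u^+_\Xi$ gives $xv = \sum_\lambda x v_\lambda$ with summands in distinct weight spaces $M_{\lambda+\alpha}$, so $xv = 0$ forces $x v_\lambda = 0$ for every $\lambda$. Since $\u^+_\Xi$ is spanned by such homogeneous elements, $v \in M^{\u^+_\Xi}$ if and only if each $v_\lambda \in M^{\u^+_\Xi}$. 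This exhibits $M^{\u^+_\Xi}$ as a weight submodule with $(M^{\u^+_\Xi})_\lambda = M^{\u^+_\Xi} \cap M_\lambda \subset M_\lambda$, from which finite-dimensionality of weight spaces is inherited.

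For $\Res^*_\Xi(M) = M/\u^-_\Xi M$, the same observation shows that $\u^-_\Xi M$ is an $\h$-invariant subspace, since it is spanned by elements of the form $xv$ with $x \in \g_\alpha$ and $v \in M_\mu$, each of which lies in $M_{\mu+\alpha}$. By Proposition~\ref{weight-abelian} applied to $M$ as an $\h$-weight module, the quotient $M/\u^-_\Xi M$ is again a weight module, with $(\Res^*_\Xi M)_\lambda = M_\lambda / (\u^-_\Xi M)_\lambda$ a quotient of $M_\lambda$, so finite weight spaces remain finite. For $\Res^{!*}_\Xi$, note that the natural transformation $\Res^!_\Xi \to \Res^*_\Xi$ factors through $\Oblv^\g_{\l_\Xi}$ and is in particular a morphism of weight $\l_\Xi$-modules, so its image (simultaneously a submodule of $\Res^*_\Xi M$ and a quotient of $\Res^!_\Xi M$) is itself a weight module by Proposition~\ref{weight-abelian}. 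Again $(\Res^{!*}_\Xi M)_\lambda$ is a subquotient of $M_\lambda$, so finite-dimensionality of each weight space is preserved.

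I do not expect any serious obstacle here. The entire statement is a mechanical verification once one recognizes that the set-theoretic operations involved, namely invariants, coinvariants, and the image of a weight-compatible map, all respect any $\h$-weight grading, and this is automatic because $\u^\pm_\Xi$ are themselves $\h$-weight subspaces.
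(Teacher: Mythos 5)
Your proposal is correct and follows essentially the same route as the paper: the paper simply notes that $\Res^!_\Xi(M)$ is an $\l_\Xi$-submodule and $\Res^*_\Xi(M)$ an $\l_\Xi$-quotient of $\Oblv^\g_{\l_\Xi}(M)$, invokes the fact that submodules and quotients of weight modules are weight modules, and treats $\Res^{!*}_\Xi$ as the image of a map of weight modules. Your explicit verification that $M^{\u^+_\Xi}$ and $\u^-_\Xi M$ are $\h$-graded (via homogeneity of $\u^\pm_\Xi$) just unwinds the step the paper leaves implicit, and the weight-space containments you record correctly handle the finite-dimensionality claim.
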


\begin{proof}
    The forgetful functor $\Oblv^\g_{\l_\Xi}\colon\g\Mod\to\l_\Xi\Mod$ preserves weight spaces. Therefore, as a sub (resp. quotient), $\Res^!_\Xi$ (resp. $\Res^*_\Xi$) can be restricted. Thus $\Res^{!*}_\Xi=\im(\Res^!_\Xi\to\Res^*_\Xi)$ can be restricted.
\end{proof}

\subsection{Simple weight modules}

\begin{definition}
    For $\xi\in\h^*$, an $\l_\Xi$-weight module $N\in\l_\Xi\wtMod$ is called \textit{$\xi$-shifted}, if $\supp(N)\subset\xi+Q_\Xi$.\footnote{Recall that $Q_\Xi$ is the root lattice of $\l_\Xi$.} Let us denote by $\l_\Xi\wtMod_\xi$ the full subcategory of $\l_\Xi\wtMod$ consisting of $\xi$-shifted weight modules.
\end{definition}

It is easy to see that we have a decomposition of abelian category:

\begin{proposition}
    If $\xi\equiv\xi'\pmod {Q_\Xi}$, then $\l_\Xi\wtMod_\xi=\l_\Xi\wtMod_{\xi'}$. There is a direct sum decomposition of abelian category
    $$\l_\Xi\wtMod=\bigoplus_{\xi\in\h^*/Q_\Xi}\l_\Xi\wtMod_\xi.$$
    Here $\xi$ runs through (a choice of representatives of) the coset space $\h^*/Q_\Xi$.
\end{proposition}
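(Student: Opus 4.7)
The first assertion is immediate from the definition of $\l_\Xi\wtMod_\xi$: the condition $\supp(N)\subset\xi+Q_\Xi$ only depends on the coset $\xi+Q_\Xi\in\h^*/Q_\Xi$, so $\xi\equiv\xi'\pmod{Q_\Xi}$ forces $\l_\Xi\wtMod_\xi=\l_\Xi\wtMod_{\xi'}$. The substance of the proposition is therefore in the direct sum decomposition.

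The plan is to construct the decomposition on each object and then check that morphisms respect it. Given $N\in\l_\Xi\wtMod$, partition the support of $N$ according to cosets of $Q_\Xi$ in $\h^*$: for each coset $\bar\xi=\xi+Q_\Xi$, set
$$N_{\bar\xi}:=\bigoplus_{\mu\in\xi+Q_\Xi}N_\mu\subset N.$$
As a sum of weight subspaces this is automatically $\h$-stable, and clearly $N=\bigoplus_{\bar\xi\in\h^*/Q_\Xi}N_{\bar\xi}$ as an $\h$-weight module. The key point is that each $N_{\bar\xi}$ is in fact an $\l_\Xi$-submodule. This uses the root decomposition
$$\l_\Xi=\h\oplus\bigoplus_{\alpha\in\Delta_\Xi}\g_\alpha,$$
together with the observation that $\Delta_\Xi\subset Q_\Xi$: if $v\in N_\mu$ and $x\in\g_\alpha$ with $\alpha\in\Delta_\Xi$, then $x\cdot v\in N_{\mu+\alpha}$, and $\mu+\alpha\in\xi+Q_\Xi$ whenever $\mu\in\xi+Q_\Xi$. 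Thus $\l_\Xi\cdot N_{\bar\xi}\subset N_{\bar\xi}$, and by construction $N_{\bar\xi}\in\l_\Xi\wtMod_\xi$.

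For morphisms, suppose $f\colon N\to N'$ is a map of weight $\l_\Xi$-modules with $N\in\l_\Xi\wtMod_\xi$ and $N'\in\l_\Xi\wtMod_{\xi'}$. Since $f$ preserves weight spaces and $\supp(N)\subset\xi+Q_\Xi$ while $\supp(N')\subset\xi'+Q_\Xi$, if $\xi\not\equiv\xi'\pmod{Q_\Xi}$ these supports are disjoint and $f=0$. Consequently the assignment $N\mapsto(N_{\bar\xi})_{\bar\xi\in\h^*/Q_\Xi}$ on objects is functorial and yields mutually inverse equivalences between $\l_\Xi\wtMod$ and $\bigoplus_{\bar\xi\in\h^*/Q_\Xi}\l_\Xi\wtMod_\xi$, proving the direct sum decomposition of abelian categories.

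There is no real obstacle here — everything reduces to the single observation $\Delta_\Xi\subset Q_\Xi$, which guarantees that the $\l_\Xi$-action cannot move a weight out of its $Q_\Xi$-coset. The only minor point to be careful about is distinguishing between the internal direct sum $N=\bigoplus N_{\bar\xi}$ (valid for each individual object) and the direct sum decomposition of the abelian category itself (which additionally requires the Hom-orthogonality established above); both follow from the same root-lattice consideration.
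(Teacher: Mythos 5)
Your proof is correct. The paper offers no argument for this proposition (it is introduced with ``It is easy to see''), and your write-up supplies exactly the standard details that are being left implicit: the coset-wise grouping of weight spaces is $\l_\Xi$-stable because $\Delta_\Xi\subset Q_\Xi$, and Hom-orthogonality between distinct blocks follows since morphisms of weight modules preserve weights.
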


\begin{corollary}
    For any simple (hence indecomposable) weight $\l_\Xi$-module $N\in\l_\Xi\wtMod$, there exists $\xi\in\h^*$ such that $N\in\l_\Xi\wtMod_\xi$.
\end{corollary}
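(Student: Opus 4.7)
The plan is to deduce this immediately from the direct sum decomposition of abelian categories
$$\l_\Xi\wtMod=\bigoplus_{\xi\in\h^*/Q_\Xi}\l_\Xi\wtMod_\xi$$
established in the preceding proposition. A simple object in a direct sum of abelian categories must live entirely in one summand, and I will verify this in our concrete situation.

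Concretely, I would start with any $N\in\l_\Xi\wtMod$ and for each coset $\xi+Q_\Xi\in\h^*/Q_\Xi$ set
$$N^{(\xi)}=\bigoplus_{\lambda\in\xi+Q_\Xi}N_\lambda.$$
Then I would check that $N^{(\xi)}$ is an $\l_\Xi$-submodule: since $\l_\Xi=\h\oplus\bigoplus_{\alpha\in\Delta_\Xi}\g_\alpha$ and $\Delta_\Xi\subset Q_\Xi$, each root vector in $\l_\Xi$ shifts weights by an element of $Q_\Xi$, so it preserves the coset $\xi+Q_\Xi$. This gives a canonical decomposition $N=\bigoplus_\xi N^{(\xi)}$ as $\l_\Xi$-modules, which is exactly the assertion of the preceding proposition on the object level.

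Now assume $N$ is simple. Each $N^{(\xi)}$ is an $\l_\Xi$-submodule of $N$, hence either $0$ or $N$. Since $N\neq0$, at least one coset $\xi+Q_\Xi$ has $N^{(\xi)}=N$; and since the sum is direct, this coset is unique. Choosing any representative $\xi$ of this coset gives $\supp(N)\subset\xi+Q_\Xi$, i.e. $N\in\l_\Xi\wtMod_\xi$.

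There is essentially no obstacle here; the only thing to be careful about is to argue the direct-sum decomposition on the level of $\l_\Xi$-modules (not merely $\h$-modules), which is precisely where Observation~\ref{weight}-style reasoning enters — namely that $\l_\Xi$-root vectors shift weights only within a coset of $Q_\Xi$. Once that is in place the corollary is a one-line consequence of simplicity.
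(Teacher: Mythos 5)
Your argument is correct and is exactly the reasoning the paper intends: the corollary is drawn as an immediate consequence of the direct sum decomposition $\l_\Xi\wtMod=\bigoplus_{\xi}\l_\Xi\wtMod_\xi$, with simplicity forcing $N$ into a single summand. Your explicit verification that each $N^{(\xi)}$ is an $\l_\Xi$-submodule (because $\Delta_\Xi\subset Q_\Xi$) just spells out the content of the preceding proposition, so there is nothing to add.
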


\begin{remark}
    \label{max-new}
    Let $N\in\l_\Xi\wtMod_\xi$ be a $\xi$-shifted $\l_\Xi$-weight module for some $\xi\in\h^*$. In this situation, for any $\g$-submodule $M$ of $\Ind_{\Xi,!}(N)=U\u^-_\Xi\otimes N$, we have, by Observation~\ref{weight}, that $M$ lies in $\u^-_\Xi(U\u^-_\Xi)\otimes N$ if and only if $M$ has zero intersection with $1\otimes N$. Therefore, in the sense of Remark~\ref{max}, $\J^{-1}_{\Xi,!}(N)$ is the maximal $\g$-submodule of $\Ind_{\Xi,!}(N)=U\u^-_\Xi\otimes N$ that has zero intersection with $1\otimes N$.
\end{remark}

\begin{proposition}
    \label{res-simple}
    The functors $\Res^!_\Xi,\Res^*_\Xi$ and $\Res^{!*}_\Xi$ map a simple $\g$-weight module to a simple $\l_\Xi$-weight module or $0$.
\end{proposition}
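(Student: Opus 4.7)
The plan is to treat the three functors in sequence: $\Res^!_\Xi$ by a direct weight/Nakayama argument, $\Res^*_\Xi$ by a mirror argument on quotients, and $\Res^{!*}_\Xi$ as a formal consequence. For $\Res^!_\Xi$, set $V := L^{\u^+_\Xi}$ and assume $V \neq 0$. Since $L$ is a simple weight module it is supported in a single coset of $\h^*/Q$, so I can decompose $L = \bigoplus_{\eta \in Q/Q_\Xi} L^\eta$ (each piece $\l_\Xi$-stable), inducing $V = \bigoplus V^\eta$. First sub-step: $V$ is supported on a single coset. If $V^{\eta_1}, V^{\eta_2} \neq 0$ with $\eta_1 \neq \eta_2$, pick weight-homogeneous $v_i \in V^{\eta_i}$; by simplicity of $L$ we have $v_2 \in U\g v_1$, and using $\u^+_\Xi v_1 = 0$ plus PBW, $v_2 \in U\u^-_\Xi U\l_\Xi v_1$. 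Comparing weights modulo $Q_\Xi$ puts $\eta_2 - \eta_1$ in the image of the monoid of Observation~\ref{weight}; symmetry and the observation force $\eta_1 = \eta_2$. Second sub-step: $V$ is simple. Given a nonzero $\l_\Xi$-submodule $W \subset V$, simplicity of $L$ gives $L = U\u^-_\Xi W$; for $v \in V$ weight-homogeneous, the expansion $v = \sum X_j w_j$ has all $X_j$ scalar by the same observation, so $v \in W$.

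For $\Res^*_\Xi$, set $V := L/\u^-_\Xi L$ with projection $\pi$, and assume $V \neq 0$. The argument mirrors the previous one. For the single-coset step, lift $\bar v_i \in V^{\eta_i}$ to $v_i \in L^{\eta_i}$ with $\pi(v_i) \neq 0$; simplicity gives $v_1 = Z v_2$ for some weight-homogeneous $Z$, expanded by PBW as $\sum X_j h_j Y_j$. Terms with $X_j$ in the augmentation ideal of $U\u^-_\Xi$ land in $\u^-_\Xi L = \ker \pi$ and die; among the surviving $X_j \in \C$ terms, the weight classes mod $Q_\Xi$ force $\eta_1 - \eta_2$ to lie in the image of the monoid spanned by $\Delta^+ \setminus \Delta^+_\Xi$, and by symmetry plus the positive version of Observation~\ref{weight}, $\eta_1 = \eta_2$. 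Once $V \subset V^{\eta_0}$, one has $L^\eta \subset \u^-_\Xi L$ for $\eta \neq \eta_0$; for simplicity, take $\bar v \neq 0$ and arbitrary $\bar v' \in V$ with lifts $v, v' \in L^{\eta_0}$, and expand $v' = \sum X_j h_j Y_j v$. Non-scalar $X_j$ pieces die as before, and among the surviving $X_j \in \C$ terms, non-scalar $Y_j$ push $v$ into some $L^{\eta'}$ with $\eta' \neq \eta_0$, hence into $\u^-_\Xi L$, also killed by $\pi$. What remains is $\bar v' \in U\l_\Xi \bar v$, so $V$ is simple.

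For $\Res^{!*}_\Xi$, by definition it is the image of $\Res^!_\Xi(L) \to \Res^*_\Xi(L)$. Since $\Res^!_\Xi(L)$ is simple or zero by the first case, the kernel of this map is either $0$ or all of $\Res^!_\Xi(L)$, hence the image is either $\Res^!_\Xi(L)$ (simple) or zero. The main obstacle is the single-coset step for $\Res^*_\Xi$, since $\pi$ is not $\u^\pm_\Xi$-equivariant and one must carefully track which PBW pieces of $Z$ survive the projection; once this is in place, the rest runs by the same pattern as the $\Res^!_\Xi$ case, with Observation~\ref{weight} supplying the key numerical input in both directions.
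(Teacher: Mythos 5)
Your proposal is correct and takes essentially the same route as the paper: both arguments run on the PBW decomposition $U\g=U\u^-_\Xi\otimes U\l_\Xi\otimes U\u^+_\Xi$, the cyclicity of the simple module $L$, and Observation~\ref{weight} (in its negative form for $\Res^!_\Xi$ and its positive mirror for $\Res^*_\Xi$), with $\Res^{!*}_\Xi$ handled as the image of a simple-or-zero module. Your packaging into a ``single coset'' step followed by a ``simplicity'' step is a slightly more careful organization of the paper's ``$v_1=X_1X_2.v_1$ forces $X_1,X_2\in U\l_\Xi$'' argument, but the content is the same.
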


\begin{proof}
    Let $M$ be a simple $\g$-weight module. Suppose that $\Res^!_\Xi(M)\neq0$. Pick any nonzero weight vectors $v_1,v_2\in\Res^!_\Xi(M)\subset\Oblv^\g_{\l_\Xi}(M)$. By the simplicity of $M$, we have $M=U\g.v_1=U\g.v_2$. Noting that $v_i\in\Res^!_\Xi(M)=\Hom_{U\u^+_\Xi}(\C,M)$, we have $\u^+_\Xi.v_i=0$ for $i=1,2$. So by the PBW theorem,
    $$M=U\g.v_i=(U\u^-_\Xi\otimes U\l_\Xi\otimes U\u^+_\Xi).v_i=(U\u^-_\Xi\otimes U\l_\Xi).v_i.$$
    In particular, we can find $X_1,X_2\in U\u^-_\Xi\otimes U\l_\Xi$ such that $v_1=X_1.v_2,v_2=X_2.v_1$. Now we have $v_1=X_1X_2.v_1$, so the weight of $X_1X_2$ is $0$. By Observation~\ref{weight}, we see that $X_1,X_2\in1\otimes U\l_\Xi=U\l_\Xi$, so $v_1$ and $v_2$ lie in the same $\l_\Xi$-submodule of $\Res^!_\Xi(M)$. This verifies the simplicity of $\Res^!_\Xi(M)$.

    Dually, let $M'$ be a simple $\g$-weight module such that $\Res^*_\Xi(M')\neq0$. Pick any nonzero weight vectors $\bar{w}_1,\bar{w}_2\in\Res^*_\Xi(M')$. Let $w_i$ be a weight vector in $\Oblv^\g_{\l_\Xi}(M')$ that is in the preimage of $\bar{w}_i$. By the simplicity of $M'$, we have $M'=U\g.w_1=U\g.w_2$. Therefore, we can find $Y_1,Y_2\in U\g=U\u^-_\Xi\otimes U\l_\Xi\otimes U\u^+_\Xi$ such that $w_1=Y_1.w_2,w_2=Y_2.w_1$. Since $w_1,w_2$ do not vanish in the quotient $\Res^*_\Xi(M')$, we must have $Y_1,Y_2\in1\otimes U\l_\Xi\otimes U\u^+_\Xi$. Now we have $w_1=Y_1Y_2.w_1$, so the weight of $Y_1Y_2$ is $0$. By Observation~\ref{weight}, we see that $Y_1,Y_2\in U\l_\Xi\otimes 1=U\l_\Xi$, so $w_1,w_2$ (and hence $\bar{w}_1,\bar{w}_2$) lie in the same $\l_\Xi$-submodule. This verifies the simplicity of $\Res^*_\Xi(M')$. 
    
    The functor $\Res^{!*}_\Xi$ is a quotient of $\Res^!$ (and a sub of $\Res^*_\Xi$), so it also maps simple $\g$-weight modules to simple $\l_\Xi$-weight modules or $0$.
\end{proof}

\begin{corollary}
    For any weight $\lambda\in\h^*$, $\Res^!_\Xi(L(\lambda))=\Res^*_\Xi(L(\lambda))=\Res^{!*}_\Xi(L(\lambda))=L_\Xi(\lambda)$.
\end{corollary}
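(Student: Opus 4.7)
The plan is to leverage Proposition~\ref{res-simple}, which reduces the statement to checking nonvanishing and identifying a highest weight vector of weight $\lambda$ for the $\l_\Xi$-action. First I would handle $\Res^!_\Xi$. Let $v_\lambda\in L(\lambda)$ be a highest weight vector. Since $\u^+_\Xi\subset\n^+$, we have $\u^+_\Xi\cdot v_\lambda=0$, so $v_\lambda$ lies in $\Res^!_\Xi(L(\lambda))=\Hom_{U\u^+_\Xi}(\C,L(\lambda))$; in particular $\Res^!_\Xi(L(\lambda))\neq0$. By Proposition~\ref{res-simple}, $\Res^!_\Xi(L(\lambda))$ is a simple $\l_\Xi$-weight module. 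Moreover, $v_\lambda$ is annihilated by $\l_\Xi\cap\n^+=\bigoplus_{\alpha\in\Delta^+_\Xi}\g_\alpha$, so the $\l_\Xi$-submodule it generates is a quotient of $M_\Xi(\lambda)$; by simplicity this submodule must be all of $\Res^!_\Xi(L(\lambda))$, and hence equals $L_\Xi(\lambda)$.

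Next I would treat $\Res^*_\Xi$. Here the key point is to show that the image $\bar{v}_\lambda$ of $v_\lambda$ in $\Res^*_\Xi(L(\lambda))=\C\otimes_{U\u^-_\Xi}L(\lambda)$ is nonzero. For this, suppose $v_\lambda\in \u^-_\Xi\cdot L(\lambda)$; then $v_\lambda$ is a sum of vectors of the form $Y\cdot w$ with $Y$ of weight $\beta\in\Delta^-\setminus\Delta^-_\Xi$ and $w$ a weight vector of $L(\lambda)$ of weight $\lambda-\beta$. Since $\beta<0$, this forces a weight of $L(\lambda)$ strictly larger than $\lambda$, contradicting the highest weight property. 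Thus $\bar v_\lambda\neq 0$, so $\Res^*_\Xi(L(\lambda))\neq 0$, and by Proposition~\ref{res-simple} it is simple. The vector $\bar v_\lambda$ is still annihilated by $\l_\Xi\cap\n^+$, so the $\l_\Xi$-submodule it generates is a nonzero quotient of $M_\Xi(\lambda)$; simplicity forces this submodule to be everything, giving $\Res^*_\Xi(L(\lambda))=L_\Xi(\lambda)$.

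Finally, for $\Res^{!*}_\Xi(L(\lambda))$, I would recall that by definition it is the image of the natural map $\Res^!_\Xi(L(\lambda))\to\Res^*_\Xi(L(\lambda))$, which factors as the inclusion $\Res^!_\Xi(L(\lambda))\hookrightarrow\Oblv^\g_{\l_\Xi}(L(\lambda))$ followed by the projection $\Oblv^\g_{\l_\Xi}(L(\lambda))\twoheadrightarrow\Res^*_\Xi(L(\lambda))$. Under this composition, $v_\lambda$ is sent to $\bar v_\lambda$, which we just showed is nonzero. Hence the map is a nonzero morphism between two simple $\l_\Xi$-modules, both isomorphic to $L_\Xi(\lambda)$, and must therefore be an isomorphism onto its image. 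Thus $\Res^{!*}_\Xi(L(\lambda))=L_\Xi(\lambda)$.

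The only nontrivial step is the weight-degree argument showing $\bar v_\lambda\neq 0$ in the coinvariants; everything else is a direct combination of simplicity (from Proposition~\ref{res-simple}) with the universal property of $M_\Xi(\lambda)$. Observation~\ref{weight} is the conceptual reason behind the highest-weight argument, and could alternatively be invoked in place of the elementary weight inequality.
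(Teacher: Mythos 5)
Your proof is correct and follows essentially the same route as the paper: invoke Proposition~\ref{res-simple} for simplicity and then identify the resulting simple highest weight $\l_\Xi$-module of highest weight $\lambda$ as $L_\Xi(\lambda)$. The only difference is that you explicitly verify the nonvanishing of each restriction (in particular that $\bar v_\lambda\neq0$ in the coinvariants), a step the paper leaves implicit.
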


\begin{proof}
    Using Proposition~\ref{res-simple}, we conclude that $\Res^!_\Xi(L(\lambda)),\Res^*_\Xi(L(\lambda)),\Res^{!*}_\Xi(L(\lambda))$ are simple highest weight $\l_\Xi$-modules of highest weight $\lambda$. Therefore $\Res^!_\Xi(L(\lambda))=\Res^*_\Xi(L(\lambda))=\Res^{!*}_\Xi(L(\lambda))=L_\Xi(\lambda)$.
\end{proof}

\begin{proposition}
    \label{ind-simple}
    The functor $\Ind_{\Xi,!*}$ maps a simple $\l_\Xi$-weight module to a simple weight $\g$-module.
\end{proposition}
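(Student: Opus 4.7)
The plan is to reduce the simplicity of $\Ind_{\Xi,!*}(N)$ to the simplicity of $N$, using the explicit description of $\J^{-1}_{\Xi,!}(N)$ as the maximal $\g$-submodule of $\Ind_{\Xi,!}(N) = U\u^-_\Xi \otimes N$ having zero intersection with $1 \otimes N$ (Remark~\ref{max-new}), together with $\Res^!_\Xi \circ \Ind_{\Xi,!*} = \id$ (Proposition~\ref{res-ind}). Since $N$ is simple and hence indecomposable, I first observe that $N$ lies in $\l_\Xi\wtMod_\xi$ for some $\xi \in \h^*$, so that Remark~\ref{max-new} is applicable.

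The argument then proceeds as follows. First, $\Ind_{\Xi,!*}(N)$ is nonzero, because $\Res^!_\Xi(\Ind_{\Xi,!*}(N)) = N \neq 0$. Now let $M \subset \Ind_{\Xi,!*}(N)$ be a nonzero $\g$-submodule, and let $\widetilde{M}$ be its preimage in $\Ind_{\Xi,!}(N) = U\u^-_\Xi \otimes N$ under the quotient $\Ind_{\Xi,!}(N) \proj \Ind_{\Xi,!*}(N)$. Then $\widetilde{M}$ is a $\g$-submodule that strictly contains $\J^{-1}_{\Xi,!}(N)$. By the maximality characterization of $\J^{-1}_{\Xi,!}(N)$ from Remark~\ref{max-new}, this forces $\widetilde{M} \cap (1 \otimes N) \neq 0$.

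Next I note that $1 \otimes N$ is stable under the $\l_\Xi$-action on $\Ind_{\Xi,!}(N)$ (directly from the formula $x \cdot (1 \otimes n) = 1 \otimes x \cdot n$ for $x \in \l_\Xi$), and the induced $\l_\Xi$-module structure on $1 \otimes N$ matches that of $N$. Since $\widetilde{M}$ is in particular $\l_\Xi$-stable, the intersection $\widetilde{M} \cap (1 \otimes N)$ is a nonzero $\l_\Xi$-submodule of $1 \otimes N \cong N$. The simplicity of $N$ then gives $1 \otimes N \subset \widetilde{M}$. But $\Ind_{\Xi,!}(N)$ is generated by $1 \otimes N$ as a $\g$-module, so $\widetilde{M} = \Ind_{\Xi,!}(N)$, hence $M = \Ind_{\Xi,!*}(N)$. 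This establishes simplicity.

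There is no real obstacle, as the machinery of the preceding sections is tailored exactly to this statement; the only careful point to verify is that Remark~\ref{max-new}'s hypothesis ($N$ being $\xi$-shifted for some $\xi$) is automatic for a simple $N$, and that passing to preimages correctly translates the assumed non-triviality of $M$ into the statement that $\widetilde{M}$ strictly contains $\J^{-1}_{\Xi,!}(N)$.
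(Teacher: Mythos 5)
Your proof is correct and follows essentially the same route as the paper's: both arguments rest on Remark~\ref{max-new} (the preimage of a nonzero submodule must meet $1\otimes N$) combined with the facts that $1\otimes N\cong N$ as an $\l_\Xi$-module and that $1\otimes N$ generates $\Ind_{\Xi,!}(N)$ over $U\g$. The only cosmetic difference is that you work with the full preimage $\widetilde{M}$ and the whole $\l_\Xi$-submodule $\widetilde{M}\cap(1\otimes N)$, whereas the paper phrases the same argument in terms of a single cyclic vector.
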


\begin{proof}
    Let $N$ be a simple $\l_\Xi$-weight module, then it is $\xi$-shifted for some $\xi$. Pick any nonzero $n\in N$, then $U\l_\Xi.n=N$ by the simplicity of $N$. Therefore $\Ind_{\Xi,!}(N)=U\g\otimes_{U\p^+_\Xi}N=U\g.(1\otimes n)$ and the image of $1\otimes n$ in $\Ind_{\Xi,!*}(N)$ is cyclic.
    
    Let $\bar{v}$ be any nonzero vector in $\Ind_{\Xi,!*}(N)$, and $v$ be a preimage of $\bar{v}$ in $\Ind_{\Xi,!}(N)$. Consider the $\g$-submodule $U\g.v\subset\Ind_{\Xi,!}(N)$. If it has zero intersection with $1\otimes N$, then by Remark~\ref{max-new}, it is contained in $\J^{-1}_{\Xi,!}(N)$. As a consequence, its image in $\Ind_{\Xi,!*}(N)$ will be zero. In particular, $\bar{v}=0$, a contradiction. Therefore the intersection $U\g.v\cap(1\otimes N)$ is nonzero. This means that there exists a nonzero element in $U\g.v$ of the form $1\otimes n$ for some $n\in N$. By previous discussion, we see that $1\otimes n$ generates $\Ind_{\Xi,!}(N)$. Hence $U\g.v=\Ind_{\Xi,!}(N)$ and $U\g.\bar{v}=\Ind_{\Xi,!*}(N)$. This shows the simplicity of $\Ind_{\Xi,!*}(N)$.
\end{proof}

\begin{remark}
    The above Proposition~\ref{ind-simple} justifies the term ``minimal''.
\end{remark}

\begin{corollary}
    \label{minimal-maps-hwsimple}
    For any weight $\lambda\in\h^*$, $\Ind_{\Xi,!*}(L_\Xi(\lambda))=L(\lambda)$.
\end{corollary}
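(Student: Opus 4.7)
The plan is to identify $\Ind_{\Xi,!*}(L_\Xi(\lambda))$ as a nonzero simple quotient of the Verma module $M(\lambda)$; since $L(\lambda)$ is the unique such quotient, the result will follow at once.

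First, I would start from the tautological surjection $M_\Xi(\lambda)\proj L_\Xi(\lambda)$ in $\l_\Xi\Mod$. Applying the exact functor $\Ind_{\Xi,!}$ and using the fact (last clause of the first proposition in Section~\ref{mpind}) that $\Ind_{\Xi,!}(M_\Xi(\lambda))=M(\lambda)$, I obtain a surjection $M(\lambda)\proj\Ind_{\Xi,!}(L_\Xi(\lambda))$. Composing with the defining surjection $\Ind_{\Xi,!}(L_\Xi(\lambda))\proj\Ind_{\Xi,!*}(L_\Xi(\lambda))$ from Definition~\ref{minimal-pinduction}, I get a surjection $M(\lambda)\proj\Ind_{\Xi,!*}(L_\Xi(\lambda))$.

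Next, I would check that $\Ind_{\Xi,!*}(L_\Xi(\lambda))$ is nonzero and simple. Simplicity is exactly Proposition~\ref{ind-simple} applied to the simple $\l_\Xi$-weight module $L_\Xi(\lambda)$. Non-vanishing follows from Proposition~\ref{res-ind}: since $\Res_\Xi^!\circ\Ind_{\Xi,!*}=\id$, we have $\Res_\Xi^!(\Ind_{\Xi,!*}(L_\Xi(\lambda)))=L_\Xi(\lambda)\neq 0$, so $\Ind_{\Xi,!*}(L_\Xi(\lambda))$ itself cannot be zero.

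Putting these together, $\Ind_{\Xi,!*}(L_\Xi(\lambda))$ is a nonzero simple quotient of $M(\lambda)$. Since $M(\lambda)$ has a unique maximal proper submodule $N(\lambda)$ and hence a unique simple quotient $L(\lambda)=M(\lambda)/N(\lambda)$ (recalled in the Basic representations subsection), the identification $\Ind_{\Xi,!*}(L_\Xi(\lambda))\simeq L(\lambda)$ is immediate. I do not expect any serious obstacle here; the whole argument is a direct synthesis of Proposition~\ref{ind-simple}, Proposition~\ref{res-ind}, the exactness of $\Ind_{\Xi,!}$, and the standard structure of $M(\lambda)$.
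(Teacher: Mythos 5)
Your proof is correct and follows essentially the same route as the paper: the paper also invokes Proposition~\ref{ind-simple} for simplicity and then observes that $\Ind_{\Xi,!*}(L_\Xi(\lambda))$ is a highest weight module of highest weight $\lambda$, which is precisely your ``nonzero simple quotient of $M(\lambda)$'' phrased differently. Your version merely makes explicit the surjection from $M(\lambda)$ and the non-vanishing via $\Res^!_\Xi\circ\Ind_{\Xi,!*}=\id$, which the paper leaves implicit.
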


\begin{proof}
    By Proposition~\ref{ind-simple}, $\Ind_{\Xi,!*}(L_\Xi(\lambda))$ is a simple $\g$-weight module. Noting that it is a highest weight module of highest weight $\lambda$, we conclude that $\Ind_{\Xi,!*}(L_\Xi(\lambda))=L(\lambda)$.
\end{proof}

\section{Applications}
\label{applications}

We exhibit two applications of our theory of minimal parabolic induction.

\subsection{Extensions}

In this section, we examine the behavior of first extension groups between some simple highest weight modules under minimal parabolic induction.

\begin{lemma}
    \label{res-exact}
    Let $\lambda\in\h^*$ be a weight. Suppose we have a short exact sequence in $\g\wtMod$
    $$0\to M_1\to M_2\to L(\lambda)\to0.$$
    Moreover, assume that there is no weight $\mu\in\supp(M_1)$ such that $\mu>\lambda$, then the restricted sequence
    $$0\to\Res^!_\Xi(M_1)\to\Res^!_\Xi(M_2)\to\Res^!_\Xi(L(\lambda))=L_\Xi(\lambda)\to0$$
    is also exact.
\end{lemma}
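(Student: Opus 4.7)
The plan is to first observe that $\Res^!_\Xi$, being right adjoint to $\Ind_{\Xi,!}$, is left exact, so applying it to the given short exact sequence automatically yields exactness of
$$0\to\Res^!_\Xi(M_1)\to\Res^!_\Xi(M_2)\to\Res^!_\Xi(L(\lambda))=L_\Xi(\lambda),$$
and the only thing that remains to be proved is surjectivity of the rightmost arrow.

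To establish this surjectivity, I would reduce the problem to lifting a single weight vector. Note that $\Res^!_\Xi(M_2)=M_2^{\u^+_\Xi}$ is stable under the $\l_\Xi$-action on $M_2$, since $[\l_\Xi,\u^+_\Xi]\subset\u^+_\Xi$, and the map $\Res^!_\Xi(M_2)\to L_\Xi(\lambda)$ is $\l_\Xi$-equivariant. Because $L_\Xi(\lambda)=U\l_\Xi\cdot v_\lambda$ is cyclic as an $\l_\Xi$-module, generated by its highest weight vector $v_\lambda$, it suffices to exhibit some $\tilde{v}_\lambda\in M_2^{\u^+_\Xi}$ mapping to $v_\lambda$; then $U\l_\Xi\cdot\tilde{v}_\lambda\subset\Res^!_\Xi(M_2)$ will surject onto $L_\Xi(\lambda)$.

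Finally, I would construct this lift directly and invoke the support hypothesis to verify $\u^+_\Xi$-invariance. Since $M_2\proj L(\lambda)$ is a surjection in $\g\wtMod$, I can pick any $\tilde{v}_\lambda\in(M_2)_\lambda$ mapping to $v_\lambda$. For any $\alpha\in\Delta^+\setminus\Delta^+_\Xi$ and any root vector $e_\alpha\in\g_\alpha\subset\u^+_\Xi$, the element $e_\alpha\tilde{v}_\lambda$ lies in $(M_2)_{\lambda+\alpha}$ and projects to $e_\alpha v_\lambda=0$ in $L(\lambda)$, because $v_\lambda$ is annihilated by all of $\n^+$. Hence $e_\alpha\tilde{v}_\lambda\in(M_1)_{\lambda+\alpha}$. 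Since $\alpha\in\Delta^+$ gives $\lambda+\alpha>\lambda$, the hypothesis on $\supp(M_1)$ forces $e_\alpha\tilde{v}_\lambda=0$, so $\tilde{v}_\lambda\in M_2^{\u^+_\Xi}$ as required. I do not anticipate any substantive obstacle; the support hypothesis has been inserted precisely to kill the only potential obstruction to $\u^+_\Xi$-invariance of the lift, and the rest of the argument is formal.
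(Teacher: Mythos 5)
Your proof is correct and follows essentially the same route as the paper's: use left exactness of $\Res^!_\Xi$ to reduce to surjectivity, lift the highest weight vector of $L(\lambda)$ to a weight vector of $M_2$, and use the support hypothesis on $M_1$ to show the lift is killed by $\u^+_\Xi$ (the paper notes it is in fact killed by all of $\n^+$). Your added remarks spelling out why hitting $v_\lambda$ suffices --- $\l_\Xi$-equivariance of the restricted map and cyclicity of $L_\Xi(\lambda)$ over $U\l_\Xi$ --- make explicit a step the paper leaves implicit, but do not change the argument.
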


\begin{proof}
    The functor $\Res^!_\Xi$ is left exact, so we only need to show that $\Res^!_\Xi(M_2)\to L_\Xi(\lambda)$ is surjective. Let $v_\lambda$ be the highest weight vector in $L(\lambda)$. By assumption, $v_\lambda$ must come from a highest weight vector $v\in M_2$. We have $v\in M_2^{\n^+}\subset M_2^{\u^+_\Xi}=\Res^!_\Xi(M_2)$. Therefore $v_\lambda$, now viewed as the highest weight vector in $L_\Xi(\lambda)$, lives in the image of $\Res^!_\Xi(M_2)\to L_\Xi(\lambda)$. As a consequence, the map $\Res^!_\Xi(M_2)\to L_\Xi(\lambda)$ is surjective.
\end{proof}

\begin{proposition}
    \label{res-ext}
    Let $\mu,\lambda\in\h^*$ be two weights such that $\mu\equiv\lambda\pmod{Q_\Xi}$, then we have an inclusion
    $$\Ext^1_{\g\wtMod}(L(\lambda),L(\mu))\incl\Ext^1_{\l_\Xi\wtMod}(L_\Xi(\lambda),L_\Xi(\mu)).$$
\end{proposition}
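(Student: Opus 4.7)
The plan is to apply the parabolic restriction functor $\Res^!_\Xi$ to a representative extension, then use the adjunction $(\Ind_{\Xi,!},\Res^!_\Xi)$ together with the minimal-induction description $L(\lambda)=\Ind_{\Xi,!*}(L_\Xi(\lambda))$ from Corollary~\ref{minimal-maps-hwsimple} to verify injectivity.

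First, since the hypothesis $\mu\equiv\lambda\pmod{Q_\Xi}$ is symmetric in the two weights, and since restricted duality gives $\Ext^1_{\g\wtMod}(L(\lambda),L(\mu))\simeq\Ext^1_{\g\wtMod}(L(\mu),L(\lambda))$ together with an analogous identity in $\l_\Xi\wtMod$, I may assume $\mu\not>\lambda$. Under this assumption no weight of $L(\mu)$ exceeds $\lambda$, so given any extension $0\to L(\mu)\to M\to L(\lambda)\to 0$, Lemma~\ref{res-exact} yields an exact sequence $0\to L_\Xi(\mu)\to\Res^!_\Xi(M)\to L_\Xi(\lambda)\to 0$ in $\l_\Xi\wtMod$. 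This defines the desired homomorphism on Ext groups, and what remains is to prove it is injective.

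Suppose the restricted extension splits via a section $s\colon L_\Xi(\lambda)\to\Res^!_\Xi(M)$. By the $(\Ind_{\Xi,!},\Res^!_\Xi)$ adjunction, $s$ corresponds to a $\g$-module map $\tilde{s}\colon\Ind_{\Xi,!}(L_\Xi(\lambda))\to M$. Its composition with $M\twoheadrightarrow L(\lambda)$ corresponds under the same adjunction to the identity on $L_\Xi(\lambda)$, so it must coincide with the canonical surjection $\Ind_{\Xi,!}(L_\Xi(\lambda))\twoheadrightarrow\Ind_{\Xi,!*}(L_\Xi(\lambda))=L(\lambda)$, whose kernel is $\J^{-1}_{\Xi,!}(L_\Xi(\lambda))$. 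Consequently, $\tilde{s}$ induces a $\g$-module map $f\colon\J^{-1}_{\Xi,!}(L_\Xi(\lambda))\to L(\mu)$, and showing $f=0$ will let $\tilde{s}$ descend to a section $L(\lambda)\to M$ of the original extension, completing the proof.

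The main obstacle is the weight computation that rules out $f\neq 0$. Because $L(\mu)$ is simple of highest weight $\mu$, a nonzero $f$ must be surjective and would force $\mu\in\supp(\J^{-1}_{\Xi,!}(L_\Xi(\lambda)))$. By Proposition~\ref{j-1}, $\J^{-1}_{\Xi,!}(L_\Xi(\lambda))\subset\u^-_\Xi(U\u^-_\Xi)\otimes L_\Xi(\lambda)$, so every weight of $\J^{-1}_{\Xi,!}(L_\Xi(\lambda))$ is of the form $\lambda+\nu+\eta$, where $\nu$ is a nonzero element of the monoid generated by $\Delta^-\setminus\Delta^-_\Xi$ and $\eta\in-\Z_{\geq 0}\Xi\subset Q_\Xi$. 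By Observation~\ref{weight} this monoid intersects $Q_\Xi$ only at zero, so $\nu+\eta\notin Q_\Xi$. Since $\mu-\lambda\in Q_\Xi$ by hypothesis, this excludes $\mu$ from the support of $\J^{-1}_{\Xi,!}(L_\Xi(\lambda))$, yielding $f=0$ as needed.
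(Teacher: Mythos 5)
Your proposal is correct, and the first half (the duality reduction to $\mu\not>\lambda$ and the use of Lemma~\ref{res-exact} to define the map on $\Ext^1$) coincides with the paper's argument. Where you diverge is in the injectivity step. The paper proceeds by hand: given a section $\iota$ of $\Res^!_\Xi(P)\proj L_\Xi(\lambda)$, it defines a candidate section $X.v_\lambda\mapsto X.\iota(v_\lambda)$ of $P\proj L(\lambda)$ and checks well-definedness by an explicit computation, showing that a nonzero obstruction would force the highest weight vector of $L(\mu)$ into $U\l_\Xi.\iota(v_\lambda)=\iota(L_\Xi(\lambda))$ via Observation~\ref{weight}, contradicting the fact that $\iota$ is a section. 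You instead pass through the adjunction $(\Ind_{\Xi,!},\Res^!_\Xi)$ to get $\tilde{s}\colon\Ind_{\Xi,!}(L_\Xi(\lambda))\to M$, identify the kernel of the composite with $\J^{-1}_{\Xi,!}(L_\Xi(\lambda))$ using Corollary~\ref{minimal-maps-hwsimple}, and kill the induced map $\J^{-1}_{\Xi,!}(L_\Xi(\lambda))\to L(\mu)$ by the support bound of Proposition~\ref{j-1}. Both arguments ultimately rest on the same combinatorial fact (Observation~\ref{weight}: the monoid spanned by $\Delta^-\backslash\Delta^-_\Xi$ meets $Q_\Xi$ only at zero), but yours reuses the $\J^{-1}_{\Xi,!}$ machinery of Section~\ref{mpind} and avoids the well-definedness verification, which makes it cleaner and arguably more in the spirit of the functorial framework the paper sets up; the paper's version is more elementary and self-contained. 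One cosmetic remark: the composite $\Ind_{\Xi,!}(L_\Xi(\lambda))\to M\to L(\lambda)$ agrees with the canonical surjection a priori only up to a nonzero scalar (both adjoints are nonzero endomorphisms of the simple module $L_\Xi(\lambda)$), but this does not affect the identification of its kernel with $\J^{-1}_{\Xi,!}(L_\Xi(\lambda))$, which is all you use.
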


\begin{proof}
    Using the duality functor introduced in Section~\ref{km-weight}, we have
    $$\Ext^1_{\g\wtMod}(L(\lambda),L(\mu))=\Ext^1_{\g\wtMod}(L(\mu),L(\lambda)),\Ext^1_{\l_\Xi\wtMod}(L_\Xi(\lambda),L_\Xi(\mu))=\Ext^1_{\l_\Xi\wtMod}(L_\Xi(\mu),L_\Xi(\lambda)).$$
    So without losing generality, we can assume that $\mu\ngtr\lambda$.

    Now by Lemma~\ref{res-exact}, for any extension
    $$[0\to L(\mu)\to P\to L(\lambda)\to0]\in\Ext^1_{\g\wtMod}(L(\lambda),L(\mu)),$$
    the restricted sequence
    $$0\to\Res^!_\Xi(L(\mu))=L_\Xi(\mu)\to\Res^!_\Xi(P)\to\Res^!_\Xi(L(\lambda))=L_\Xi(\lambda)\to0$$
    is also exact, and hence gives an element in $\Ext^1_{\l_\Xi\wtMod}(L_\Xi(\lambda),L_\Xi(\mu))$. Moreover, it induces a linear map
    $$\R\colon\Ext^1_{\g\wtMod}(L(\lambda),L(\mu))\to\Ext^1_{\l_\Xi\wtMod}(L_\Xi(\lambda),L_\Xi(\mu)).$$

    Let us show that $\R$ is injective. Let
    $$\left[0\to L(\mu)\xrightarrow{f} P\xrightarrow{g} L(\lambda)\to0\right]$$
    be an element in $\Ext^1_{\g\wtMod}(L(\lambda),L(\mu))$ such that
    $$\R\left[0\to L(\mu)\xrightarrow{f} P\xrightarrow{g} L(\lambda)\to0\right]=0.$$
    This means that the projection $\Res^!_\Xi(P)\proj\Res^!_\Xi(L(\lambda))=L_\Xi(\lambda)$ admits a section, which we denote by $\iota$. Denote by $v_\lambda$ the highest weight vector in $L(\lambda)$ (it is also the highest weight vector in $L_\Xi(\lambda)$), then any element in $L(\lambda)$ is of the form $X.v_\lambda$ for some $X\in U\g$. Moreover, we know that $\iota(v_\lambda)$ is a highest weight vector in $P$ because $\mu\ngtr\lambda$. We claim that the map $\Tilde{\iota}\colon L(\lambda)\to P$ defined by $X.v_\lambda\mapsto X.\iota(v_\lambda)$ is well-defined and provides a section for $g$.

    Suppose $X_1.v_\lambda=X_2.v_\lambda$ for some $X_1,X_2\in U\g$, then
    $$g((X_1-X_2).\iota(v_\lambda))=(X_1-X_2).g(\iota(v_\lambda))=(X_1-X_2).v_\lambda=0.$$
    Therefore $(X_1-X_2).\iota(v_\lambda)=f(v)$ for some $v\in L(\mu)$. If $v\neq0$, then it generates $L(\mu)$ by the simplicity of $L(\mu)$. In particular, there exists $Y\in U\g$ such that $Y.v=v_\mu$, where $v_\mu$ is the highest weight vector of $L(\mu)$ (it is also the highest weight vector of $L_\Xi(\mu)$). Now we have
    $$f(v_\mu)=Y.f(v)=Y(X_1-X_2).\iota(v_\lambda)\subset U\g.\iota(v_\lambda)=(U\u^-_\Xi\otimes U\l_\Xi\otimes U\u^+_\Xi).\iota(v_\lambda)=(U\u^-_\Xi\otimes U\l_\Xi).\iota(v_\lambda).$$
    Since $\lambda\equiv\mu\pmod{Q_\Xi}$, we have, by Observation~\ref{weight},
    $$f(v_\mu)\in U\l_\Xi.v_\lambda=\iota(L_\Xi(\lambda)).$$
    This contradicts to the fact that $\iota$ is a section of $\Res^!_\Xi(g)$. Therefore, we have $v=0$. Hence $X_1.\iota(v_\lambda)=X_2.\iota(v_\lambda)$, and $\Tilde{\iota}$ is well-defined. Now it is transparent to see that $\Tilde{\iota}$ provides a section for $g$.
\end{proof}

We introduce the following definition for technical reason.

\begin{definition}
    \label{joyful}
    An integral weight $\lambda\in\h^*$ is called \textit{$\Xi$-joyful}, if
    $$N(\lambda)=\J^{-1}_{\Xi,!}(M_\Xi(\lambda))+\Ind_{\Xi,!}(N_\Xi(\lambda)).$$
    Recall that $N(\lambda)$ (resp. $N_\Xi(\lambda)$) is the maximal $\g$ (resp. $\l_\Xi$) proper submodule of $M(\lambda)$ (resp. $M_\Xi(\lambda)$).
\end{definition}

\begin{remark}
    We always have an inclusion
    $$\J^{-1}_{\Xi,!}(M_\Xi(\lambda))+\Ind_{\Xi,!}(N_\Xi(\lambda))\incl N(\lambda).$$
\end{remark}

\begin{proposition}
    \label{gen-singular}
    Suppose $\lambda$ is an integral weight and $N(\lambda)$ is generated by singular vectors, i.e.
    $$N(\lambda)=\sum_{\alpha\in\Pi:s_\alpha\cdot\lambda<\lambda}M(s_\alpha\cdot\lambda),$$
    then $\lambda$ is $\Xi$-joyful for any parabolic type $\Xi$.
\end{proposition}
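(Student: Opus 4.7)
The plan is to establish the nontrivial inclusion $N(\lambda)\subset\J^{-1}_{\Xi,!}(M_\Xi(\lambda))+\Ind_{\Xi,!}(N_\Xi(\lambda))$, since the reverse inclusion is already recorded in the remark preceding the statement. By the generating hypothesis, it suffices to check that each Verma submodule $M(s_\alpha\cdot\lambda)\subset M(\lambda)$ (for $\alpha\in\Pi$ with $s_\alpha\cdot\lambda<\lambda$) lands in the right-hand side. I split into two cases depending on whether $\alpha$ lies in $\Xi$.

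In the case $\alpha\in\Xi$, the reflection $s_\alpha$ lies in $W_\Xi$, so inside $M_\Xi(\lambda)$ there is a singular vector $v_{s_\alpha\cdot\lambda}$ generating a submodule isomorphic to $M_\Xi(s_\alpha\cdot\lambda)\subset N_\Xi(\lambda)$. Applying the exact functor $\Ind_{\Xi,!}$, which sends $\l_\Xi$-Verma modules to $\g$-Verma modules of the same highest weight, produces an inclusion $M(s_\alpha\cdot\lambda)=\Ind_{\Xi,!}(M_\Xi(s_\alpha\cdot\lambda))\incl\Ind_{\Xi,!}(N_\Xi(\lambda))\subset M(\lambda)$. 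To confirm that this is the standard copy of $M(s_\alpha\cdot\lambda)$, note that under the PBW identification $M(\lambda)=U\u^-_\Xi\otimes M_\Xi(\lambda)$ the element $1\otimes v_{s_\alpha\cdot\lambda}$ is killed by the positive part of $\l_\Xi$ (since $v_{s_\alpha\cdot\lambda}$ is $\l_\Xi$-singular) and by $\u^+_\Xi$ (which acts by zero on $M_\Xi(\lambda)$), hence is a genuine $\g$-singular vector of weight $s_\alpha\cdot\lambda$.

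In the case $\alpha\notin\Xi$, the plan is to show $M(s_\alpha\cdot\lambda)\subset\J^{-1}_{\Xi,!}(M_\Xi(\lambda))$ by verifying the support criterion from Remark~\ref{max-new}: it suffices to prove $M(s_\alpha\cdot\lambda)\cap(1\otimes M_\Xi(\lambda))=0$. Every weight of $M(s_\alpha\cdot\lambda)$ has the form $\lambda-n\alpha-\sum_i c_i\alpha_i$ with $n=\langle\lambda+\rho,\alpha^\vee\rangle\geq 1$ and $c_i\in\Z_{\geq0}$, while every weight of $1\otimes M_\Xi(\lambda)$ lies in $\lambda+Q_\Xi$; coincidence would force $n\alpha+\sum_i c_i\alpha_i\in Q_\Xi$, but the $\alpha$-coefficient of the left side is at least $n\geq 1$ while the right side has vanishing $\alpha$-coefficient, contradicting Observation~\ref{weight}.

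The main obstacle I anticipate is not conceptual but bookkeeping: correctly matching the singular vector that generates $M(s_\alpha\cdot\lambda)\subset M(\lambda)$ with the Verma submodule produced by $\Ind_{\Xi,!}$ in the first case, and invoking Remark~\ref{max-new} (the weight-module version) rather than the weaker Remark~\ref{max} in the second case, so that zero intersection with $1\otimes M_\Xi(\lambda)$ actually suffices to land in $\J^{-1}_{\Xi,!}(M_\Xi(\lambda))$. Once both cases are established, summing over all generating $\alpha$ yields $N(\lambda)\subset\J^{-1}_{\Xi,!}(M_\Xi(\lambda))+\Ind_{\Xi,!}(N_\Xi(\lambda))$, which combined with the automatic reverse inclusion gives the $\Xi$-joyfulness of $\lambda$.
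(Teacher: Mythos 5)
Your proof is correct and follows essentially the same route as the paper: split the singular generators according to whether $\alpha\in\Xi$ (where $M(s_\alpha\cdot\lambda)=\Ind_{\Xi,!}(M_\Xi(s_\alpha\cdot\lambda))\subset\Ind_{\Xi,!}(N_\Xi(\lambda))$) or $\alpha\notin\Xi$ (where $M(s_\alpha\cdot\lambda)\subset\J^{-1}_{\Xi,!}(M_\Xi(\lambda))$). The only cosmetic difference is that in the second case the paper asserts the containment $M(s_\alpha\cdot\lambda)\subset\u^-_\Xi(U\u^-_\Xi)\otimes M_\Xi(\lambda)$ and cites Proposition~\ref{j-2} directly, whereas you verify the equivalent zero-intersection criterion of Remark~\ref{max-new} by an explicit weight computation; by Observation~\ref{weight} these are the same argument.
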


\begin{proof}
    We have
    $$\sum_{\alpha\in\Pi\backslash\Xi:s_\alpha\cdot\lambda<\lambda}M(s_\alpha\cdot\lambda)\subset\u^-_\Xi(U\u^-_\Xi)\otimes M_\Xi(\lambda),$$
    so
    $$\sum_{\alpha\in\Pi\backslash\Xi:s_\alpha\cdot\lambda<\lambda}M(s_\alpha\cdot\lambda)\subset\J^{-1}_{\Xi,!}(M_\Xi(\lambda))$$
    by Proposition~\ref{j-2}.

    Moreover,
    $$\sum_{\alpha\in\Xi:s_\alpha\cdot\lambda<\lambda}M(s_\alpha\cdot\lambda)=\Ind_{\Xi,!}\left(\sum_{\alpha\in\Xi:s_\alpha\cdot\lambda<\lambda}M_\Xi(s_\alpha\cdot\lambda)\right)\subset\Ind_{\Xi,!}(N_\Xi(\lambda)),$$
    so
    $$N(\lambda)=\sum_{\alpha\in\Pi\backslash\Xi:s_\alpha\cdot\lambda<\lambda}M(s_\alpha\cdot\lambda)+\sum_{\alpha\in\Xi:s_\alpha\cdot\lambda<\lambda}M(s_\alpha\cdot\lambda)\subset\J^{-1}_{\Xi,!}(M_\Xi(\lambda))+\Ind_{\Xi,!}(N_\Xi(\lambda)).$$
\end{proof}

The notion of $\Xi$-joyful is useful, due to the following Proposition.

\begin{proposition}
    \label{sing-surj}
    Suppose an integral weight $\lambda\in\h^*$ is $\Xi$-joyful, then the map
    $$\J^{-1}_{\Xi,!}(M_\Xi(\lambda))\to\J^{-1}_{\Xi,!}(L_\Xi(\lambda))$$
    induced from the projection $M_\Xi(\lambda)\proj L_\Xi(\lambda)$ is surjective.
\end{proposition}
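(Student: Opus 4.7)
The plan is to leverage exactness of $\Ind_{\Xi,!}$ together with Corollary~\ref{minimal-maps-hwsimple} to identify $\J^{-1}_{\Xi,!}(L_\Xi(\lambda))$ as an explicit subquotient of $M(\lambda)$, and then read off surjectivity directly from the definition of $\Xi$-joyfulness.

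First, I would apply the exact functor $\Ind_{\Xi,!}$ to the short exact sequence
$$0\to N_\Xi(\lambda)\to M_\Xi(\lambda)\to L_\Xi(\lambda)\to 0$$
in $\l_\Xi\Mod$, obtaining
$$0\to\Ind_{\Xi,!}(N_\Xi(\lambda))\to M(\lambda)\to\Ind_{\Xi,!}(L_\Xi(\lambda))\to 0,$$
so that $\Ind_{\Xi,!}(L_\Xi(\lambda))\simeq M(\lambda)/\Ind_{\Xi,!}(N_\Xi(\lambda))$. Combined with $\Ind_{\Xi,!*}(L_\Xi(\lambda))=L(\lambda)$ from Corollary~\ref{minimal-maps-hwsimple}, the defining short exact sequence for $\J^{-1}_{\Xi,!}(L_\Xi(\lambda))$ becomes
$$0\to\J^{-1}_{\Xi,!}(L_\Xi(\lambda))\to M(\lambda)/\Ind_{\Xi,!}(N_\Xi(\lambda))\to L(\lambda)\to 0,$$
from which I read off $\J^{-1}_{\Xi,!}(L_\Xi(\lambda))=N(\lambda)/\Ind_{\Xi,!}(N_\Xi(\lambda))$. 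The inclusion $\Ind_{\Xi,!}(N_\Xi(\lambda))\subset N(\lambda)$ used here is automatic, since any quotient of $M(\lambda)$ that still surjects onto $L(\lambda)$ has its kernel inside $N(\lambda)$ by maximality.

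Next, I would analyse the commutative diagram of short exact sequences obtained by applying the natural transformation $\Ind_{\Xi,!}\to\Ind_{\Xi,!*}$ to the morphism $M_\Xi(\lambda)\proj L_\Xi(\lambda)$. Its middle vertical arrow is precisely the canonical projection $M(\lambda)\proj M(\lambda)/\Ind_{\Xi,!}(N_\Xi(\lambda))$, so, after the identification of the previous paragraph, the induced map $\J^{-1}_{\Xi,!}(M_\Xi(\lambda))\to\J^{-1}_{\Xi,!}(L_\Xi(\lambda))$ is nothing but the restriction of this projection to the submodule $\J^{-1}_{\Xi,!}(M_\Xi(\lambda))\subset N(\lambda)$. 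Its image is therefore $\bigl(\J^{-1}_{\Xi,!}(M_\Xi(\lambda))+\Ind_{\Xi,!}(N_\Xi(\lambda))\bigr)/\Ind_{\Xi,!}(N_\Xi(\lambda))$.

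Surjectivity of this map is thus equivalent to the equality
$$N(\lambda)=\J^{-1}_{\Xi,!}(M_\Xi(\lambda))+\Ind_{\Xi,!}(N_\Xi(\lambda)),$$
which is exactly the hypothesis that $\lambda$ is $\Xi$-joyful, so the conclusion follows. The only non-routine part is confirming the identification of the induced map on kernels in the diagram above, which is a straightforward diagram chase; I do not foresee any substantial obstacle beyond bookkeeping.
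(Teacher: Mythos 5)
Your proof is correct and follows essentially the same route as the paper: both work with the commutative diagram obtained by applying $\Ind_{\Xi,!}\Rightarrow\Ind_{\Xi,!*}$ to $M_\Xi(\lambda)\proj L_\Xi(\lambda)$, use exactness of $\Ind_{\Xi,!}$ together with $\Ind_{\Xi,!}(M_\Xi(\lambda))=M(\lambda)$ and $\Ind_{\Xi,!*}(L_\Xi(\lambda))=L(\lambda)$, and reduce surjectivity to the identity $N(\lambda)=\J^{-1}_{\Xi,!}(M_\Xi(\lambda))+\Ind_{\Xi,!}(N_\Xi(\lambda))$ defining $\Xi$-joyfulness. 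The only cosmetic difference is that the paper runs the Snake Lemma on the rows (computing $\ker b=\Ind_{\Xi,!}(N_\Xi(\lambda))$ and $\ker c=N(\lambda)/\J^{-1}_{\Xi,!}(M_\Xi(\lambda))$), whereas you identify $\J^{-1}_{\Xi,!}(L_\Xi(\lambda))=N(\lambda)/\Ind_{\Xi,!}(N_\Xi(\lambda))$ directly and read off the image of the left-hand vertical map.
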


\begin{proof}
    Let us consider a commutative diagram with exact rows
    $$\begin{tikzcd}
0 \arrow[r] & {\J^{-1}_{\Xi,!}(M_\Xi(\lambda))} \arrow[r] \arrow[d, "a"] & {\Ind_{\Xi,!}(M_\Xi(\lambda))} \arrow[r] \arrow[d, "b"] & {\Ind_{\Xi,!*}(M_\Xi(\lambda))} \arrow[r] \arrow[d, "c"] & 0 \\
0 \arrow[r] & {\J^{-1}_{\Xi,!}(L_\Xi(\lambda))} \arrow[r]           & {\Ind_{\Xi,!}(L_\Xi(\lambda))} \arrow[r]           & {\Ind_{\Xi,!*}(L_\Xi(\lambda))} \arrow[r]           & 0
\end{tikzcd}$$
By the Snake Lemma, it suffices to show that the map
$$\ker(\Ind_{\Xi,!}(M_\Xi(\lambda))\xrightarrow{b}\Ind_{\Xi,!}(L_\Xi(\lambda)))\to\ker(\Ind_{\Xi,!*}(M_\Xi(\lambda))\xrightarrow{c}\Ind_{\Xi,!*}(L_\Xi(\lambda)))$$
is surjective. Since $\Ind_{\Xi,!}$ is exact,
$$\ker(\Ind_{\Xi,!}(M_\Xi(\lambda))\xrightarrow{b}\Ind_{\Xi,!}(L_\Xi(\lambda)))=\Ind_{\Xi,!}(N_\Xi(\lambda)).$$
Notice that $\Ind_{\Xi,!}(M_\Xi(\lambda))=M(\lambda)$ and $\Ind_{\Xi,!*}(L_\Xi(\lambda))=L(\lambda)$,
$$\ker(\Ind_{\Xi,!}(M_\Xi(\lambda))\to\Ind_{\Xi,!*}(L_\Xi(\lambda)))=\ker(M(\lambda)\to L(\lambda))=N(\lambda).$$
Therefore
$$\ker(\Ind_{\Xi,!*}(M_\Xi(\lambda))\xrightarrow{c}\Ind_{\Xi,!*}(L_\Xi(\lambda)))=\frac{\ker(\Ind_{\Xi,!}(M_\Xi(\lambda))\to\Ind_{\Xi,!*}(L_\Xi(\lambda)))}{\ker(\Ind_{\Xi,!}(M_\Xi(\lambda))\to\Ind_{\Xi,!*}(M_\Xi(\lambda)))}=\frac{N(\lambda)}{\J^{-1}_{\Xi,!}(M_\Xi(\lambda))}.$$
By our assumption that $\lambda$ is $\Xi$-joyful, the map $\Ind_{\Xi,!}(N_\Xi(\lambda))\to\frac{N(\lambda)}{\J^{-1}_{\Xi,!}(M_\Xi(\lambda))}$ is surjective, we are done.
\end{proof}

\begin{lemma}
    \label{ind-exact}
    Let $\mu,\lambda\in\h^*$ be two integral weights such that $\mu-\lambda\notin\Z_{\geq0}\Xi$. Moreover, suppose that $\lambda$ is $\Xi$-joyful, then for any short exact sequence of weight $\l_\Xi$-modules
    $$0\to L_\Xi(\mu)\to Q\to L_\Xi(\lambda)\to0,$$
    the induced sequence
    $$0\to\Ind_{\Xi,!*}(L_\Xi(\mu))=L(\mu)\to\Ind_{\Xi,!*}(Q)\to\Ind_{\Xi,!*}(L_\Xi(\lambda))=L(\lambda)\to0$$
    is exact.
\end{lemma}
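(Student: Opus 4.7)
The plan is to reduce the desired exactness to the surjectivity hypothesis of Lemma~\ref{exact}. Injectivity at $L(\mu)=\Ind_{\Xi,!*}(L_\Xi(\mu))$ and surjectivity onto $L(\lambda)=\Ind_{\Xi,!*}(L_\Xi(\lambda))$ are immediate from Proposition~\ref{inj-surj}, since $L_\Xi(\mu)\incl Q$ and $Q\proj L_\Xi(\lambda)$. Middle exactness of $\Ind_{\Xi,!*}(L_\Xi(\mu))\to\Ind_{\Xi,!*}(Q)\to\Ind_{\Xi,!*}(L_\Xi(\lambda))$ will follow from Lemma~\ref{exact} once I show that the induced map $\J^{-1}_{\Xi,!}(Q)\to\J^{-1}_{\Xi,!}(L_\Xi(\lambda))$ is surjective.

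To establish that surjectivity, the key step is to build an $\l_\Xi$-module lift $\tilde{f}\colon M_\Xi(\lambda)\to Q$ whose composition with $Q\proj L_\Xi(\lambda)$ is the standard projection $M_\Xi(\lambda)\proj L_\Xi(\lambda)$. The hypothesis $\mu-\lambda\notin\Z_{\geq0}\Xi$ places $\lambda$ outside $\supp(L_\Xi(\mu))=\mu-\Z_{\geq0}\Xi$, so the weight $\lambda$ component of the short exact sequence yields $Q_\lambda\tosim L_\Xi(\lambda)_\lambda$; picking $\tilde{v}_\lambda\in Q_\lambda$ mapping to the highest weight vector $v_\lambda$ gives a candidate generator. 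For any $\alpha\in\Xi$ the element $e_\alpha\cdot\tilde{v}_\lambda\in Q_{\lambda+\alpha}$ maps to zero in $L_\Xi(\lambda)$, hence lies in $L_\Xi(\mu)_{\lambda+\alpha}$; but $\mu-\lambda-\alpha\in\Z_{\geq0}\Xi$ would force $\mu-\lambda\in\Z_{\geq0}\Xi$, contradicting the hypothesis, so $L_\Xi(\mu)_{\lambda+\alpha}=0$ and $\tilde{v}_\lambda$ is genuinely a highest weight vector. By the universal property of $M_\Xi(\lambda)$, this produces the desired $\tilde{f}$.

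Applying $\J^{-1}_{\Xi,!}$ to the factorisation $M_\Xi(\lambda)\xrightarrow{\tilde{f}}Q\proj L_\Xi(\lambda)$ of the standard projection, we obtain a commutative triangle
$$\J^{-1}_{\Xi,!}(M_\Xi(\lambda))\to\J^{-1}_{\Xi,!}(Q)\to\J^{-1}_{\Xi,!}(L_\Xi(\lambda)).$$
By Proposition~\ref{sing-surj} the composite is surjective (here is where $\Xi$-joyfulness of $\lambda$ enters), so the second arrow is surjective as well. Lemma~\ref{exact} then delivers middle exactness, and combining with the two edge assertions above completes the proof.

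I expect the main obstacle to be verifying that the lift $\tilde{v}_\lambda$ is annihilated by $\n^+_\Xi$; this is the only place where the numerical hypothesis $\mu-\lambda\notin\Z_{\geq0}\Xi$ is used and it is essential both for constructing $\tilde{v}_\lambda$ in the first place and for promoting it to a highest weight vector. Everything else is a formal application of Propositions~\ref{inj-surj} and \ref{sing-surj} together with Lemma~\ref{exact}.
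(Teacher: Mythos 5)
Your proof is correct and takes essentially the same route as the paper's: edge exactness via Proposition~\ref{inj-surj}, then a lift $M_\Xi(\lambda)\to Q$ of the standard projection (using $\mu-\lambda\notin\Z_{\geq0}\Xi$), Proposition~\ref{sing-surj} to get surjectivity of $\J^{-1}_{\Xi,!}(Q)\to\J^{-1}_{\Xi,!}(L_\Xi(\lambda))$, and Lemma~\ref{exact} for middle exactness. Your verification that the lifted vector is genuinely highest weight is in fact more detailed than what the paper records.
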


\begin{proof}
    By Proposition~\ref{inj-surj}, the map $\Ind_{\Xi,!*}(L_\Xi(\mu))\to\Ind_{\Xi,!*}(Q)$ is injective and the map $\Ind_{\Xi,!*}(Q)\to\Ind_{\Xi,!*}(L_\Xi(\lambda))$ is surjective. It remains to check the exactness of the middle term.

    Since $\mu-\lambda\notin\Z_{\geq0}\Xi$, the highest weight vector in $L_\Xi(\lambda)$ must come from a highest weight vector in $Q$. Therefore we have an $\l_\Xi$-module morphism $M_\Xi(\lambda)\to Q$ such that the composition $M_\Xi(\lambda)\to Q\to L_\Xi(\lambda)$ is nonzero. This induces a sequence
    $$\J^{-1}_{\Xi,!}(M_\Xi(\lambda))\to\J^{-1}_{\Xi,!}(Q)\to\J^{-1}_{\Xi,!}(L_\Xi(\lambda)).$$
    The composition $\J^{-1}_{\Xi,!}(M_\Xi(\lambda))\to\J^{-1}_{\Xi,!}(L_\Xi(\lambda))$ is surjective by Proposition~\ref{sing-surj}, so the map $\J^{-1}_{\Xi,!}(Q)\to\J^{-1}_{\Xi,!}(L_\Xi(\lambda))$ is surjective. Now using Lemma~\ref{exact}, we conclude that the middle term
    $$\Ind_{\Xi,!*}(L_\Xi(\mu))\to\Ind_{\Xi,!*}(Q)\to\Ind_{\Xi,!*}(L_\Xi(\lambda))$$
    is exact.
\end{proof}

Now we can state and prove the main result of the section.

\begin{proposition}
    \label{ind-ext}
    Let $\mu,\lambda\in\h^*$ be two integral weights such that $\mu\equiv\lambda\pmod{Q_\Xi}$ and $\mu-\lambda\notin\Z_{\geq0}\Xi$. Moreover, suppose that $\lambda$ is $\Xi$-joyful, then we have isomorphisms
    $$\Ext^1_{\g\wtMod}(L(\lambda),L(\mu))=\Ext^1_{\g\wtMod}(L(\mu),L(\lambda))=\Ext^1_{\l_\Xi\wtMod}(L_\Xi(\lambda),L_\Xi(\mu))=\Ext^1_{\l_\Xi\wtMod}(L_\Xi(\mu),L_\Xi(\lambda)).$$
\end{proposition}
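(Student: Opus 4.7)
The plan is to promote the injection from Proposition~\ref{res-ext} to an isomorphism by constructing an inverse via $\Ind_{\Xi,!*}$. First, the restricted duality functor on the weight categories, together with the corollary on Yoneda Ext's, immediately yields the outer equalities
$$\Ext^1_{\g\wtMod}(L(\lambda),L(\mu))=\Ext^1_{\g\wtMod}(L(\mu),L(\lambda)),\quad\Ext^1_{\l_\Xi\wtMod}(L_\Xi(\lambda),L_\Xi(\mu))=\Ext^1_{\l_\Xi\wtMod}(L_\Xi(\mu),L_\Xi(\lambda)),$$
so it suffices to establish the middle equality $\Ext^1_{\g\wtMod}(L(\lambda),L(\mu))=\Ext^1_{\l_\Xi\wtMod}(L_\Xi(\lambda),L_\Xi(\mu))$.

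A preliminary observation I would record is that the hypotheses $\mu-\lambda\in\Z\Xi$ and $\mu-\lambda\notin\Z_{\geq0}\Xi$ together force $\mu\not>\lambda$: if $\mu-\lambda$ were simultaneously a non-negative $\Z$-combination of $\Pi$ and an element of $\Z\Xi$, then by linear independence of $\Pi$ the coefficients of all simple roots outside $\Xi$ would have to vanish, placing $\mu-\lambda$ back in $\Z_{\geq0}\Xi$. Consequently, Proposition~\ref{res-ext} gives an injection
$$\R\colon\Ext^1_{\g\wtMod}(L(\lambda),L(\mu))\incl\Ext^1_{\l_\Xi\wtMod}(L_\Xi(\lambda),L_\Xi(\mu))$$
defined termwise by $\Res^!_\Xi$, with right-exactness guaranteed by Lemma~\ref{res-exact}.

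Next I would construct a reverse map using minimal parabolic induction. Given an $\l_\Xi$-extension $[0\to L_\Xi(\mu)\to Q\to L_\Xi(\lambda)\to 0]$, the hypotheses $\mu-\lambda\notin\Z_{\geq0}\Xi$ and $\lambda$ being $\Xi$-joyful let me invoke Lemma~\ref{ind-exact} (combined with Corollary~\ref{minimal-maps-hwsimple}) to obtain a short exact sequence $0\to L(\mu)\to\Ind_{\Xi,!*}(Q)\to L(\lambda)\to 0$ in $\g\wtMod$. Functoriality of $\Ind_{\Xi,!*}$ promotes this assignment to a well-defined linear map
$$\I\colon\Ext^1_{\l_\Xi\wtMod}(L_\Xi(\lambda),L_\Xi(\mu))\to\Ext^1_{\g\wtMod}(L(\lambda),L(\mu)).$$

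Finally, I would verify $\R\circ\I=\id$. Applied to $[Q]$, the composition is represented by $\Res^!_\Xi(\Ind_{\Xi,!*}(Q))$; Proposition~\ref{res-ind} supplies the natural isomorphism $\Res^!_\Xi\circ\Ind_{\Xi,!*}\cong\id$, whose naturality, applied to the injection $L_\Xi(\mu)\incl Q$ and the surjection $Q\proj L_\Xi(\lambda)$, upgrades to an isomorphism of entire short exact sequences returning $[Q]$. Since $\R$ is injective and $\R\circ\I=\id$, both $\R$ and $\I$ are mutually inverse isomorphisms. The main delicate point in the argument is ensuring that $\Res^!_\Xi$ applied to the $\Ind_{\Xi,!*}$-extension actually remains short exact (rather than merely left exact), and this is precisely where Lemma~\ref{res-exact} combined with the preliminary observation $\mu\not>\lambda$ is indispensable.
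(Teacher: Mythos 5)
Your proposal is correct and follows essentially the same route as the paper: reduce to the middle equality via restricted duality, use the injection $\R$ from Proposition~\ref{res-ext}, build $\I$ termwise via $\Ind_{\Xi,!*}$ using Lemma~\ref{ind-exact}, and conclude from $\R\circ\I=\id$ (Proposition~\ref{res-ind}) together with injectivity of $\R$. Your explicit remark that the hypotheses force $\mu\not>\lambda$ is a small but accurate addition that the paper handles implicitly inside Proposition~\ref{res-ext} via duality.
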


\begin{proof}
    Using the restricted dual introduced in Section~\ref{km-weight}, it is enough to show that
    $$\Ext^1_{\g\wtMod}(L(\lambda),L(\mu))=\Ext^1_{\l_\Xi\wtMod}(L_\Xi(\lambda),L_\Xi(\mu)).$$
    In Proposition~\ref{res-ext}, we have constructed an inclusion $$\R\colon\Ext^1_{\g\wtMod}(L(\lambda),L(\mu))\incl\Ext^1_{\l_\Xi\wtMod}(L_\Xi(\lambda),L_\Xi(\mu))$$
    that is induced by the functor $\Res^!_\Xi$.

    By Lemma~\ref{ind-exact}, for any element
    $$[0\to L_\Xi(\mu)\to Q\to L_\Xi(\lambda)\to0]\in\Ext^1_{\l_\Xi\wtMod}(L_\Xi(\lambda),L_\Xi(\mu)),$$
    the induced sequence
    $$0\to\Ind_{\Xi,!*}(L_\Xi(\mu))=L(\mu)\to\Ind_{\Xi,!*}(Q)\to\Ind_{\Xi,!*}(L_\Xi(\lambda))=L(\lambda)\to0$$
    is exact, and hence gives an element in $\Ext^1_{\g\wtMod}(L(\lambda),L(\mu))$. This operation induces a linear map
    $$\I\colon\Ext^1_{\l_\Xi\wtMod}(L_\Xi(\lambda),L_\Xi(\mu))\to\Ext^1_{\g\wtMod}(L(\lambda),L(\mu)).$$
    By Proposition~\ref{res-ind}, $\R\circ\I=\id$. Combining with the fact that $\R$ is injective, we conclude that $\R$ induces an isomorphism $\R\colon\Ext^1_{\g\wtMod}(L(\lambda),L(\mu))\xrightarrow{\sim}\Ext^1_{\l_\Xi\wtMod}(L_\Xi(\lambda),L_\Xi(\mu))$.
\end{proof}

\subsection{Annihilators}

In this section, let us assume $\Xi$ is of finite type, i.e. $\l_\Xi$ is a finite dimensional reductive Lie algebra.

Let $\lambda\in\h^*$ be a $\rho$-anti-dominant integral weight for $\l_\Xi$, i.e. $\langle\lambda+\rho,\alpha^\vee\rangle\in\Z_{\leq0}$ for any $\alpha^\vee\in\Xi^\vee$. The following statement is a direct consequence of Duflo's theorem on annihilators of Verma modules \cite{duflo1973construction}.

\begin{proposition}
    Let $\lambda\in\h^*$ be a $\rho$-anti-dominant integral weight for $\l_\Xi$, then for any $w\in W_\Xi$, we have
    $$\Ann_{U\l_\Xi}(M_\Xi(\lambda))=\Ann_{U\l_\Xi}(M_\Xi(w\cdot\lambda)).$$
\end{proposition}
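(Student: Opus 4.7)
The plan is to invoke Duflo's classical theorem on annihilators of Verma modules as a black box and verify that its hypotheses are met in the setting at hand. This will give the desired equality with minimal extra work.

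First, I would use the antidominance of $\lambda$: the condition $\langle\lambda+\rho,\alpha^\vee\rangle\in\Z_{\leq0}$ for every simple coroot $\alpha^\vee\in\Xi^\vee$ forces $\lambda$ to be the unique antidominant representative of its dot orbit $W_\Xi\cdot\lambda$ in $\l_\Xi$. In particular, $\lambda\leq w\cdot\lambda$ in the standard partial order for every $w\in W_\Xi$, so the BGG embedding theorem for Verma modules over a finite dimensional reductive Lie algebra yields a nonzero (hence injective) homomorphism $M_\Xi(\lambda)\incl M_\Xi(w\cdot\lambda)$. From this embedding one direction of the equality is immediate:
$$\Ann_{U\l_\Xi}(M_\Xi(w\cdot\lambda))\subset\Ann_{U\l_\Xi}(M_\Xi(\lambda)).$$

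For the reverse inclusion, I would invoke Duflo's theorem \cite{duflo1973construction}, which in this setting asserts that for an antidominant weight $\lambda$ the annihilator $\Ann_{U\l_\Xi}(M_\Xi(\mu))$ is constant on the dot orbit $\mu\in W_\Xi\cdot\lambda$. Equivalently, $\Ann_{U\l_\Xi}(M_\Xi(\lambda))$ coincides with the minimal primitive ideal in its central character block; in particular it agrees with $\Ann_{U\l_\Xi}(L_\Xi(\lambda))$, since $M_\Xi(\lambda)=L_\Xi(\lambda)$ by antidominance.

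The main obstacle is the reverse inclusion, which is genuinely the content of Duflo's result and relies on his analysis of primitive ideals via central characters and the interplay of Verma modules with the Harish-Chandra decomposition of $Z(\l_\Xi)$. I plan to take this as a black box rather than reprove it, as the author explicitly cites it as the source of the proposition.
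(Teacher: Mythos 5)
Your proposal matches the paper's treatment: the paper gives no independent proof and simply notes that the statement is a direct consequence of Duflo's theorem on annihilators of Verma modules, which is exactly the black box you invoke (Duflo in fact gives $\Ann_{U\l_\Xi}(M_\Xi(\mu))=U\l_\Xi\cdot\ker\chi_\mu$, so both inclusions follow at once from $\chi_\lambda=\chi_{w\cdot\lambda}$, making your separate BGG-embedding argument for one direction correct but not needed). This is the same approach, and it is fine.
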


We prove that this property is stable under minimal parabolic induction.

\begin{proposition}
    \label{ann}
    Let $\lambda\in\h^*$ be a $\rho$-anti-dominant integral weight for $\l_\Xi$, then for any $w\in W_\Xi$, we have
    $$\Ann_{U\g}(\Ind_{\Xi,!*}(M_\Xi(\lambda)))=\Ann_{U\g}(\Ind_{\Xi,!*}(M_\Xi(w\cdot\lambda))).$$
\end{proposition}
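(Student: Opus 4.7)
The plan is to reduce the proposition to the preceding Duflo-type proposition via a careful analysis of the cyclic structure of $\Ind_{\Xi,!*}$.

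Since $\lambda$ is $\rho$-anti-dominant integral for $\l_\Xi$, the Verma module $M_\Xi(\lambda)=L_\Xi(\lambda)$ is simple, so Corollary~\ref{minimal-maps-hwsimple} gives $\Ind_{\Xi,!*}(M_\Xi(\lambda))=L(\lambda)$. Standard BGG theory for the reductive Lie algebra $\l_\Xi$ furnishes an embedding $L_\Xi(\lambda)\incl M_\Xi(w\cdot\lambda)$ for every $w\in W_\Xi$; by Proposition~\ref{inj-surj} this lifts to $L(\lambda)\incl\Ind_{\Xi,!*}(M_\Xi(w\cdot\lambda))$, giving the easy inclusion
\[\Ann_{U\g}(\Ind_{\Xi,!*}(M_\Xi(w\cdot\lambda)))\subset\Ann_{U\g}(L(\lambda))=\Ann_{U\g}(\Ind_{\Xi,!*}(M_\Xi(\lambda))).\]

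The reverse inclusion is the main content. Since $\Ind_{\Xi,!*}(M_\Xi(w\cdot\lambda))$ is cyclic, generated by the image $\bar v_{w\cdot\lambda}$ of $1\otimes v_{w\cdot\lambda}\in M(w\cdot\lambda)=\Ind_{\Xi,!}(M_\Xi(w\cdot\lambda))$, it suffices to show $\Ann_{U\g}(L(\lambda))\cdot v_{w\cdot\lambda}\subset\J^{-1}_{\Xi,!}(M_\Xi(w\cdot\lambda))$ inside $M(w\cdot\lambda)$. By Remark~\ref{max-new}, this amounts to proving that the $\g$-submodule $\Ann_{U\g}(L(\lambda))\cdot v_{w\cdot\lambda}\subset M(w\cdot\lambda)$ has zero intersection with $1\otimes M_\Xi(w\cdot\lambda)=U\l_\Xi\cdot v_{w\cdot\lambda}$. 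I would unwind this via PBW: for $u\in\Ann_{U\g}(L(\lambda))$, writing $u=\sum_i X_iP_i$ in $U\g=U\u^-_\Xi\otimes U\p^+_\Xi$ (with $X_i$ in a PBW basis of $U\u^-_\Xi$ and $P_i\in U\p^+_\Xi$), the condition $u\cdot v_{w\cdot\lambda}\in U\l_\Xi\cdot v_{w\cdot\lambda}$ forces $P_i\cdot v_{w\cdot\lambda}=0$ for all $i$ with $X_i\neq 1$, and the residual action equals $z\cdot v_{w\cdot\lambda}$ where $z=\phi\bigl(\sum_{i:\,X_i=1}P_i\bigr)\in U\l_\Xi$ (here $\phi\colon U\p^+_\Xi\proj U\l_\Xi$). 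By Duflo, $\Ann_{U\l_\Xi}(M_\Xi(w\cdot\lambda))=\Ann_{U\l_\Xi}(L_\Xi(\lambda))$, so it suffices to show $z\in\Ann_{U\l_\Xi}(L_\Xi(\lambda))$.

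The main obstacle is this last step: converting the hypothesis $u\in\Ann_{U\g}(L(\lambda))$ into the $\l_\Xi$-level conclusion $z\in\Ann_{U\l_\Xi}(L_\Xi(\lambda))$. I expect to handle it by invoking the BGG singular vector $f\in U\n^-_\Xi$ realizing $L_\Xi(\lambda)\incl M_\Xi(w\cdot\lambda)$ (so that $v_\lambda=f\cdot v_{w\cdot\lambda}$ inside $M(w\cdot\lambda)$) together with the two-sidedness of $\Ann_{U\g}(L(\lambda))$; the annihilation $u\cdot v_\lambda=0$ in $L(\lambda)$ can then be transported back to control the action of $z$ on $L_\Xi(\lambda)$. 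This delicate interlocking of the PBW decomposition, the BGG embedding, and Duflo is the technical crux of the argument.
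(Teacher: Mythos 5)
Your skeleton matches the paper's: the easy inclusion comes from the socle embedding $L_\Xi(\lambda)\incl M_\Xi(w\cdot\lambda)$ together with Proposition~\ref{inj-surj}, and the hard inclusion is correctly reduced, via Remark~\ref{max-new}, the cyclicity of $\Ind_{\Xi,!*}(M_\Xi(w\cdot\lambda))$, the two-sidedness of $I:=\Ann_{U\g}(L(\lambda))$, and the PBW isolation of the component $z\in U\l_\Xi$ of $u$, to the single claim that $z\in\Ann_{U\l_\Xi}(L_\Xi(\lambda))=\Ann_{U\l_\Xi}(M_\Xi(w\cdot\lambda))$ (Duflo). Up to that point everything is sound.

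However, the step you yourself flag as the crux --- deducing $z\in\Ann_{U\l_\Xi}(L_\Xi(\lambda))$ from $u\in\Ann_{U\g}(L(\lambda))$ --- is left unproved, and the mechanism you sketch for it (the singular vector $f$ with $v_\lambda=f\cdot v_{w\cdot\lambda}$, transported via two-sidedness) does not obviously close: the relation $u\cdot v_\lambda=0$ only controls $z$ on the single vector $v_\lambda$, and replacing $u$ by $uYf$ to reach the other vectors of $L_\Xi(\lambda)$ destroys the hypothesis that the element under consideration acts into $1\otimes M_\Xi(w\cdot\lambda)$, so your PBW isolation no longer applies. The missing idea, which is exactly how the paper closes the argument, is to exploit the hypothesis on $L(\lambda)$ in its incarnation $L(\lambda)=\Ind_{\Xi,!*}(L_\Xi(\lambda))=\Ind_{\Xi,!*}(M_\Xi(\lambda))$: for \emph{every} $m\in M_\Xi(\lambda)$ one has $u\cdot(1\otimes m)\in\J^{-1}_{\Xi,!}(M_\Xi(\lambda))\subset\u^-_\Xi(U\u^-_\Xi)\otimes M_\Xi(\lambda)$ by Proposition~\ref{j-1}; since the summands of $u\cdot(1\otimes m)$ with nontrivial $\u^-_\Xi$-part already lie in $\u^-_\Xi(U\u^-_\Xi)\otimes M_\Xi(\lambda)$, the direct-sum decomposition $U\u^-_\Xi\otimes M_\Xi(\lambda)=(1\otimes M_\Xi(\lambda))\oplus(\u^-_\Xi(U\u^-_\Xi)\otimes M_\Xi(\lambda))$ forces $1\otimes(z\cdot m)=0$, hence $z\in\Ann_{U\l_\Xi}(M_\Xi(\lambda))$. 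With this substitution your argument becomes complete and is essentially the paper's proof, which runs the same computation contrapositively: it produces $\bar{v}_\lambda=X_1\cdot\bar{v}_{w\cdot\lambda}$ with $X_1\in\Ann_{U\l_\Xi}(M_\Xi(\lambda))$ and derives a contradiction.
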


\begin{proof}
    Since $\lambda$ is a $\rho$-anti-dominant integral weight for $\l_\Xi$, $M_\Xi(\lambda)=L_\Xi(\lambda)$ is simple, and $L_\Xi(\lambda)$ is the unique simple submodule of $M_\Xi(w\cdot\lambda)$ (its socle). The inclusion $$M_\Xi(\lambda)=L_\Xi(\lambda)\incl M_\Xi(w\cdot\lambda)$$
    induces an inclusion
    $$\Ind_{\Xi,!*}(M_\Xi(\lambda))\incl\Ind_{\Xi,!*}(M_\Xi(w\cdot\lambda)).$$
    Therefore $\Ann_{U\g}(\Ind_{\Xi,!*}(M_\Xi(\lambda)))\supset\Ann_{U\g}(\Ind_{\Xi,!*}(M_\Xi(w\cdot\lambda)))$. Let $I=\Ann_{U\g}(\Ind_{\Xi,!*}(M_\Xi(\lambda)))$, $I$ is a two-sided ideal of $U\g$. It remains to prove that $I.\Ind_{\Xi,!*}(M_\Xi(w\cdot\lambda))=0$.

    Suppose $I.\Ind_{\Xi,!*}(M_\Xi(w\cdot\lambda))\neq0$, pick any nonzero $\bar{v}\in I.\Ind_{\Xi,!*}(M_\Xi(w\cdot\lambda))$. Let $v$ be a preimage of $\bar{v}$ in $\Ind_{\Xi,!}(M_\Xi(w\cdot\lambda))=U\u^-_\Xi\otimes M_\Xi(\lambda)$. Since $\bar{v}$ is nonzero in $\Ind_{\Xi,!*}(M_\Xi(w\cdot\lambda))$, $U\g.v$ has nonzero intersection with $1\otimes M_\Xi(w\cdot\lambda)$ (Remark~\ref{max-new}). Noting that $M_\Xi(\lambda)$ is the only simple submodule of $M_\Xi(w\cdot\lambda)$, we have
    $$1\otimes M_\Xi(\lambda)\subset(U\g.v)\cap(1\otimes M_\Xi(w\cdot\lambda)).$$

    Let $v_\lambda$ (resp. $v_{w\cdot\lambda}$) be the highest weight vector of $M_\Xi(\lambda)$ (resp. $M_\Xi(w\cdot\lambda)$), $\bar{v}_\lambda$ (resp. $\bar{v}_{w\cdot\lambda}$) be the image of $1\otimes v_\lambda$ (resp. $1\otimes v_{w\cdot\lambda}$) in $\Ind_{\Xi,!*}(M_\Xi(\lambda))$ (resp. $\Ind_{\Xi,!*}(M_\Xi(w\cdot\lambda))$), then
    $$\bar{v}_\lambda\in U\g.\bar{v}\subset I.\Ind_{\Xi,!*}(M_\Xi(w\cdot\lambda))=IU\g.\bar{v}_{w\cdot\lambda}=I.\bar{v}_{w\cdot\lambda}.$$

    Choose $X\in I\subset U\g=U\u^-_\Xi\otimes U\l_\Xi\otimes U\u^+_\Xi$ such that $\bar{v}_\lambda=X.\bar{v}_{w\cdot\lambda}$. Let us decompose $X$ as $X=X_1+X_2+X_3$ for some
    $$X_1\in1\otimes U\l_\Xi\otimes 1=U\l_\Xi,X_2\in\u^-_\Xi(U\u^-_\Xi)\otimes U\l_\Xi\otimes1,X_3\in U\u^-_\Xi\otimes U\l_\Xi\otimes\u^+_\Xi(U\u^+_\Xi).$$
    Since $\bar{v}_{w\cdot\lambda}$ is a highest weight vector, $X_3.\bar{v}_{w\cdot\lambda}=0$, so
    $$\bar{v}_\lambda=X_1.\bar{v}_{w\cdot\lambda}+X_2.\bar{v}_{w\cdot\lambda}.$$
    Noticing that $w\cdot\lambda-\lambda\in Q_\Xi$, we have, by Observation~\ref{weight}, $\bar{v}_\lambda=X_1.\bar{v}_{w\cdot\lambda}$. But $X\in I=\Ann_{U\g}(\Ind_{\Xi,!*}(M_\Xi(\lambda)))$, for any $w\in M_\Xi(\lambda)$,
    $$0=X.\bar{w}=X_1.\bar{w}+X_2.\bar{w}.$$
    Here $\bar{w}$ means the image of $1\otimes w$ in $\Ind_{\Xi,!*}(M_\Xi(\lambda))$. Therefore
    $$1\otimes(X_1.w)+X_2.(1\otimes w)\in\J^{-1}_{\Xi,!}(M_\Xi(\lambda))\subset\u^-_\Xi(U\u^-_\Xi)\otimes M_\Xi(\lambda).$$
    Thus $X_1.w=0$. Consequently,
    $$X_1\in\Ann_{U\l_\Xi}(M_\Xi(\lambda))=\Ann_{U\l_\Xi}(M_\Xi(w\cdot\lambda)).$$
    This implies that $\bar{v}_\lambda=X_1.\bar{v}_{w\cdot\lambda}=0$, which is absurd. So $I$ annihilates $\Ind_{\Xi,!*}(M_\Xi(w\cdot\lambda))$, we are done.
\end{proof}

\begin{corollary}
    Let $\lambda\in\h^*$ be a $\rho$-anti-dominant integral weight for $\l_\Xi$, then for any $w\in W_\Xi,$ we have
    $$\Ann_{U\g}(L(\lambda))\subset\Ann_{U\g}(L(w\cdot\lambda)).$$
\end{corollary}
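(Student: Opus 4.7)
The plan is to derive the corollary directly from Proposition~\ref{ann} by rewriting both sides of the desired inclusion as annihilators of $\Ind_{\Xi,!*}$ applied to Verma modules, and then exploiting the $\rho$-anti-dominance on one side and the surjection $M_\Xi(w\cdot\lambda)\proj L_\Xi(w\cdot\lambda)$ on the other.

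First I would identify the left-hand side. Since $\lambda$ is $\rho$-anti-dominant integral for $\l_\Xi$, the $\l_\Xi$-Verma module $M_\Xi(\lambda)$ is already simple: $M_\Xi(\lambda)=L_\Xi(\lambda)$. Applying Corollary~\ref{minimal-maps-hwsimple} then gives
$$\Ind_{\Xi,!*}(M_\Xi(\lambda))=\Ind_{\Xi,!*}(L_\Xi(\lambda))=L(\lambda),$$
so $\Ann_{U\g}(L(\lambda))=\Ann_{U\g}(\Ind_{\Xi,!*}(M_\Xi(\lambda)))$.

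Next I would handle the right-hand side. The canonical surjection $M_\Xi(w\cdot\lambda)\proj L_\Xi(w\cdot\lambda)$ in $\l_\Xi\wtMod$ induces, by Proposition~\ref{inj-surj}, a surjection
$$\Ind_{\Xi,!*}(M_\Xi(w\cdot\lambda))\proj\Ind_{\Xi,!*}(L_\Xi(w\cdot\lambda))=L(w\cdot\lambda),$$
where the last equality is again Corollary~\ref{minimal-maps-hwsimple}. Any two-sided ideal that annihilates the source also annihilates the target, whence $\Ann_{U\g}(\Ind_{\Xi,!*}(M_\Xi(w\cdot\lambda)))\subset\Ann_{U\g}(L(w\cdot\lambda))$.

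Chaining these two observations through Proposition~\ref{ann} yields
$$\Ann_{U\g}(L(\lambda))=\Ann_{U\g}(\Ind_{\Xi,!*}(M_\Xi(\lambda)))=\Ann_{U\g}(\Ind_{\Xi,!*}(M_\Xi(w\cdot\lambda)))\subset\Ann_{U\g}(L(w\cdot\lambda)),$$
which is exactly the desired inclusion. There is no real obstacle here, since all the substantive work has been absorbed into Proposition~\ref{ann}; the only point worth double-checking is that the $\rho$-anti-dominance hypothesis actually forces $M_\Xi(\lambda)=L_\Xi(\lambda)$, which is a standard consequence of Duflo's theorem (or the usual classification of simple Vermas) in the finite-type setting that has already been invoked above.
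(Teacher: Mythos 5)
Your proposal is correct and follows essentially the same route as the paper's own proof: identify $\Ann_{U\g}(L(\lambda))$ with $\Ann_{U\g}(\Ind_{\Xi,!*}(M_\Xi(\lambda)))$ via $M_\Xi(\lambda)=L_\Xi(\lambda)$ and Corollary~\ref{minimal-maps-hwsimple}, apply Proposition~\ref{ann}, and conclude using the surjection $\Ind_{\Xi,!*}(M_\Xi(w\cdot\lambda))\proj L(w\cdot\lambda)$ furnished by Proposition~\ref{inj-surj}. No gaps; the argument matches the paper step for step.
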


\begin{proof}
    Since $\lambda$ is a $\rho$-anti-dominant integral weight for $\l_\Xi$, $L_\Xi(\lambda)=M_\Xi(\lambda)$. By Corollary~\ref{minimal-maps-hwsimple},
    $$L(\lambda)=\Ind_{\Xi,!*}(L_\Xi(\lambda))=\Ind_{\Xi,!*}(M_\Xi(\lambda)).$$
    From Proposition~\ref{ann}, we know that
    $$\Ann_{U\g}(L(\lambda))=\Ann_{U\g}(\Ind_{\Xi,!*}(M_\Xi(\lambda)))=\Ann_{U\g}(\Ind_{\Xi,!*}(M_\Xi(w\cdot\lambda))).$$
    The quotient $M_\Xi(w\cdot\lambda)\proj L_\Xi(w\cdot\lambda)$ induces a surjection $\Ind_{\Xi,!*}(M_\Xi(w\cdot\lambda))\proj\Ind_{\Xi,!*}(L_\Xi(w\cdot\lambda))=L(w\cdot\lambda)$ (Proposition~\ref{inj-surj}), so
    $$\Ann_{U\g}(L(\lambda))=\Ann_{U\g}(\Ind_{\Xi,!*}(M_\Xi(w\cdot\lambda)))\subset\Ann_{U\g}(L(w\cdot\lambda)).$$
\end{proof}

\section{Minimal type}

To give a hopefully more accessible orientation to our general theory, let us consider the minimal parabolic type, i.e. $\Xi=\{\alpha\}$ consists of a single simple root $\alpha$. For brevity, let us write $\p^+_\alpha$ for $\p^+_{\{\alpha\}}$, $\Ind_{\alpha,!*}$ for $\Ind_{\{\alpha\},!*}$, etc. In this case, $\l_\alpha$ is a finite dimensional reductive Lie algebra, whose derived subalgebra $\s_\alpha=[\l_\alpha,\l_\alpha]$ is isomorphic to $\sl_2$. Let $\z_\alpha$ be the center of $\l_\alpha$, then $\z_\alpha\in\h$ and $\l_\alpha=\s_\alpha\oplus\z_\alpha$.

For any $n\in\Z_{\geq0}$ and $\xi\in\z_\alpha^*\subset\h^*$, we have a non-split short exact sequence
$$0\to M_\alpha\left(-\frac{n+2}{2}\alpha+\xi\right)\to M_\alpha\left(\frac{n}{2}\alpha+\xi\right)\to L_\alpha\left(\frac{n}{2}\alpha+\xi\right)\to0.$$
Notice that $-\frac{n+2}{2}\alpha+\xi$ is a $\rho$-anti-dominant integral weight for $\l_\alpha$, so $M_\alpha\left(-\frac{n+2}{2}\alpha+\xi\right)=L_\alpha\left(-\frac{n+2}{2}\alpha+\xi\right)$ is simple. By applying minimal parabolic induction $\Ind_{\alpha,!*}$, we obtain a sequence
$$0\to L\left(-\frac{n+2}{2}\alpha+\xi\right)\to\Ind_{\alpha,!*}\left(M_\alpha\left(\frac{n}{2}\alpha+\xi\right)\right)\to L\left(\frac{n}{2}\alpha+\xi\right)\to0.$$

By Proposition~\ref{ann}, we have
$$\Ann_{U\g}\left(L\left(-\frac{n+2}{2}\alpha+\xi\right)\right)=\Ann_{U\g}\left(\Ind_{\alpha,!*}\left(M_\alpha\left(\frac{n}{2}\alpha+\xi\right)\right)\right)\subset\Ann_{U\g}\left(L\left(\frac{n}{2}\alpha+\xi\right)\right)$$
in the above sequence.

Suppose, in addition, that the weight $\frac{n}{2}\alpha+\xi$ is integral and is $\alpha$-joyful, then by Proposition~\ref{ind-ext}, the above sequence is exact. It does not split, because after precomposing with $\Res^!_\alpha$, it does not split. This gives rise to a nonzero element in $\Ext^1_{\g\wtMod}\left(L\left(\frac{n}{2}\alpha+\xi\right),L\left(-\frac{n+2}{2}\alpha+\xi\right)\right)$.

\begin{example}
    Let us consider $\g=\widehat{E_8}$ to be the untwited affine Kac--Moody algebra associated to $E_8$. Let $\Pi=\{\alpha_0,\alpha_1,\cdots,\alpha_8\}$ be the set of simple roots of $\widehat{E_8}$, where $\alpha_1,\cdots,\alpha_8$ are those finite roots of finite $E_8$. Here we are referring to the Bourbaki convention on root systems \cite{bourbakigroupes}.
    
    Recall that in Section~\ref{transpose-anti-involution}, we have introduced the Chevalley generators $e_i\in\g_{\alpha_i}$, $f_i\in\g_{-\alpha_i}$. Let $\Lambda_0,\cdots,\Lambda_8$ be the fundamental weights of $\widehat{E_8}$. Let $s_i$ be the simple reflection in $W$ associated to the simple root $\alpha_i$.

    Notice that $-\Lambda_4=s_2\cdot(-\Lambda_4)+\alpha_2$. As described above, we have a sequence
    \begin{equation}
        \label{sequence}
        0\to L(s_2\cdot(-\Lambda_4))\to\Ind_{\alpha_2,!*}(M_{\alpha_2}(-\Lambda_4))\to L(-\Lambda_4)\to0,
    \end{equation}
    where
    $$\Ann_{U\widehat{E_8}}(L(s_2\cdot(-\Lambda_4)))=\Ann_{U\widehat{E_8}}(\Ind_{\alpha_2,!*}(M_{\alpha_2}(-\Lambda_4)))\subset\Ann_{U\widehat{E_8}}(L(-\Lambda_4)).$$

    In this case, we can use a tricky way to show that the sequence~\eqref{sequence} is exact. Let $L_{-6}(E_8)$ be the simple affine vertex algebra associated to $E_8$ of level $-6$. By the classification of \cite{arakawa2018joseph}, we know that $L(s_2\cdot(-\Lambda_4))$ is an $L_{-6}(E_8)$-module. Since $\Ann_{U\widehat{E_8}}(L(s_2\cdot(-\Lambda_4)))=\Ann_{U\widehat{E_8}}(\Ind_{\alpha_2,!*}(M_{\alpha_2}(-\Lambda_4)))$, $\Ind_{\alpha_2,!*}(M_{\alpha_2}(-\Lambda_4))$ is also an $L_{-6}(E_8)$-module.\footnote{Recall that $L_{-6}(E_8)=V^{-6}(E_8)/J$. Here $V^{-6}(E_8)$ is the universal affine vertex algebra associated to $E_8$ of level $-6$, $J$ is some vertex algebra ideal. A $V^{-6}(E_8)$-module $M$ is an $L_{-6}(E_8)$-module if and only if $Y(v,z).m=0$ for any $v\in J$ and $m\in M$, where $Y(v,z)=\sum_{n\in\Z}v_{(n)}z^{-n-1}$ is the field corresponding to $v$. Notice that we can write the Fourier modes $v_{(n)}$ as elements from $U\widehat{E_8}$. More precisely, $v_{(n)}\in\End(M)$ lies in the image of $U\widehat{E_8}\to\End(M)$ for any $n\in\Z$. Therefore, the $V^{-6}(E_8)$-module $M$ is an $L_{-6}(E_8)$-module if and only if $v_{(n)}\in\Ann_{U\widehat{E_8}}(M)$ for any $v\in J$.} Again using the classification in \cite{arakawa2018joseph}, we see that each composition factor of $\Ind_{\alpha_2,!*}(M_{\alpha_2}(-\Lambda_4))$ is of the form $L(w\cdot(-\Lambda_4))$ for some
    $$w\in\{\id,s_2,s_3,s_1s_3,s_5,s_6s_5,s_7s_6s_5,s_8s_7s_6s_5,s_0s_8s_7s_6s_5\}.$$
    We have
    \begin{align*}
        -\Lambda_4&=s_2\cdot(-\Lambda_4)+\alpha_2,\\
        -\Lambda_4&=s_3\cdot(-\Lambda_4)+\alpha_3,\\
        s_3\cdot(-\Lambda_4)&=s_1s_3\cdot(-\Lambda_4)+2\alpha_1,\\
        -\Lambda_4&=s_5\cdot(-\Lambda_4)+\alpha_5,\\
        s_5\cdot(-\Lambda_4)&=s_6s_5\cdot(-\Lambda_4)+2\alpha_6,\\
        s_6s_5\cdot(-\Lambda_4)&=s_7s_6s_5\cdot(-\Lambda_4)+3\alpha_7,\\
        s_7s_6s_5\cdot(-\Lambda_4)&=s_8s_7s_6s_5\cdot(-\Lambda_4)+4\alpha_8,\\
        s_8s_7s_6s_5\cdot(-\Lambda_4)&=s_0s_8s_7s_6s_5\cdot(-\Lambda_4)+5\alpha_0.
    \end{align*}
    Let $v_{-\Lambda_4}$ be the highest weight vector in $M_{\alpha_2}(-\Lambda_4)$, and $\bar{v}_{-\Lambda_4}$ be the image of $1\otimes v_{-\Lambda_4}$ in $\Ind_{\alpha_2,!*}(M_{\alpha_2}(-\Lambda_4))$. For $i\in\{0,1,3,5,6,7,8\}$, we have
    $$e_jf_i.(1\otimes v_{-\Lambda_4})=f_ie_j.(1\otimes v_{-\Lambda_4})=0.$$
    for $j\in\{0,\cdots,8\}\backslash\{i\}$. Moreover,
    $$e_if_i.(1\otimes v_{-\Lambda_4})=f_ie_i.(1\otimes v_{-\Lambda_4})+\alpha_i^\vee.(1\otimes v_{-\Lambda_4})=0+\langle-\Lambda_4,\alpha_i^\vee\rangle(1\otimes v_{-\Lambda_4})=0.$$
    Therefore, $f_i.(1\otimes v_{-\Lambda_4})$ is a (possibly zero) highest weight vector in $\Ind_{\alpha_2,!}(M_{\alpha_2}(-\Lambda_4))=U\u^-_{\alpha_2}\otimes M_{\alpha_2}(-\Lambda_4)$. In particular, it generates an $\widehat{E_8}$-submodule of $\Ind_{\alpha_2,!}(M_{\alpha_2}(-\Lambda_4))=U\u^-_{\alpha_2}\otimes M_{\alpha_2}(-\Lambda_4)$ that has zero intersection with $1\otimes M_{\alpha_2}(-\Lambda_4)$. By Remark~\ref{max-new}, we see that this submodule lies in $\J^{-1}_{\alpha_2,!}(M_{\alpha_2}(-\Lambda_4))$, and hence $f_i.\bar{v}_{-\Lambda_4}=0$ in $\Ind_{\alpha_2,!*}(M_{\alpha_2}(-\Lambda_4))$. This shows that $L(w\cdot(-\Lambda_4))$ cannot appear as a composition factor of $\Ind_{\alpha_2,!*}(M_{\alpha_2}(-\Lambda_4))$ for $w\in\{s_3,s_1s_3,s_5,s_6s_5,s_7s_6s_5,s_8s_7s_6s_5,s_0s_8s_7s_6s_5\}$, and hence the only possible composition factors of $\Ind_{\alpha_2,!*}(M_{\alpha_2}(-\Lambda_4))$ are $L(-\Lambda_4)$ and $L(s_2\cdot(-\Lambda_4))$. Notice that the weight spaces $\Ind_{\alpha_2,!}(M_{\alpha_2}(-\Lambda_4))_{-\Lambda_4}$ and $\Ind_{\alpha_2,!}(M_{\alpha_2}(-\Lambda_4))_{s_2\cdot(-\Lambda_4)}$ are all one dimensional, so both $L(-\Lambda_4)$ and $L(s_2\cdot(-\Lambda_4))$ can appear at most once as composition factors of $\Ind_{\alpha_2,!*}(M_{\alpha_2}(-\Lambda_4))$. This shows that exactness of the sequence~\eqref{sequence}. Moreover, we see that this is a short exact sequence of $L_{-6}(E_8)$-modules.
    
    By similar careful analysis, we see that the extension quiver of simple highest weight $L_{-6}(E_8)$-modules is the same as the double quiver of the affine Dynkin quiver of type $E_8^{(1)}$.

    \begin{theorem}
        Consider the diagram
        $$\begin{tikzcd}
 &        &        &            &                                               & s_2 \arrow[d, bend left]                                           &                                               &                             \\
s_0s_8s_7s_6s_5 \arrow[r, bend left] & s_8s_7s_6s_5 \arrow[r, bend left] \arrow[l, bend left] & s_7s_6s_5 \arrow[r, bend left] \arrow[l, bend left] & s_6s_5 \arrow[r, bend left] \arrow[l, bend left] & s_5 \arrow[r, bend left] \arrow[l, bend left] & \id \arrow[r, bend left] \arrow[l, bend left] \arrow[u, bend left] & s_3 \arrow[r, bend left] \arrow[l, bend left] & s_1s_3 \arrow[l, bend left]
\end{tikzcd}$$
    Then we have
    $$\dim\Ext^1_{\widehat{E_8}\wtMod}(L(w\cdot(-\Lambda_4)),L(w'\cdot(-\Lambda_4)))=\#\text{ of arrows from }w\text{ to }w'\text{ in the above diagram}$$
    for $w,w'\in\{\id,s_2,s_3,s_1s_3,s_5,s_6s_5,s_7s_6s_5,s_8s_7s_6s_5,s_0s_8s_7s_6s_5\}$.

    Furthermore, all nontrivial extensions are in fact extensions of $L_{-6}(E_8)$-modules.
    \end{theorem}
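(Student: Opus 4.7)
The strategy mirrors the explicit analysis carried out just before the theorem statement, applied systematically to every pair $(w,w')$. The computation splits into two tasks: producing a $1$-dimensional $\Ext^1$ for every edge of the diagram, and proving vanishing for every non-edge. Throughout, I would label each edge $\{w,w'\}$ by the unique simple root $\alpha_i$ with $w\cdot(-\Lambda_4)-w'\cdot(-\Lambda_4)\in\Z\alpha_i\setminus\{0\}$ (read off from the list of relations between the nine weights given before the theorem).

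\textbf{Edges.} Fix such an edge with simple root $\alpha_i$; write $\lambda$ for the lower weight and $\mu$ for the higher, so $\mu-\lambda\in\Z_{>0}\alpha_i$. I would apply Proposition~\ref{ind-ext} with $\Xi=\{\alpha_i\}$: the congruence $\mu\equiv\lambda\pmod{Q_\Xi}$ and the non-positivity condition $\lambda-\mu\notin\Z_{\geq0}\Xi$ are automatic, leaving only $\Xi$-joyfulness of $\lambda$ to check. For this I would invoke Proposition~\ref{gen-singular}, verifying that $N(\lambda)$ is generated by its singular vectors using the Kac--Kazhdan description of submodules of Verma modules for integral weights of the form $w\cdot(-\Lambda_4)$. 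Proposition~\ref{ind-ext} then identifies
$$\Ext^1_{\widehat{E_8}\wtMod}(L(\mu),L(\lambda))=\Ext^1_{\l_{\alpha_i}\wtMod}(L_{\alpha_i}(\mu),L_{\alpha_i}(\lambda)),$$
and since $\l_{\alpha_i}\cong\s_{\alpha_i}\oplus\z_{\alpha_i}$ with $\s_{\alpha_i}\simeq\sl_2$, the right-hand side is $1$-dimensional by the standard non-split sequence $0\to L_{\alpha_i}(s_{\alpha_i}\cdot\mu)\to M_{\alpha_i}(\mu)\to L_{\alpha_i}(\mu)\to 0$; the explicit induced extension in the preceding example exhibits a nonzero element, so the dimension is exactly $1$. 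The $L_{-6}(E_8)$-module assertion is then inherited as in that example: by Proposition~\ref{ann}, $\Ind_{\alpha_i,!*}(M_{\alpha_i}(\mu))$ has the same annihilator as $L(s_{\alpha_i}\cdot\mu)$, which is an $L_{-6}(E_8)$-module by \cite{arakawa2018joseph}, and the footnote's Fourier-mode argument applies verbatim.

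\textbf{Non-edges.} There are two subcases. If $w\cdot(-\Lambda_4)$ and $w'\cdot(-\Lambda_4)$ are incomparable in the order $\leq$ on $\h^*$ (e.g.\ $(s_2,s_3)$, $(s_2,s_5)$, $(s_3,s_5)$), any extension $0\to L(\lambda)\to P\to L(\mu)\to0$ in $\g\wtMod$ has $\mu$ as a maximal weight of $P$, so the highest weight vector of $L(\mu)$ lifts to a highest weight vector of $P$; the resulting map $M(\mu)\to P$ has image intersecting $L(\lambda)$ in a submodule that must be zero, since $\lambda$ is not a weight of $M(\mu)$ under incomparability. The sequence therefore splits, giving $\Ext^1=0$. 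For the comparable non-edge pairs (e.g.\ $(\id,s_1s_3)$, $(\id,s_6s_5)$, etc.) I would apply Proposition~\ref{res-ext} with $\Xi$ the set of simple roots appearing in the difference $\mu-\lambda$, reducing the question to vanishing of $\Ext^1$ between simple highest weight modules of the finite-type Levi $\l_\Xi$, where it follows from the standard Kazhdan--Lusztig description of $\Ext^1$ in a regular integral block (non-adjacent pairs in the corresponding Bruhat graph have trivial $\Ext^1$). Self-extensions $\Ext^1(L(w\cdot(-\Lambda_4)),L(w\cdot(-\Lambda_4)))=0$ follow from the same lift-to-highest-weight argument.

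\textbf{Main obstacle.} The hard part is the comparable non-edge case: the reduction of Proposition~\ref{res-ext} lands us in a rank $\geq 2$ Levi where vanishing is not formal but requires invoking the Kazhdan--Lusztig / Jantzen structure of the relevant block. A secondary technical point is the case-by-case verification of $\{\alpha_i\}$-joyfulness for each of the eight lower endpoints, which depends on a careful Kac--Kazhdan analysis of the Verma $M(\lambda)$ at these specific affine weights.
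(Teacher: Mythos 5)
Your overall architecture (edges via a rank-one reduction, non-edges via incomparability or via Proposition~\ref{res-ext} plus vanishing in a finite-type Levi) is reasonable, and your non-edge analysis is sound in outline. But there is a genuine gap at the heart of your edge argument: you route everything through Proposition~\ref{ind-ext}, whose hypothesis is that the lower weight is $\{\alpha_i\}$-joyful, and you propose to verify joyfulness via Proposition~\ref{gen-singular} ``using the Kac--Kazhdan description of submodules of Verma modules.'' Kac--Kazhdan controls linkage and characters of affine Verma modules; it does not tell you that the maximal proper submodule $N(w\cdot(-\Lambda_4))$ of a level $-6$ Verma module for $\widehat{E_8}$ is generated by singular vectors, which is what Proposition~\ref{gen-singular} requires. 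This is precisely the point the paper goes out of its way to avoid: the text before the theorem says explicitly that ``we can use a tricky way to show that the sequence is exact,'' replacing the joyfulness hypothesis by a vertex-algebra argument. Namely, Proposition~\ref{ann} shows $\Ind_{\alpha_i,!*}(M_{\alpha_i}(\mu))$ has the same annihilator as $L(s_{\alpha_i}\cdot\mu)$ and is therefore an $L_{-6}(E_8)$-module; the classification of simple $L_{-6}(E_8)$-modules from \cite{arakawa2018joseph} then bounds its possible composition factors to the nine listed weights; an explicit computation with the Chevalley generators ($f_j.(1\otimes v)$ being a singular vector killed in $\Ind_{!*}$ by Remark~\ref{max-new}) eliminates all but two of them; and one-dimensionality of the relevant weight spaces pins down the multiplicities. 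Your proof as written does not close the joyfulness gap, and without it Proposition~\ref{ind-ext} cannot be invoked.

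A smaller but related point: for the edges you do not actually need Proposition~\ref{ind-ext} at all. Proposition~\ref{res-ext} (which needs no joyfulness) already gives the upper bound $\dim\Ext^1_{\widehat{E_8}\wtMod}\leq\dim\Ext^1_{\l_{\alpha_i}\wtMod}=1$, and the paper's explicit non-split induced sequence gives the lower bound, together with the $L_{-6}(E_8)$-module statement. Restructuring your edge case this way would remove the dependence on the unverified hypothesis and bring your argument in line with what the paper actually does; the remaining work would then be the non-edge vanishing, where your split into incomparable pairs (formal splitting) and comparable pairs (Kazhdan--Lusztig $\mu$-function vanishing in the type $A$ Levis along each arm) is a plausible way to make precise the paper's ``similar careful analysis.''
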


    We can perform similar constructions for vertex algebras $L_{-4}(E_7),L_{-3}(E_6)$ and $L_{-2}(D_4)$. They arise from rank one SCFT under the $4d/2d$ duality (cf. \cite{beem2015infinite}, \cite{shan2023mirror}). The case for $L_{-2}(D_4)$ was already considered by \cite{kawasetsu2022relaxed}, from a different point of view (Zhu's induction).
\end{example}

\begin{theorem}
    Consider the diagram
    $$\begin{tikzcd}
&                                                  &                                               & s_2 \arrow[d, bend left]                                           &                                               &                                                  &                                \\
s_0s_1s_3 \arrow[r, bend left] & s_1s_3 \arrow[r, bend left] \arrow[l, bend left] & s_3 \arrow[r, bend left] \arrow[l, bend left] & \id \arrow[r, bend left] \arrow[u, bend left] \arrow[l, bend left] & s_5 \arrow[r, bend left] \arrow[l, bend left] & s_6s_5 \arrow[r, bend left] \arrow[l, bend left] & s_7s_6s_5 \arrow[l, bend left]
\end{tikzcd}$$
Then we have
    $$\dim\Ext^1_{\widehat{E_7}\wtMod}(L(w\cdot(-\Lambda_4)),L(w'\cdot(-\Lambda_4)))=\#\text{ of arrows from }w\text{ to }w'\text{ in the above diagram}$$
    for $w,w'\in\{\id,s_2,s_3,s_1s_3,s_0s_1s_3,s_5,s_6s_5,s_7s_6s_5\}$.

    Furthermore, all nontrivial extensions are in fact extensions of $L_{-4}(E_7)$-modules.
\end{theorem}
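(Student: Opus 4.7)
The plan is to mirror the strategy used for $\widehat{E_8}$ verbatim. The $9$ weights $w\cdot(-\Lambda_4)$ appearing here are precisely the highest weights of the simple ordinary $L_{-4}(E_7)$-modules classified by \cite{arakawa2018joseph}, so we start by recording this list and computing, for each arrow $w\to w'$ in the quiver, the unique simple root $\alpha_i$ such that $w'\cdot(-\Lambda_4)=s_i\cdot(w\cdot(-\Lambda_4))$ and $w\cdot(-\Lambda_4)-w'\cdot(-\Lambda_4)\in\Z_{>0}\alpha_i$. (E.g.\ $-\Lambda_4=s_2\cdot(-\Lambda_4)+\alpha_2$, $-\Lambda_4=s_3\cdot(-\Lambda_4)+\alpha_3$, $-\Lambda_4=s_5\cdot(-\Lambda_4)+\alpha_5$, etc., ending with $s_1s_3\cdot(-\Lambda_4)=s_0s_1s_3\cdot(-\Lambda_4)+\text{(multiple of }\alpha_0)$.) For each such pair, $w'\cdot(-\Lambda_4)$ is $\rho$-anti-dominant integral for $\l_{\alpha_i}$, so $M_{\alpha_i}(w'\cdot(-\Lambda_4))=L_{\alpha_i}(w'\cdot(-\Lambda_4))$ is simple and embeds into $M_{\alpha_i}(w\cdot(-\Lambda_4))$, producing the non-split $\l_{\alpha_i}$-sequence
$$0\to L_{\alpha_i}(w'\cdot(-\Lambda_4))\to M_{\alpha_i}(w\cdot(-\Lambda_4))\to L_{\alpha_i}(w\cdot(-\Lambda_4))\to0.$$

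Applying $\Ind_{\alpha_i,!*}$ gives a sequence
$$0\to L(w'\cdot(-\Lambda_4))\to\Ind_{\alpha_i,!*}\bigl(M_{\alpha_i}(w\cdot(-\Lambda_4))\bigr)\to L(w\cdot(-\Lambda_4))\to0,$$
and I would show exactness precisely as in the $E_8$ example: using Proposition~\ref{ann} together with the Arakawa--Moreau classification for $L_{-4}(E_7)$, each composition factor of $\Ind_{\alpha_i,!*}(M_{\alpha_i}(w\cdot(-\Lambda_4)))$ is forced to be of the form $L(w''\cdot(-\Lambda_4))$ with $w''$ in our finite list, and then the ``$f_j$-annihilation'' trick (as in the $E_8$ computation: if $f_j.(1\otimes v)$ generates a module having zero intersection with $1\otimes M_{\alpha_i}(w\cdot(-\Lambda_4))$, it lies in $\J^{-1}_{\alpha_i,!}$ by Remark~\ref{max-new} and vanishes in $\Ind_{\alpha_i,!*}$) rules out every $w''$ except the two endpoints of the arrow. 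Comparing one-dimensional weight spaces then pins down multiplicities. Non-splitting follows, as before, from applying $\Res^!_{\alpha_i}$ and invoking the non-splitness of the original $\l_{\alpha_i}$-sequence. This produces the required arrow-many nonzero elements in $\Ext^1_{\widehat{E_7}\wtMod}(L(w\cdot(-\Lambda_4)),L(w'\cdot(-\Lambda_4)))$, and by Proposition~\ref{ann} they automatically live in the subcategory of $L_{-4}(E_7)$-modules.

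It remains to prove the upper bound: that no further extensions exist and that $\Ext^1$ vanishes between non-adjacent vertices of the quiver. Here I would apply Proposition~\ref{ind-ext}: for each vanishing or dimension bound to be checked, first verify that the relevant highest weight is $\alpha$-joyful (using Proposition~\ref{gen-singular}, which reduces this to singularity of the Verma's maximal submodule --- a purely Kazhdan--Lusztig-combinatorial check within the block of $-\Lambda_4$), then reduce the Ext computation over $\widehat{E_7}$ to an Ext computation over the rank-one Levi $\l_{\alpha_i}$. The Ext groups between simple highest weight modules of $\sl_2\oplus\z_{\alpha_i}$ are classical: $\dim\Ext^1_{\l_{\alpha_i}\wtMod}(L_{\alpha_i}(\mu),L_{\alpha_i}(\nu))$ is $1$ when $\{\mu,\nu\}$ form an adjacent pair in a single $\sl_2$-block and $0$ otherwise. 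Matching these with the arrows of the quiver finishes the dimension count.

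The main obstacle will be the upper bound, specifically verifying the $\alpha$-joyful hypothesis for all the weights in play so that Proposition~\ref{ind-ext} can be invoked: at non-integral or subregular affine weights the structure of $N(w\cdot(-\Lambda_4))$ is delicate, and Proposition~\ref{gen-singular} only gives joyfulness when $N(\lambda)$ is generated by its singular vectors. If this fails for some weight on the list (as can happen at negative level), then that particular vanishing would need an ad hoc argument, most naturally via the Arakawa--Moreau associated variety constraints forcing the relevant non-adjacent $\Ext^1$ to vanish within the category of $L_{-4}(E_7)$-modules and then lifting. Assuming joyfulness holds throughout (which is the generic case and is expected in this block by analogy with $E_8$), the proof is assembled exactly as outlined above.
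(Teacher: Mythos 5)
The paper gives no separate proof of this theorem: it is stated after the sentence ``We can perform similar constructions for \ldots{} $L_{-4}(E_7)$\ldots'', so the intended argument is exactly the $\widehat{E_8}$ template. Your reconstruction of the existence half matches that template faithfully: for each arrow, locate the simple root $\alpha_i$ with $w\cdot(-\Lambda_4)=w'\cdot(-\Lambda_4)+k\alpha_i$, apply $\Ind_{\alpha_i,!*}$ to the non-split rank-one Verma sequence, use Proposition~\ref{ann} plus the Arakawa--Moreau classification to constrain the composition factors, kill the unwanted singular vectors $f_j.(1\otimes v)$ via Remark~\ref{max-new}, count one-dimensional weight spaces, and detect non-splitting with $\Res^!_{\alpha_i}$. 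The identification of the middle terms as $L_{-4}(E_7)$-modules via equality of annihilators is also the paper's argument. (Minor slip: there are $8$ weights in the list, not $9$.)

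Your treatment of the upper bound has two issues. First, you route it through Proposition~\ref{ind-ext} and therefore worry at length about verifying $\Xi$-joyfulness; but the upper bound only needs the \emph{inclusion} $\Ext^1_{\g\wtMod}(L(\lambda),L(\mu))\incl\Ext^1_{\l_\Xi\wtMod}(L_\Xi(\lambda),L_\Xi(\mu))$ of Proposition~\ref{res-ext}, which carries no joyfulness hypothesis and requires only $\mu\equiv\lambda\pmod{Q_\Xi}$. So the obstacle you flag as ``the main obstacle'' is not actually in the way. Second, and this is the genuine gap: you propose to reduce every vanishing statement to the \emph{rank-one} Levi $\l_{\alpha_i}$. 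That reduction is only available when $\mu-\lambda\in\Z\alpha_i$ for a single simple root, i.e.\ for adjacent pairs (where it correctly caps the dimension at $1$) and for self-extensions. For non-adjacent but comparable pairs --- e.g.\ $\id$ and $s_6s_5$, whose weights differ by $\alpha_5+2\alpha_6$, or $\id$ and $s_1s_3$, or $s_5$ and $s_7s_6s_5$ --- no rank-one Levi applies, and you must instead invoke Proposition~\ref{res-ext} with a finite-type Levi of rank $2$ or $3$ (types $A_2$, $A_3$ along the branches $0\text{--}1\text{--}3$ and $5\text{--}6\text{--}7$) together with the known finite-type computation $\dim\Ext^1(L_w,L_{w'})=\mu(w,w')$, which vanishes for these length-two and length-three gaps; pairs lying on different branches are handled either by incomparability of the weights or by the product structure of the corresponding $A_k\times A_l$ Levi. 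None of this is hard, but as written your plan does not cover these cases, and they are precisely the zeros of the claimed extension quiver.
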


\begin{theorem}
    Consider the diagram
    $$\begin{tikzcd}
&                                               & s_0s_2 \arrow[d, bend left]                                        &                                       &                             \\
                            &                                               & s_2 \arrow[d, bend left] \arrow[u, bend left]                      &                                               &                             \\
s_1s_3 \arrow[r, bend left] & s_3 \arrow[r, bend left] \arrow[l, bend left] & \id \arrow[r, bend left] \arrow[u, bend left] \arrow[l, bend left] & s_5 \arrow[r, bend left] \arrow[l, bend left] & s_6s_5 \arrow[l, bend left]
\end{tikzcd}$$
Then we have
    $$\dim\Ext^1_{\widehat{E_6}\wtMod}(L(w\cdot(-\Lambda_4)),L(w'\cdot(-\Lambda_4)))=\#\text{ of arrows from }w\text{ to }w'\text{ in the above diagram}$$
    for $w,w'\in\{\id,s_2,s_0s_2,s_3,s_1s_3,s_5,s_6s_5\}$.

    Furthermore, all nontrivial extensions are in fact extensions of $L_{-3}(E_6)$-modules.
\end{theorem}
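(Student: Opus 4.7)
The plan is to run the same argument as in the $\widehat{E_8}$ example, adapting it to $\widehat{E_6}$ and $L_{-3}(E_6)$. First I would recall from \cite{arakawa2018joseph} the classification of simple highest weight $L_{-3}(E_6)$-modules of the appropriate central character: they are exactly the $L(w\cdot(-\Lambda_4))$ for $w$ in the list $\{\id,s_2,s_0s_2,s_3,s_1s_3,s_5,s_6s_5\}$ appearing in the diagram. This classification tells us the only possible composition factors of any $L_{-3}(E_6)$-module in this block, and it also tells us which elements $w\cdot(-\Lambda_4)$ satisfy the $\alpha$-joyfulness hypothesis for the relevant simple roots $\alpha$.

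Next, for each edge of the diagram I would construct the corresponding nontrivial extension. Each edge corresponds to a pair $w,w'$ with $w'\cdot(-\Lambda_4)=s_i\cdot(w\cdot(-\Lambda_4))$ for some simple root $\alpha_i$, where one of the two weights is $\rho$-anti-dominant for $\l_{\alpha_i}$. In that case the $\l_{\alpha_i}$-Verma module $M_{\alpha_i}(w\cdot(-\Lambda_4))$ (assuming $w\cdot(-\Lambda_4)$ is the larger weight) sits in a non-split short exact sequence
$$0\to L_{\alpha_i}(s_i\cdot(w\cdot(-\Lambda_4)))\to M_{\alpha_i}(w\cdot(-\Lambda_4))\to L_{\alpha_i}(w\cdot(-\Lambda_4))\to0.$$
Applying the exact functor $\Ind_{\alpha_i,!*}$ of Proposition~\ref{ind-exact}, whose hypotheses are met because the weight is $\alpha_i$-joyful and the difference lies outside $\Z_{\geq 0}\alpha_i$, yields a non-split short exact sequence of $\g$-modules with terms $L(s_i\cdot(w\cdot(-\Lambda_4)))$, $\Ind_{\alpha_i,!*}(M_{\alpha_i}(w\cdot(-\Lambda_4)))$, $L(w\cdot(-\Lambda_4))$. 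Proposition~\ref{ind-ext} then gives a one-dimensional $\Ext^1$-group between the corresponding simples, accounting for one arrow of the diagram.

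The remaining task is to show that the diagram is complete, i.e. that $\Ext^1$ vanishes for all non-adjacent pairs $w,w'$. For this I would argue exactly as in the $\widehat{E_8}$ example: apply the minimal parabolic induction argument to identify the composition factors of $\Ind_{\alpha_i,!*}(M_{\alpha_i}(w\cdot(-\Lambda_4)))$ using the $\widehat{E_6}$-analogue of the Chevalley computation (checking $e_jf_k.\bar v=0$ so that $f_k.\bar v$ would be a highest weight vector generating a submodule of $\J^{-1}_{\alpha_i,!}$, and hence is zero in $\Ind_{\alpha_i,!*}$). This forces each such induced module to have only two composition factors, ruling out other possible extensions. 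Weight-space dimension count on $U\u^-_{\alpha_i}\otimes M_{\alpha_i}$ at the relevant weights, each being one-dimensional, then ensures each simple appears at most once.

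Finally, to get the $L_{-3}(E_6)$-module statement I would invoke Proposition~\ref{ann}: the equality $\Ann_{U\widehat{E_6}}(L(s_i\cdot(w\cdot(-\Lambda_4))))=\Ann_{U\widehat{E_6}}(\Ind_{\alpha_i,!*}(M_{\alpha_i}(w\cdot(-\Lambda_4))))$ shows that the ideal of $U\widehat{E_6}$ corresponding to the vertex ideal defining $L_{-3}(E_6)$ continues to annihilate the induced module, so the constructed extensions are genuinely $L_{-3}(E_6)$-module extensions. The main obstacle I expect is purely organizational: the $L_{-3}(E_6)$-analogue of the classification in \cite{arakawa2018joseph} must be available and the $\alpha$-joyfulness of each weight in the list must be checked case by case, which is essentially the $\widehat{E_8}$ bookkeeping repeated in $\widehat{E_6}$ with its smaller Dynkin diagram; the representation-theoretic heavy lifting has already been done in the general machinery.
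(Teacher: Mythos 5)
Your overall strategy is the one the paper intends: the paper proves the $\widehat{E_8}$ case in detail and then asserts the $E_6$, $E_7$, $D_4$ cases follow ``by similar careful analysis,'' so replicating the $\widehat{E_8}$ argument (classification from \cite{arakawa2018joseph}, the Chevalley computation $e_jf_k.(1\otimes v)=0$ forcing $f_k.(1\otimes v)\in\J^{-1}_{\alpha_i,!}$, the one-dimensional weight-space count, and Proposition~\ref{ann} for the vertex-algebra statement) is exactly right. Two small slips: $\Ind_{\alpha_i,!*}$ is not an exact functor (Remark~\ref{exactness-minimal}); Lemma~\ref{ind-exact} only guarantees exactness on the specific sequences you feed it, under the joyfulness hypothesis. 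Your step constructing the edge extensions is fine either via that lemma (if joyfulness is verified) or via the paper's ``tricky'' route through the $L_{-3}(E_6)$-module structure, which sidesteps joyfulness entirely.

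There is, however, one genuine gap in your argument for completeness of the diagram. Knowing that each $\Ind_{\alpha_i,!*}(M_{\alpha_i}(w\cdot(-\Lambda_4)))$ has exactly two composition factors tells you those particular modules realize no further extensions; it does not prove that $\Ext^1_{\widehat{E_6}\wtMod}(L(w\cdot(-\Lambda_4)),L(w'\cdot(-\Lambda_4)))$ vanishes for non-adjacent $w,w'$, since an arbitrary extension need not embed in one of these induced modules. You need a separate upper-bound argument. For pairs with incomparable weights (e.g.\ $s_3$ versus $s_5$, whose difference is $\alpha_5-\alpha_3$) one uses the standard observation that a non-split extension of $L(\lambda)$ by $L(\mu)$ with $\lambda\neq\mu$ forces $\lambda$ and $\mu$ to be comparable; the diagonal is handled by Proposition~\ref{res-ext} with $\Xi=\emptyset$. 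But the comparable distance-two pairs --- $\id$ against $s_1s_3$, $s_6s_5$ and $s_0s_2$, whose weight differences are $2\alpha_1+\alpha_3$, $2\alpha_6+\alpha_5$ and $\alpha_2+k\alpha_0$ --- are not disposed of by either device. For these you must invoke Proposition~\ref{res-ext} with the rank-two Levi $\Xi=\{\alpha_1,\alpha_3\}$, $\{\alpha_5,\alpha_6\}$, $\{\alpha_0,\alpha_2\}$ (each of type $A_2$, with $\mu-\lambda\in Q_\Xi$) and then quote the known vanishing of $\Ext^1$ between $L_\Xi(e\cdot\lambda)$ and $L_\Xi(st\cdot\lambda)$ in a regular block for $A_2$. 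Likewise, the upper bound $\dim\Ext^1\leq 1$ for adjacent pairs comes from Proposition~\ref{res-ext} with the single-root Levi and the $\sl_2$ computation, not from the induced modules. With these reductions added, your outline closes up.
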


\begin{theorem}
    Consider the diagram
    $$\begin{tikzcd}
& s_0 \arrow[d, bend left]                                                                &                          \\
s_3 \arrow[r, bend left] & \id \arrow[r, bend left] \arrow[u, bend left] \arrow[l, bend left] \arrow[d, bend left] & s_4 \arrow[l, bend left] \\
                         & s_1 \arrow[u, bend left]                                                                &                         
\end{tikzcd}$$
Then we have
$$\dim\Ext^1_{\widehat{D_4}\wtMod}(L(w\cdot(-\Lambda_2)),L(w'\cdot(-\Lambda_2)))=\#\text{ of arrows from }w\text{ to }w'\text{ in the above diagram}$$
    for $w,w'\in\{\id,s_0,s_1,s_3,s_4\}$.

    Furthermore, all nontrivial extensions are in fact extensions of $L_{-2}(D_4)$-modules.
\end{theorem}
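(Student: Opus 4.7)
The plan is to mimic, step for step, the $\widehat{E_8}$ example above. First observe that $\langle -\Lambda_2+\rho, \alpha_i^\vee\rangle = 1$ for $i \in \{0,1,3,4\}$, so $s_i\cdot(-\Lambda_2) = -\Lambda_2 - \alpha_i$, while $\langle -\Lambda_2+\rho, \alpha_2^\vee\rangle = 0$, so $s_2$ fixes $-\Lambda_2$. This explains why the vertex set of the quiver is precisely $\{\id, s_0, s_1, s_3, s_4\}$.

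For each $i \in \{0,1,3,4\}$ I would construct the nontrivial extension between $L(-\Lambda_2)$ and $L(s_i\cdot(-\Lambda_2))$ by applying $\Ind_{\alpha_i,!*}$ to the non-split $\l_{\alpha_i}$-sequence
$$0 \to L_{\alpha_i}(s_i\cdot(-\Lambda_2)) \to M_{\alpha_i}(-\Lambda_2) \to L_{\alpha_i}(-\Lambda_2) \to 0,$$
yielding a candidate sequence
$$0 \to L(s_i\cdot(-\Lambda_2)) \to \Ind_{\alpha_i,!*}(M_{\alpha_i}(-\Lambda_2)) \to L(-\Lambda_2) \to 0$$
that is left- and right-exact by Proposition~\ref{inj-surj} and Corollary~\ref{minimal-maps-hwsimple}. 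For exactness in the middle, I would argue as in the $\widehat{E_8}$ case: Proposition~\ref{ann} identifies the annihilator of the middle term with that of the simple $L(s_i\cdot(-\Lambda_2))$, which by the classification of \cite{arakawa2018joseph} is an $L_{-2}(D_4)$-module; hence the middle term is itself an $L_{-2}(D_4)$-module, so its composition factors lie among the $L(w\cdot(-\Lambda_2))$ for $w\in\{\id,s_0,s_1,s_3,s_4\}$. For each $j\in\{0,1,3,4\}\setminus\{i\}$, Chevalley-relation calculations give $e_k f_j.(1\otimes v_{-\Lambda_2})=0$ for all $k$, so $f_j.(1\otimes v_{-\Lambda_2})$ is a singular vector in $\Ind_{\alpha_i,!}(M_{\alpha_i}(-\Lambda_2))$ generating a submodule disjoint from $1\otimes M_{\alpha_i}(-\Lambda_2)$; by Remark~\ref{max-new} this submodule lies in $\J^{-1}_{\alpha_i,!}(M_{\alpha_i}(-\Lambda_2))$, ruling out $L(s_j\cdot(-\Lambda_2))$ as a composition factor. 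Weight-space dimension counts then force exactly one copy each of $L(-\Lambda_2)$ and $L(s_i\cdot(-\Lambda_2))$. Non-splitness follows by applying $\Res^!_{\alpha_i}$, which recovers the original non-split $\l_{\alpha_i}$-sequence.

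To show the quiver captures \emph{all} nonzero $\Ext^1$ groups, I would use Proposition~\ref{res-ext} for sharp upper bounds. For the pair $(\id, s_i)$ (and dually $(s_i, \id)$), take $\Xi=\{\alpha_i\}$: the right-hand side is the $1$-dimensional $\sl_2$-Ext of a non-split Verma/coVerma pair, matching the construction. For $(s_i, s_j)$ with $i\neq j$ in $\{0,1,3,4\}$, take $\Xi=\{\alpha_i,\alpha_j\}$: because $\alpha_i$ and $\alpha_j$ are not connected in the affine Dynkin diagram of $D_4^{(1)}$, $\l_\Xi \simeq \sl_2\oplus\sl_2\oplus\z$ and the two simples are tensor products of distinct $\sl_2$-simples, so $\Ext^1_{\l_\Xi\wtMod}$ vanishes by a direct splitting argument on weight-space supports. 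For self-extensions, the same proposition with $\Xi=\{\alpha_2\}$ (for $w=\id$) or $\Xi=\{\alpha_i\}$ (for $w=s_i$) bounds $\Ext^1_{\widehat{D_4}\wtMod}(L,L)$ by $\Ext^1_{\l_\Xi\wtMod}(L_\Xi,L_\Xi)$, which vanishes since any lift of the highest weight vector remains singular and therefore produces a splitting copy of $L_\Xi$. The restricted-duality functor of Section~\ref{km-weight} matches Ext in opposite directions, completing the determination of the quiver. The final VOA claim follows from the annihilator computation above, since each nontrivial extension's middle term is an $L_{-2}(D_4)$-module.

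The main obstacle is the exactness step: it leans on both the classification of simple $L_{-2}(D_4)$-modules in \cite{arakawa2018joseph} to cut down the list of possible composition factors, and a case-by-case Chevalley-relation verification for each $j\neq i$ to extract enough singular vectors. Everything else is a streamlined application of Propositions~\ref{ann} and \ref{res-ext} together with the restricted-duality machinery developed earlier in the paper.
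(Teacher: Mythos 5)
Your proposal is correct and follows essentially the same route as the paper, which works out only the $\widehat{E_8}$ case in detail and defers $\widehat{D_4}$ to ``similar careful analysis'': you build the extensions by applying $\Ind_{\alpha_i,!*}$ to the non-split $\l_{\alpha_i}$-sequence, prove exactness in the middle via the annihilator identification, the classification of simple $L_{-2}(D_4)$-modules, the singular vectors $f_j.(1\otimes v_{-\Lambda_2})$, and weight-space counts, and then match these lower bounds with the upper bounds from Proposition~\ref{res-ext} for suitably chosen $\Xi$. The one step stated a bit loosely is the vanishing of $\Ext^1_{\l_\Xi\wtMod}$ for $\Xi=\{\alpha_i,\alpha_j\}$, where disjointness of the two supports alone does not immediately split an extension; but the standard fix (a weight-$(-\Lambda_2-\alpha_i)$ lift of the highest weight vector is automatically singular, hence generates a quotient of $M_\Xi(s_i\cdot(-\Lambda_2))$ having no composition factor isomorphic to $L_\Xi(s_j\cdot(-\Lambda_2))$) closes this gap.
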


\bibliographystyle{alpha}
\bibliography{bib}

\newcommand{\etalchar}[1]{$^{#1}$}
\begin{thebibliography}{BLL{\etalchar{+}}15}

\bibitem[ACK23]{arakawa2023weight}
Tomoyuki Arakawa, Thomas Creutzig, and Kazuya Kawasetsu.
\newblock {Weight representations of affine Kac--Moody algebras and small
  quantum groups}.
\newblock {\em arXiv preprint arXiv:2311.10233}, 2023.

\bibitem[AM18]{arakawa2018joseph}
Tomoyuki Arakawa and Anne Moreau.
\newblock Joseph ideals and lisse minimal {$W$}-algebras.
\newblock {\em J. Inst. Math. Jussieu}, 17(2):397--417, 2018.

\bibitem[BB81]{beilinson1981localisation}
Alexandre Beilinson and Joseph Bernstein.
\newblock Localisation de {$\mathfrak{g}$}-modules.
\newblock {\em C. R. Acad. Sci. Paris S\'er. I Math.}, 292(1):15--18, 1981.

\bibitem[BB93]{beilinson1993proof}
A.~Beilinson and J.~Bernstein.
\newblock A proof of {J}antzen conjectures.
\newblock In {\em I. {M}. {G}el'fand {S}eminar}, volume 16, Part 1 of {\em Adv.
  Soviet Math.}, pages 1--50. Amer. Math. Soc., Providence, RI, 1993.

\bibitem[BBD82]{beilinson1982faisceaux}
A.~A. Beilinson, J.~Bernstein, and P.~Deligne.
\newblock Faisceaux pervers.
\newblock In {\em Analysis and topology on singular spaces, {I} ({L}uminy,
  1981)}, volume 100 of {\em Ast\'erisque}, pages 5--171. Soc. Math. France,
  Paris, 1982.

\bibitem[BLL{\etalchar{+}}15]{beem2015infinite}
Christopher Beem, Madalena Lemos, Pedro Liendo, Wolfger Peelaers, Leonardo
  Rastelli, and Balt~C. van Rees.
\newblock Infinite chiral symmetry in four dimensions.
\newblock {\em Comm. Math. Phys.}, 336(3):1359--1433, 2015.

\bibitem[Bou02]{bourbakigroupes}
Nicolas Bourbaki.
\newblock {\em Lie groups and {L}ie algebras. {C}hapters 4--6}.
\newblock Elements of Mathematics (Berlin). Springer-Verlag, Berlin, 2002.
\newblock Translated from the 1968 French original by Andrew Pressley.

\bibitem[CD21]{campbell2021affine}
Justin Campbell and Gurbir Dhillon.
\newblock {Affine Harish-Chandra bimodules and Steinberg--Whittaker
  localization}.
\newblock {\em arXiv preprint arXiv:2108.02806}, 2021.

\bibitem[Del80]{deligne1980conjecture}
Pierre Deligne.
\newblock La conjecture de {W}eil. {II}.
\newblock {\em Inst. Hautes \'Etudes Sci. Publ. Math.}, (52):137--252, 1980.

\bibitem[Duf75]{duflo1973construction}
Michel Duflo.
\newblock Construction of primitive ideals in an enveloping algebra.
\newblock In {\em Lie groups and their representations ({P}roc. {S}ummer
  {S}chool, {B}olyai {J}\'anos {M}ath. {S}oc., {B}udapest, 1971)}, pages
  77--93. Halsted Press, New York-Toronto, Ont., 1975.

\bibitem[Hum08]{humphreys2008representations}
James~E. Humphreys.
\newblock {\em Representations of semisimple {L}ie algebras in the {BGG}
  category {$\mathcal{O}$}}, volume~94 of {\em Graduate Studies in
  Mathematics}.
\newblock American Mathematical Society, Providence, RI, 2008.

\bibitem[Kac90]{kac1990infinite}
Victor~G. Kac.
\newblock {\em Infinite-dimensional {L}ie algebras}.
\newblock Cambridge University Press, Cambridge, third edition, 1990.

\bibitem[KR22]{kawasetsu2022relaxed}
Kazuya Kawasetsu and David Ridout.
\newblock Relaxed highest-weight modules {II}: {C}lassifications for affine
  vertex algebras.
\newblock {\em Commun. Contemp. Math.}, 24(5):Paper No. 2150037, 43, 2022.

\bibitem[SXY23]{shan2023mirror}
Peng Shan, Dan Xie, and Wenbin Yan.
\newblock {Mirror symmetry for circle compactified $4d$ $\mathcal{N}=2$ SCFTs}.
\newblock {\em arXiv preprint arXiv:2306.15214}, 2023.

\bibitem[Vog80]{vogan1980ordering}
David~A. Vogan, Jr.
\newblock Ordering of the primitive spectrum of a semisimple {L}ie algebra.
\newblock {\em Math. Ann.}, 248(3):195--203, 1980.

\end{thebibliography}

\end{document}